\DeclareSymbolFontAlphabet{\amsmathbb}{AMSb}%
\newcommand{\pr}{{\amsmathbb{P}}}
\newcommand{\E}{\amsmathbb{E}}
\newcommand*\dd{\mathop{}\!\mathrm{d}}
\newcommand\independent{\protect\mathpalette{\protect\independenT}{\perp}}
\def\independenT#1#2{\mathrel{\rlap{$#1#2$}\mkern2mu{#1#2}}}
\numberwithin{equation}{section}
\theoremstyle{plain}
\newtheorem{theorem}{Theorem}
\newtheorem{lemma}[theorem]{Lemma}
\newtheorem{definition}{Definition}
\newtheorem{remark}[theorem]{Remark}
\newtheorem{condition}{Condition}
\begin{document}

\begin{frontmatter}
\title{Heterogeneous extremes in the presence of random covariates and censoring
\support{The work presented here is supported by the Carlsberg Foundation, grant CF23-1096.}
}
\runtitle{Heterogeneous extremes in the presence of random covariates and censoring}

\begin{aug}

\author{\fnms{Martin} \snm{Bladt}\ead[label=e1]{martinbladt@math.ku.dk}}
\address{Department of Mathematical Sciences, University of Copenhagen, Denmark\\
\printead{e1}}

\author{\fnms{Christoffer} \snm{Øhlenschlæger}
\ead[label=e2]{choh@math.ku.dk}}
\address{Department of Mathematical Sciences, University of Copenhagen, Denmark\\
\printead{e2}}

\end{aug}

\begin{abstract}
The task of analyzing extreme events with censoring effects is considered under a framework allowing for random covariate information. A wide class of estimators that can be cast as product--limit integrals is considered, for when the conditional distributions belong to the Fréchet max-domain of attraction. The main mathematical contribution is establishing uniform conditions on the families of the regularly varying tails for which the asymptotic behaviour of the resulting estimators is tractable. In particular, a decomposition of the integral estimators in terms of exchangeable sums is provided, which leads to a law of large numbers and several central limit theorems. Subsequently, the finite-sample behaviour of the estimators is explored through a simulation study, and through the analysis of two real-life datasets. In particular, the inclusion of covariates makes the model significantly versatile and, as a consequence, practically relevant.

\end{abstract}

\begin{keyword}[class=MSC]
\kwd[Primary ]{62G32}
\kwd[; secondary ]{62N02}
\kwd{62E20}
\end{keyword}

\begin{keyword}
\kwd{extreme values}
\kwd{survival analysis}
\kwd{random covariates}
\kwd{exchangeable decomposition}
\kwd{Potter bounds}
\end{keyword}
\tableofcontents
\end{frontmatter}

\section{Introduction}

There are several applications where data containing extreme values and censoring come into play. For instance, observations of survival times in medical studies are naturally right-censored and cured patients may live much longer than the average. Sometimes the latter effect can be so severe that the lifetime distributions of the cohort may adequately be modeled by a heavy-tailed distribution. Other examples arise when analysing catastrophic events, e.g. insurance claim sizes in certain lines of business may be very large and paid out throughout several years, incurring in incomplete information. A common feature of these type of studies is the presence of covariates, which are commonly the center of interest when drawing conclusions, for instance to quantify treatment effect differences on subpopulations, or determining risk classes within an insurance portfolio.

This paper is devoted to building the mathematical foundations for statistical inference of data exhibiting heavy-tails, censoring and covariates. To handle this problem, we employ an idea from \cite{EKM}, which analyses estimators derived as integrals with respect to the product-limit estimator (cf. \cite{kaplan1958nonparametric}) at normalized upper order statistics; however, we simultaneously incorporate the idea of \cite{stute-co}, where the inclusion of random covariates in handled elegantly in a censoring context. The extension is relevant on two fronts.
First, the incorporation of covariates greatly widens the practical relevance of models for heavy-tailed and right-censored data, enabling the use of a larger and more versatile group of estimators. Secondly, from a mathematical perspective there are several hurdles to overcome, which necessitates the formulation of appropriate and lax conditions on the distribution of the data and estimators which allows for tractable asymptotic behaviour. These conditions, in turn, lead to interesting features that are not present in the non-extreme counterparts in \cite{stute-co}. For instance, the covariate distribution changes support in the limit, concentrating on covariates giving rise to the most catastrophic events, and the original distribution is re-weighted according to the regularly-varying components of the conditional distributions. In applications, these features can be used asses not only how extreme events occur, but also which covariates are the drivers of such events.

The asymptotic properties of the product-limit estimator are by now well understood, with the main difficulty being dealing with maximal intervals. Thus, when considering integrals with respect to this estimator, the U-statistics approach of \cite{stute} is an elegant way of dealing with those issues, avoiding the use and complications that empirical process theory brings in this specific context. Here, a decomposition of the integral into sums (plus a remainder term) is crucial. In \cite{EKM} the Kaplan--Meier integral in without covariates is studied in detail, where a similar decomposition of the integral is established, but where the terms are no longer independent, but only exchangeable. In addition, central limit theorems are proven, in essence establishing conditions under which the dependence arising from the exchangeability asymptotically disappears. 

This paper extends the results from \cite{EKM} in the Fréchet max-domain of attraction and includes covariates. The main result is likewise a decomposition of extreme Kaplan--Meier integrals into exchangeable sums, and we establish that the remainder term again vanishes. However conceptually simple, the inclusion of covariates requires extending some fundamental results and bounds of univariate extreme value theory. Among others, we highlight the extension of the well-known Karamata representation and Potter's bounds to a uniform context.  Furthermore, we provide suitable uniform-type conditions under which we may establish that the dependence of the exchangeable sums disappears in the limit, allowing us to show consistency and three central limit theorems. As a by-product, we study the distribution of the covariates that lead to tail events, which is of importance for the limiting distribution in the CLT, but is also an interesting result on its own. Apart from technicalities, the general structure of the proofs follows \cite{EKM,stute,stute-co,segers}, with general facts and notation from extreme value theory following \cite{dehaan}. 

Other works have considered related estimators. In the context of censored extremes, the literature has grown in recent years. The extreme value index under random censoring was treated in \cite{beirlant2007estimation} and \cite{einmahl2008statistics} from basic principles, and through trimming in \cite{bladt2021trimmed}, while functional forms involving the product-limit estimator were proposed in \cite{worms2014new} and fully investigated in \cite{bww}; the extension to integral estimators was then treated in full generality in  \cite{EKM}. Some tangential considerations such as dependent censoring and truncation were treated in \cite{stupfler2019study} and \cite{gardes2015estimating}, respectively. Conditional tail index estimation was studied has been studied before, for instance through a parametric approach in \cite{index_regression} or through smoothing in \cite{gardes2014estimation}, and extended to right-censored settings in \cite{stupfler2016estimating}; however, these works concentrate onnly on the tail index and on estimating conditional distributions, and do not investigate or take into account other estimators, the evolution of the covariate distribution, nor the impact of said evolution on the proposed estimators. Classical extreme value theory was extended using sequential empirical processes in \cite{einmahl2016statistics} to non-identically distributed samples, without covariates, where each datapoint has a different contribution to the asymptotic behaviour. The proportionality function there plays a similar role as the re-weighting that happens to the asymptotic covariate distribution in our limit theorems. In \cite{de2021trends} the extreme value index is allowed to change continuously, allowing for the detection of trends, or deviations from constant extreme value indices, where the indexing conceptually plays the role of covariates. Finally, it is worth mentioning that \cite{EKM} also considered other max-domains of attraction, with a prime application being the analysis of the moment estimator of \cite{moment} adapted to censoring. Incorporating covariates in these lighter-tailed settings is a promising avenue of future research.

The paper is organized as follows. In Section \ref{sec:pre} we describe the setting and define necessary notation. The main results of the extreme Kaplan--Meier integrals re presented in Section \ref{sec:main}: the decomposition, consistency, and central limit theorems. Simulations studies based on a Burr model are presented in Section \ref{sec:application}, and we further showcase the utility of the estimators on real-life datasets from the medical and insurance sectors in Section \ref{sec:data}. 
Finally, Section \ref{sec:conc} concludes. Necessary groundwork for uniform-type results in univariate extremes are provided in Appendix \ref{sec:potter} and  \ref{sec:segers}. The proofs of the main results are delegated to Appendix \ref{sec:proofs-main} and \ref{sec:proof-X}. Appendix \ref{sec:Burr} shows that the Burr model used in Section \ref{sec:application} satisfies the conditions of Section \ref{sec:main}.

\section{Preliminaries}
\label{sec:pre}
\subsection{Notation and Setting}

Let $(Y,X,C)$ be a random vector on the probability space $(\Omega,\pr,\mathcal{F})$. Here, $Y\in \amsmathbb{R}$ is a variable of interest and $X\in\mathcal{X}\subset \amsmathbb{R}^m$ is a vector of covariates. We assume right-censoring, so we only observe
\begin{align*}
    X,\quad Z=\min\{Y,C\}\quad \text{and} \quad \delta=1\{Y\leq C\}
\end{align*}
where $C$ is understood as a censoring mechanism. Let $(Z_1,\delta_1,X_1),\ldots, (Z_n,\delta_n,X_n)$ be $n$ independent samples of $(Z,\delta,X)$. The dependence between the different random components requires some structure if one is to recover the behaviour of $(Y,X)$ from the observed sample. For instance, \cite{stute-co} studied the full distribution of $(Y,X)$, where the assumption
\begin{align*}
\pr(Y\le C| X,Y)=\pr(Y\le C| X)\quad \text{and}\quad Y\independent C
\end{align*}
is crucial to identify the model. Here, we instead adopt the assumption
\begin{align}\label{dependence_assumption}
\pr(Y\le C| X,Y)=\pr(Y\le C| X) \quad \text{and}\quad (Y,X)\independent C
\end{align}
which also allows for arbitrary dependence between $X$ and $Y$, though the censoring mechanism $C$ cannot depend on $X$. That being said, we are interested in the tail behaviour of $Y$, so that independence relation \eqref{dependence_assumption} can be relaxed to the equivalent tail-independence relation. One possible consequence is that $C$ is allowed to depend on the covariates $X$ in the body of the distribution -- though still not in the tail. An avenue to include the dependence, and conditions on the rates of asymptotic independence, is through a copula construction as in \cite{stupfler2019study}. To avoid notational overhead, and to streamline proofs, we stick with \eqref{dependence_assumption}.

The next step is to construct an estimator which can overcome the censoring effects while still concentrating on the tail. Since we consider random covariates, it is natural to consider an estimator inspired by \cite{EKM,stute-co} and given by
\begin{align}\label{def:EKM}
    \amsmathbb{F}_{k,n}(x,y)=\sum_{i=1}^kW_{ik}\,1\{X_{[n-i+1,n]}\leq x, Z_{n-i+1,n}/Z_{n-k,n}\leq y\}
\end{align}
where the product weights are given by
\begin{align*}
    W_{ik}=\frac{\delta_{[n-i+1:n]}}{i}\prod_{j=i+1}^{k}\Big[\frac{j-1}{j}\Big]^{\delta_{[n-j+1:n]}}
\end{align*}
and $Z_{1:n}\leq Z_{2:n}\leq \ldots \leq Z_{n:n}$ are the order statistics of the $(Z_i)$ and $\delta_{[i:n]}$ and $X_{[i:n]}$ are the associated concomitants, that is, the $\delta$ and $X$ values associated with $Z_{i:n}$. The value $k=k_n$ is an intermediate sequence which satisfies
\begin{align}\label{speedk}
k\to\infty  \quad \text{and}\quad\frac{k}{n}\to 0, \quad \text{as}\quad n\to\infty.
\end{align}
This assumption is standard when focusing on the tail of a distribution, and states that the number of top order statistics that we use to construct our estimator becomes large, though their relative proportion with respect to the sample vanishes. The latter guarantees that we concentrate on the tail behaviour.

As noted in \cite{stute}, it turns out that recovering the distributional properties of the target variables from censored data requires expressing product-limit estimators in terms of simpler components. For instance, when covariates are present, \cite{stute-co} used the following functions to express the integral of a generic function $\varphi$ with respect to the product limit estimator:
\begin{align*}
     \gamma_0(y)&=\exp\Big\{\int_0^{y-}\frac{H^0(\dd z)}{1-H(z)}\Big\} \\
     \gamma_1(y)&=\frac{1}{1-H(y)}\int 1_{y<w}\varphi(x,w)\gamma_0(w)H^{11}(\dd x,\dd w) \\
     \gamma_2(y)&=\int \int \frac{1_{v<y,v,w}\varphi(x,w)\gamma_0(w)}{(1-H(v))^2}H^0(\dd v)H^{11}(\dd x,\dd w)
\end{align*}
where 
 \begin{align*}
     &H(z)=\pr(Z\leq z),\quad H^{11}(x,y)=\pr(X\leq x, Z\leq y, \delta=1)\\
     & H^0(z)=\pr(Z\leq z, \delta=0).
 \end{align*}
A modification of these functions to the tail setting is introduced below, which turns out to be the precise adaptation required to study the asymptotic behaviour of \eqref{def:EKM}. It must also be noted that when studying integrals of functions $\varphi$ with respect to \eqref{def:EKM}, the limit result depends on how $k$ is chosen. In the simulations of Section \ref{sec:application} we describe the situation in further detail. 

\subsection{Extreme value theory}
We consider the case when both $Y$ and $C$ are in the Fr{é}chet max-domain of attraction, that is, when their tails $1-F_Y$ and $1-G$ are roughly polynomially decaying. In \cite{EKM}, all max-domains of attraction were considered when covariates are not present, though the treatment of the Weibull and Gumbel max-domains is particularly complicated and assumption-heavy. Since the introduction of covariates brings along its own set of obstacles and assumptions, we restrict ourselves to studying arguably the most important case: regularly varying tails.

More precisely, let 
\begin{align*}
\gamma_G>0\quad \text{and}\quad \gamma_F(x)\in [\gamma_F^L,\gamma_F^U]
\end{align*}
where $0<\gamma_F^L<\gamma_F^U<\infty$. We assume that for $x\in\mathcal{X}$
\begin{align*}
F_{Y|X=x}(y)&=\pr(Y\le y| X=x)=L_{Y|X=x}(y)\,y^{-1/\gamma_F(x)}\\
G(y)&=\pr(C\le y)=L_C(y)\,y^{-1/\gamma_G}
\end{align*} 
where $L_{Y|X=x}$ and $L_C$ are slowly varying functions at infinity\footnote{A function $\ell$ is slowly varying at infinity if for any $t>1$ we have $\lim_{x\to\infty} \ell(tx)/\ell(x)=1$.}. This allows for heterogeneity in the extreme values of $Y$. We again highlight that the heterogeneity of $C$ can be part of the model under tail independence in \eqref{dependence_assumption} in place of usual independence, though even in that setting the tail index $\gamma_G$ of the censoring mechanism would still not depend on $x$. The independence of $Y|X$ and $C$ implies that $Z$ given $X$ is also regularly varying with tail index $\gamma_H(x)$ where
\begin{align*}
    1/\gamma_H(x)=1/\gamma_F(x)+1/\gamma_G.
\end{align*}

Concerning the covariates, and since we have left the distribution of the vector $(Y, X)$ unspecified, it is important to study how their distribution evolves as we sample from large values of (the censored version of) $Y$. For this purpose, define
\begin{align*}
    F_X^t(x)=\pr(X\le x|Z>t).
\end{align*}

 Further, we define $F^t_{Y|X=x}$ and $G^t_Y$ as:
\begin{align*}
    F^t_{Y|X=x}(y)&=\pr(Y\leq yt|Y>t,X=x)=\frac{F_{Y|X=x}(yt)-F_{Y|X=x}(t)}{1-F_{Y|X=x}(t)}, \quad y\geq 1, \\
    G^t(y)&=\pr(C\leq yt|C>t)=\frac{G(yt)-G(t)}{1-G(t)}, \quad y\geq 1.
\end{align*}
As $F_{Y|X=x}$ and $G$ belong to the Fréchet max-domain of attraction, we have that $F_{Y|X=x}^t$ and $G^t$  converge to the Pareto distribution with respective tail indices $\gamma_F(x)$ and $\gamma_G$. Generally, we call $F_R^t(x)$ the \textit{tail counterpart} of $F_R$, and $U_{F_R}(x)=\inf\{y:F_R(y)\ge 1-1/x\}$ the \textit{tail quantile function}. 

We are now in a position to state the required adaptation of the $\gamma_1,\gamma_2,\gamma_3$ functions from the previous subsection. Namely, let $\gamma_1^t, \gamma_1^t$ and $\gamma_2^t$ denote the respective functions where $H$, $H^0$ and $H^{11}$ are simply replaced by their tail counterparts $H^t$, $H^{0,t}$ and $H^{11,t}$.

Any regularly varying function can be expressed in terms of a so-called Karamata representation, and thus the same is true for $F_{Y|X=x}$ for a single $x$. The same is true for the tail quantile functions $U_{F_{Y|X=x}}$. We now define a uniform Karamata representation for a family of regularly varying functions, which reduces to the non-uniform version when the family consists of one member.

\begin{definition}[Uniform Karamata representation]
    Let $h(x,y):\mathcal{X}\times \amsmathbb{R}\to\amsmathbb{R}$ be regularly varying at infinity in $y$ for any fixed $x$. We say that $h$ has a uniform Karamata representation if  
    \begin{align*}
        h(x,y)=c(x,y)\exp\left( \int_1^x \delta(x,u)/u du \right), \quad x\geq 1
    \end{align*}
where $c$ and $\delta$ are measurable functions satisfying, as $y\rightarrow \infty$, $c(x,y)\rightarrow c(x)$ uniformly and $\delta(x,y)\rightarrow \delta(x)$ uniformly.
If this representation holds, we say that $h(x,y)$ can be uniformly decomposed. 
    \label{def:decompose}
\end{definition}

\section{Main results}
\label{sec:main}

This section provides the main theoretical results concerning integrals of functions $\varphi$ with respect to the estimator \ref{def:EKM}. The main result is the decomposition of Theorem \ref{thm:decomposition}, which then allows to establish consistency and normality under different conditions. Naturally, the stronger the conditions, the more tractable the weak convergence results become. It must be noted, however, that the inclusion of covariates requires the strengthening of many extreme value theory results, uniformly across covariates. Notable examples are the uniform Potter bounds of Appendix \ref{sec:potter} and uniform Segers bounds of Appendix \ref{sec:segers}.

Concretely we define the Extreme Kaplan--Meier integral as
\begin{align}
    S_{k,n}(\varphi)=\sum_{i=1}^kW_{ik} \varphi(X_{[n-i+1,n]},Z_{n-i,n})=\int \varphi \dd \amsmathbb{F}_{k,n}
\end{align}
where $\varphi: \amsmathbb{R}^{m+1}\rightarrow \amsmathbb{R}$ is a measurable function. To ensure that integrating $\varphi$ is possible, we introduce the following definition and condition.
\begin{definition}[Uniform envelope]
    A continuous and monotone function $\hat{\varphi}$ on $[1,\infty]$ such that $|\varphi(x,w)|\leq \hat{\varphi}(w)$ is called an uniform envelope of $\varphi$.
\end{definition}

\begin{condition}[Finite moment]
    There exists an envelope $\hat{\varphi}$ of $\varphi$ such that
    \begin{align*}
        \int_1^{\infty} \hat{\varphi}(w)^{2+\varepsilon}w^{\alpha(\epsilon)}\dd w<\infty
    \end{align*}
    where $\alpha(\varepsilon)=1/\gamma_G-1/\gamma_F^U-1+\varepsilon$.
    \label{cond-full}
\end{condition}
Our results can be established under slightly laxer conditions than Condition \ref{cond-full}, though the above is rather easy to verify. Thus, the use of envelopes could in principle be foregone, but at the cost of clarity and applicability.

For technical reasons, we assume the underlying probability space is rich enough to support the following random vectors
\begin{align*}
(X^*, Y^*/t, C^*/t)\sim\mathcal{L}\left( X, Y/t, C/t |Z>t\right).
\end{align*}
Although we have suppressed the dependence in $t$, note that the marginal distributions of the above quantities are related to the notion of tail counterpart as follows: $X^*\sim F_{X}^t$, $Y^*/t\sim F^t_Y$ and $C^*/t \sim G^t$. Then let 
\begin{align*}
\big((X_{[1,k]},Y^*_{1,k}/t, C^*_{1,k}/t), \ldots ,(X_{[k,k]},Y^*_{k,k}/t, C^*_{k,k}/t)\big)\end{align*}
be the corresponding order statistics and concomitants from iid sample of size $k$. Then 
\begin{align*}
\Big(\big(X_{[n-k+1,n]},\frac{Z_{n-k+1,n}}{Z_{n-k,n}}, \delta_{[n-k+1,n]}\big), \ldots ,\big(X_{[n,n]},\frac{Z_{n,n}}{Z_{n-k,n}},\delta_{[n,n]}\big)\Big) \mid Z_{n-k,n}=t\\
\sim ((X_{[1,k]}^*, V_{1,k}^*,\delta_{[1,k]}^*), \ldots, (X_{[k,k]}^*, V_{k,k}^*,\delta_{[k,k]}^*))
\end{align*}
where $V_i^*=\min\{Y_i^*/t, C_i^*/t\}$ and $\delta_i^*=1(Y_i^*\leq C_i^*)$. \\

\subsection{Decomposition and consistency}
The main result of this section is the following decomposition.

\begin{theorem}[Main decomposition]\label{thm:decomposition}
    Let $\varphi$  satisfy Condition \ref{cond-full}. Further, assume that $U_{F_{Y|X=x}}$ can be uniformly decomposed. Then
    \begin{align*}
        &r_{k,n}=\\
        &S_{k,n}(\varphi)-\Big\{ \frac{1}{k}\sum_{i=1}^k \varphi(X^*_{i},V_i^*)\gamma_0^t(V_i^*)\delta_i^*+\frac{1}{k}\sum_{i=1}^k\gamma_1^t(V_i^*)(1-\delta_i^*)-\frac{1}{k}\sum_{i=1}^k\gamma_2^t(V_i^*)\Big\}
    \end{align*}
     is such that for every $\epsilon>0$ 
     \begin{align*}
         \lim_{n\rightarrow \infty}\limsup_{t\rightarrow \infty} \pr(|k^{1/2}r_{k,n}|>\epsilon|Z_{n-k,n}=t)=0.
     \end{align*}
    \label{3.2}
\end{theorem}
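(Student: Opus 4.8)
The plan is to prove Theorem~\ref{thm:decomposition} by reducing it, via the conditional distributional representation displayed just above, to a finite-sample statement about the Kaplan--Meier integral of an i.i.d.\ sample of size $k$ drawn from the tail law $P^t=\mathcal L(X,Y/t,C/t\mid Z>t)$, and then establishing a Stute-type algebraic identity for that sample with \emph{all error terms controlled uniformly in $t$} as $t\to\infty$. The first step is to note that, after the reindexing $\ell=k-i+1$, the product weights $W_{ik}$ are precisely the product-limit weights attached to the $\ell$-th smallest point of a censored sample of size $k$. Consequently, conditionally on $Z_{n-k,n}=t$ and after normalising the top $k$ order statistics by $t$, the statistic $S_{k,n}(\varphi)$ coincides with the Kaplan--Meier integral $\int\varphi\,\dd\hat F_k^*$ of the sample $(X_i^*,V_i^*,\delta_i^*)_{i=1}^k\sim P^t$, with $V_i^*=\min\{Y_i^*/t,C_i^*/t\}$; any residual shift in the argument of $\varphi$ (between $Z_{n-i,n}$ and $Z_{n-i+1,n}$) is absorbed into $r_{k,n}$.

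Next, following \cite{stute} and \cite{EKM}, I would invoke the algebraic identity that expresses $\int\varphi\,\dd\hat F_k^*$ as the three empirical averages $k^{-1}\sum_i\varphi(X_i^*,V_i^*)\hat\gamma_0^t(V_i^*)\delta_i^*$, $k^{-1}\sum_i\hat\gamma_1^t(V_i^*)(1-\delta_i^*)$ and $-k^{-1}\sum_i\hat\gamma_2^t(V_i^*)$, where the $\hat\gamma_j^t$ are built from the \emph{empirical} sub-distribution functions $\hat H^t,\hat H^{0,t},\hat H^{11,t}$ of the size-$k$ sample, plus a remainder carrying (i) the ``$\prod$ versus $\exp$'' Taylor error inherent in the product weights and (ii) the contribution of the largest order statistic and the maximal censored interval. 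I would then replace the $\hat\gamma_j^t$ by their population counterparts $\gamma_j^t$ (the tail-counterpart functions introduced before Definition~\ref{def:decompose}), which produces a further error $k^{-1}\sum_i(\hat\gamma_j^t-\gamma_j^t)(V_i^*)(\cdots)$; collecting all contributions yields $r_{k,n}$.

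The core of the argument is then to bound $k^{1/2}r_{k,n}$ in conditional $L^1$ (or $L^2$) by some $\rho(k,t)$ with $\limsup_{t\to\infty}\rho(k,t)=:\rho(k)\to0$ as $k\to\infty$; given this, Markov's inequality and \eqref{speedk} yield the claim, since for each $n$ the quantity $\limsup_{t\to\infty}\pr(|k^{1/2}r_{k,n}|>\epsilon\mid Z_{n-k,n}=t)$ is at most $\rho(k_n)/\epsilon$, which tends to $0$ as $k_n\to\infty$. The moment engine for these estimates is the uniform bound $\sup_{t\ge t_0}\E_{P^t}\big[\hat\varphi(V^*)^{2+\varepsilon}(V^*)^{\alpha(\varepsilon)}\big]<\infty$, which I would derive from Condition~\ref{cond-full} and the uniform Potter bounds of Appendix~\ref{sec:potter} applied to $1-H^t$, whose tail index $1/\gamma_H(x)=1/\gamma_F(x)+1/\gamma_G$ is controlled uniformly in $x$ by $\gamma_F(x)\in[\gamma_F^L,\gamma_F^U]$ and in $t$ by slow variation; the uniform envelope $\hat\varphi$ conveniently removes the $X$-dependence at this point. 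The uniform Karamata decomposability of $U_{F_{Y|X=x}}$ from Definition~\ref{def:decompose} is what upgrades the per-$x$ regular-variation asymptotics of $F^t_{Y|X=x}$, $\gamma_0^t$, $\gamma_1^t$, $\gamma_2^t$ to estimates holding uniformly over $x\in\mathcal X$; together with the uniform Segers bounds of Appendix~\ref{sec:segers}, this delivers $t$-free bounds of the type $\sup_{t\ge t_0}\sup_w\big[(1-H^t(w))\gamma_j^t(w)\big]<\infty$ and the Lipschitz-in-$\hat H^t$ control needed for $\hat\gamma_j^t-\gamma_j^t$. With these in hand, the ``$\prod$ versus $\exp$'' term is quadratic in the jump sizes, hence $O_P(k^{-1})$ and negligible at scale $k^{-1/2}$, while the empirical-to-population term is dispatched by a uniform-in-$t$ Glivenko--Cantelli/DKW estimate for $\|\hat H^t-H^t\|_\infty$ (and similarly for the other sub-distributions) tested against the integrable weight furnished by Condition~\ref{cond-full}.

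I expect the main obstacle to be the maximal-interval term combined with the demand for uniformity in $t$. Even in the classical fixed-model Kaplan--Meier theory this is the delicate contribution, since it involves $\hat\gamma_0$ evaluated at the largest observation, which is only weakly stabilised; here one must moreover show that the controlling constants do not degrade while the sampling law $P^t$ drifts with $t$. This is exactly where the uniform Potter and Segers machinery is indispensable: it guarantees that $\gamma_0^t(V^*_{k,k})$ and the endpoint-truncated tail integrals $\int\hat\varphi^2\,\dd H^t$ are bounded by constants independent of $t$ for $t$ large, so that the endpoint contribution is $o_P(k^{-1/2})$ uniformly; the passage $\lim_n\limsup_t$ is then routine, as explained above. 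Apart from this point the structure of the argument parallels \cite{EKM,stute,stute-co,segers}, the new ingredient throughout being that every extreme-value estimate must be invoked in its form that is uniform across covariates and across the threshold $t$.
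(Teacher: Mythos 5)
Your overall strategy mirrors the paper's: condition on $Z_{n-k,n}=t$ to reduce to the Kaplan--Meier integral of the i.i.d.\ tail sample $(X_i^*,V_i^*,\delta_i^*)_{i=1}^k$, invoke the Stute-type exact identity $S^\varphi_{k,n}=\int\varphi\exp\{k\int\log(1+\tfrac{1}{k(1-H_k^t)})\dd H_k^{0,t}\}\dd H_k^{11,t}$, Taylor-expand the exponential into the three exchangeable averages plus remainders $B_{ik}^t,C_{ik}^t$, and control everything uniformly in $t$ via uniform Potter/Segers bounds. You also correctly identify the truncate-then-lift device (though you do not say so explicitly, the moment integrals in your ``moment engine'' are exactly what the paper's Lemma in Appendix C uses to remove the truncation).

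The genuine gap is in your treatment of the empirical-to-population contribution, i.e.\ the $C_{ik}^t$ term. You propose to dispatch it ``by a uniform-in-$t$ Glivenko--Cantelli/DKW estimate for $\|\hat H^t-H^t\|_\infty$ ... tested against the integrable weight.'' That cannot deliver $o_P(k^{-1/2})$. DKW gives $\|H_k^t-H^t\|_\infty=O_P(k^{-1/2})$, so a product of two such sup-norms naively gives $O_P(k^{-1})$, but this only works if the kernel is of bounded variation. Here the kernel carries the weights $1/(1-H^t(v))$ and $1/(1-H^t(v))^2$, which diverge at the right endpoint; after integration by parts the sup-norm bound is multiplied by exactly these unbounded weights, and the resulting integral is \emph{not} finite under Condition~\ref{cond-full} alone -- this is the maximal-interval problem you yourself flag as delicate. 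The paper circumvents it by writing the dangerous cross-terms, e.g.\
\begin{align*}
\int\!\!\int 1_{v<w}\frac{\varphi(x,w)\gamma_0^t(w)}{1-H^t(v)}\,(H_k^{0,t}-H^{0,t})(\dd v)\,(H_k^{11,t}-H^{11,t})(\dd x,\dd w),
\end{align*}
as degenerate $U$-statistics, computing $\theta^t,\xi_1,\xi_2$ explicitly and showing, via uniform Potter bounds and dominated convergence, that these moments are bounded \emph{uniformly in} $t$; the variance of the degenerate part is then $O(\xi_2/k^2)$, giving $\overline O_P(k^{-1})$ without ever needing a sup-norm bound on the weighted integrand. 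You should replace the DKW step with the degenerate $U$-statistic (Hájek projection) argument of Stute/EKM and verify that $\theta^t,\xi_1,\xi_2$ converge as $t\to\infty$ to $t$-free quantities; this is where the covariate structure enters through $H^{11,t}(\dd x,\dd v)=H^{1,t}(\dd v)F^t_{X\mid Y=v}(\dd x)$ and the uniform Karamata/Potter bounds. Absent that, the argument has a hole precisely at the rate you need, $o_P(k^{-1/2})$.

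A secondary, minor point: the displayed $S_{k,n}(\varphi)$ in the paper uses $Z_{n-i,n}$, which is a typographical slip for $Z_{n-i+1,n}/Z_{n-k,n}$; there is no ``shift'' to absorb into $r_{k,n}$, so that part of your reduction can be dropped.
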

By Theorem \ref{3.2} it is possible to decompose $S_{k,n}(\varphi)$ into simple exchangeable sums modulo a uniformly negligible term $r_{k,n}$. For large samples, the exchangeability of the sums has a law of large numbers behaviour, as is seen from the following consistency result. Let $F$ be the multivariate distribution function of the vector $(Y,X)$.

\begin{theorem}[Weak consistency]
    Assume the same conditions as in Theorem \ref{3.2}. Then
    \begin{align*}
        S_{k,n}\stackrel{\pr}{\to} S_{\circ}(\varphi)
    \end{align*}
where $S_{\circ}=\int_1^{\infty}\varphi \dd F^{\circ}$ and $F^{\circ}$ is the limit distribution of $F^t$ as $t\rightarrow \infty$.
    \label{thm:weak-con}
\end{theorem}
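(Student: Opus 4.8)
The plan is to feed the decomposition of Theorem~\ref{thm:decomposition} into a law of large numbers for the three exchangeable averages, and then to identify the common limit of their conditional means as $t\to\infty$ through a tail analogue of Stute's identity together with the convergence of the tail counterparts $F^t\Rightarrow F^\circ$. All estimates are produced conditionally on $\{Z_{n-k,n}=t\}$; since $Z_{n-k,n}\stackrel{\pr}{\to}\infty$ (it is an intermediate order statistic of a sample in the Fr\'echet domain), the passage from the conditional statement to the unconditional convergence claimed in the theorem is the same routine argument used in \cite{EKM,bww}, so I will concentrate on the conditional picture.

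By Theorem~\ref{thm:decomposition}, $S_{k,n}(\varphi)=A_0+A_1-A_2+r_{k,n}$ with $A_0=\frac1k\sum_{i=1}^k\varphi(X^*_i,V^*_i)\gamma_0^t(V^*_i)\delta_i^*$, $A_1=\frac1k\sum_{i=1}^k\gamma_1^t(V^*_i)(1-\delta^*_i)$, $A_2=\frac1k\sum_{i=1}^k\gamma_2^t(V^*_i)$, and with $r_{k,n}$ negligible (even at rate $k^{-1/2}$) in the sense of that theorem. Conditionally on $\{Z_{n-k,n}=t\}$ the triples $(X^*_i,V^*_i,\delta^*_i)$ are i.i.d., so these are ordinary i.i.d. averages and the ordering of the sample is irrelevant for the sums. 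I would first show, using Condition~\ref{cond-full} together with the uniform Potter and Segers-type bounds of Appendices~\ref{sec:potter} and \ref{sec:segers}, which control the polynomial order of $\gamma_0^t,\gamma_1^t,\gamma_2^t$ uniformly in $t$, that these summands have second moments bounded uniformly in $t$ for $t$ large; the integrability requirement of Condition~\ref{cond-full}, with its weight $w^{\alpha(\varepsilon)}$, is exactly what offsets that polynomial order against the tail of $V^*$. Chebyshev's inequality then gives that each average lies within $o_{\pr}(1)$ of its conditional mean $m_j(t)$, uniformly over $t\ge t_0$, so that conditionally on $\{Z_{n-k,n}=t\}$ one has $S_{k,n}(\varphi)=m_0(t)+m_1(t)-m_2(t)+\rho_{k,n}(t)$, where the law-of-large-numbers part of $\rho_{k,n}(t)$ is uniformly small over $t\ge t_0$ and the $r_{k,n}$ part is controlled by Theorem~\ref{thm:decomposition}.

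It then remains to prove $m_0(t)+m_1(t)-m_2(t)\to S_\circ(\varphi)$ as $t\to\infty$. Because $\gamma_0^t,\gamma_1^t,\gamma_2^t$ are exactly the tail counterparts of Stute's auxiliary functions, the algebraic identity behind the consistency of Kaplan--Meier integrals in \cite{stute,stute-co} transfers and yields $m_0(t)+m_1(t)-m_2(t)=\int_1^\infty\varphi\,\dd F^t$, the integral of $\varphi$ against the tail counterpart $F^t$ of the law of $(Y,X)$; the tail-independence relation~\eqref{dependence_assumption} is what legitimizes this identification, and---in contrast to the classical situation---no boundary truncation of $\varphi$ is needed, since the limiting conditional law of $V^*$ is Pareto with unbounded upper support while $\pr(\delta^*=1\mid V^*>v)$ stays bounded away from $0$. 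Passing to $t\to\infty$, one has $F^t\Rightarrow F^\circ$ (the conditional laws $F^t_{Y|X=x}$ converging to $\mathrm{Pareto}(\gamma_F(x))$ as recalled in Section~\ref{sec:pre}, and $F^t_X\Rightarrow F^\circ_X$ being exactly the object studied in Appendix~\ref{sec:proof-X}), and upgrading this weak convergence to $\int_1^\infty\varphi\,\dd F^t\to\int_1^\infty\varphi\,\dd F^\circ=S_\circ(\varphi)$ uses uniform integrability of $\varphi$ under $\{F^t\}_{t\ge t_0}$, which the $(2+\varepsilon)$-moment margin of Condition~\ref{cond-full} supplies, once more through the uniform Potter bounds.

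Assembling the pieces, given $\varepsilon>0$ one picks $t_0$ with $|m_0(t)+m_1(t)-m_2(t)-S_\circ(\varphi)|<\varepsilon$ on $[t_0,\infty)$, and then, using that $\rho_{k,n}(t)$ is small with high conditional probability for the relevant large $t$ and that $\pr(Z_{n-k,n}<t_0)\to0$, concludes $\pr(|S_{k,n}(\varphi)-S_\circ(\varphi)|>2\varepsilon)\to0$. The step I expect to be the main obstacle is the third paragraph: establishing the tail-adapted Stute identity while carefully tracking how censoring enters each $\gamma_i^t$ through the tail counterparts $H^t,H^{0,t},H^{11,t}$, and---above all---obtaining the uniform-in-$t$ moment bounds for $\gamma_0^t,\gamma_1^t,\gamma_2^t$ and the uniform integrability of $\varphi$ under $F^t$. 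This is precisely where the uniform Karamata representation of Definition~\ref{def:decompose} and the uniform Potter/Segers bounds of Appendices~\ref{sec:potter} and \ref{sec:segers} do the real work; once Theorem~\ref{thm:decomposition} and those bounds are in hand, the remainder of the argument is bookkeeping.
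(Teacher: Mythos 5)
Your proposal follows essentially the same route as the paper: apply the decomposition of Theorem~\ref{3.2} to reduce $S_{k,n}(\varphi)$ to an i.i.d.\ average (conditionally on $Z_{n-k,n}=t$) plus a negligible remainder, invoke a Chebyshev/LLN argument for the average, use the tail Stute identity $\E[\gamma_1^t(V^*)(1-\delta^*)]=\E[\gamma_2^t(V^*)]$ to identify the conditional mean with $\int\varphi\,\dd F^t$, and let $t\to\infty$ under Condition~\ref{cond-full} and the uniform Potter bounds (dominated convergence) to reach $\int\varphi\,\dd F^{\circ}=S_\circ(\varphi)$. The paper's proof is more terse---it treats the three sums as a single $W_j$-average, computes $\E W^*=\int\varphi\,\dd F^t\to\int\varphi\,\dd F^{\circ}$ directly, and then defers to~\cite{EKM} for the unconditional passage---but the ingredients and their order are the same as in your write-up.
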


The limit distribution of $F^t$ can be evaluated through writing it as $F^t(x,y)=F^t_{Y|X=x}(y)F^t_X(x)$. The first term in the product is regularly varying, which has a Pareto limit as $t\to\infty$. The second term in the product is more subtle and is treated in the following lemma.
\begin{lemma}[Limit of the covariate distribution]
    Assume that $(x,y)\mapsto F_{Y|X=x}(y)$ can be uniformly decomposed. In addition, assume that the $c$-function from its Karamata's representation is eventually bounded. Let $B=\{x\in\mathcal{X}:\gamma_F(x)\ge \gamma_F(y),\forall y\in \mathcal{X}\}$. Then for $B_1\subset \mathcal{X}\setminus B$ with $\pr(X\in B_1)>0$, we have
    \begin{align*}
         \pr(X\in B_1 |Y>t)\rightarrow 0,
    \end{align*}
    while $B_1\subset B$ with $\pr(X\in B_1)>0$ we have
    \begin{align*}
        \lim_{t\to \infty}\pr(X\in B_1 |Y>t)=\lim_{t\to \infty}  \frac{\int_{B_1}L_{Y|X=x}(t)F_X(\dd x)}{\int_B L_{Y|X=x}(t)F_X(\dd x)},
    \end{align*}
    if it exists.
    \label{FX-dist}
\end{lemma}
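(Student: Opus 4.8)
The plan is to write the conditional probability explicitly via Bayes' rule and then exploit the uniform Karamata representation to control the ratio of slowly varying parts uniformly in the covariate. Concretely, for any measurable $B_1\subset\mathcal{X}$,
\begin{align*}
\pr(X\in B_1\mid Y>t)=\frac{\int_{B_1}\bigl(1-F_{Y|X=x}(t)\bigr)F_X(\dd x)}{\int_{\mathcal{X}}\bigl(1-F_{Y|X=x}(t)\bigr)F_X(\dd x)}=\frac{\int_{B_1}L_{Y|X=x}(t)\,t^{-1/\gamma_F(x)}F_X(\dd x)}{\int_{\mathcal{X}}L_{Y|X=x}(t)\,t^{-1/\gamma_F(x)}F_X(\dd x)}.
\end{align*}
The dominant contribution as $t\to\infty$ comes from the covariate values maximizing $\gamma_F(x)$, i.e. from the set $B$, because $t^{-1/\gamma_F(x)}$ decays fastest (slowest) exactly where $1/\gamma_F(x)$ is largest (smallest). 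The first step is therefore to normalize by pulling out the factor $t^{-1/\gamma_F^{\max}}$, where $\gamma_F^{\max}=\sup_{x\in\mathcal{X}}\gamma_F(x)$ is attained on $B$, and to show that the numerator restricted to $\mathcal{X}\setminus B$ is of strictly smaller order than the numerator restricted to $B$.

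For the first claim, take $B_1\subset\mathcal{X}\setminus B$ with $\pr(X\in B_1)>0$. The key point is that on $B_1$ one has $1/\gamma_F(x)\ge 1/\gamma_F^{\max}+\eta$ for some $\eta>0$ only if $\gamma_F$ is bounded away from $\gamma_F^{\max}$ on $B_1$; in general one splits $B_1$ into the part where $\gamma_F(x)\le\gamma_F^{\max}-\eta$ (handled directly, contributing $O(t^{-\eta'})$ relative to the denominator once the slowly varying factors are controlled via the uniform representation and the eventual boundedness of $c$) and the part where $\gamma_F(x)>\gamma_F^{\max}-\eta$, whose $F_X$-mass tends to $0$ as $\eta\to0$ by continuity of the measure, since $B_1\cap B=\emptyset$. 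Letting $\eta\to0$ after $t\to\infty$ gives the first assertion. Here the uniform Karamata representation is what lets us bound $L_{Y|X=x}(t)$ above and below uniformly in $x$ by expressions that are slowly varying and, crucially, do not interfere with the polynomial separation — this is exactly the role of the hypothesis that $c$ is eventually bounded, which prevents the slowly varying part from blowing up and swamping the $t^{-1/\gamma_F(x)}$ gap.

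For the second claim, take $B_1\subset B$ with $\pr(X\in B_1)>0$. Now on all of $B$ the exponent $1/\gamma_F(x)$ is constant and equal to $1/\gamma_F^{\max}$, so the factor $t^{-1/\gamma_F(x)}$ cancels between numerator and denominator, and — using the first claim to discard the contribution of $\mathcal{X}\setminus B$ in the denominator — we are left with
\begin{align*}
\pr(X\in B_1\mid Y>t)=\frac{\int_{B_1}L_{Y|X=x}(t)F_X(\dd x)}{\int_{B}L_{Y|X=x}(t)F_X(\dd x)}\bigl(1+o(1)\bigr),
\end{align*}
which is the stated expression; the limit exists precisely when the right-hand ratio converges, which is the hypothesis. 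The main obstacle I anticipate is the first claim: making rigorous the interchange of the limits $t\to\infty$ and $\eta\to0$ and ensuring that the slowly varying functions, which may oscillate, are dominated uniformly. This is where the uniform decomposition of $(x,y)\mapsto F_{Y|X=x}(y)$ and the eventual boundedness of its $c$-function do the heavy lifting, together with a dominated-convergence argument to pass the limit inside the $F_X$-integral over $B$ in the denominator. The continuity/absence of atoms issues at the boundary of $B$ are handled by the assumption $\pr(X\in B_1)>0$ together with monotone approximation of $B$ by $\{x:\gamma_F(x)>\gamma_F^{\max}-\eta\}$.
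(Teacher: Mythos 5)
Your overall approach---express $\pr(X\in B_1\mid Y>t)$ via Bayes' rule, use the uniform Karamata representation together with the eventual boundedness of $c$ to squeeze $L_{Y|X=x}(t)$ between $K't^{-\epsilon}$ and $Kt^{\epsilon}$ uniformly in $x$ for $t$ large, and then let the polynomial gap $t^{-1/\gamma_F(x)}$ versus $t^{-1/\gamma_F^U}$ do the work---is exactly the one the paper takes, and the second claim (constant exponent on $B$, discard the $\mathcal{X}\setminus B$ contribution in the denominator via the first claim) also matches the paper. The one place you diverge, the near/far split used to establish the first claim for a general $B_1$, is where there is a genuine gap.

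The paper only argues the first claim for a set $A\subset\mathcal{X}\setminus B$ on which $\gamma_F^A:=\max_{x\in A}\gamma_F(x)<\gamma_F^U$ strictly, picks $\epsilon$ small enough that $-1/\gamma_F^A+\epsilon<-1/\gamma_F^U-\epsilon$, and concludes $\pr(X\in A\mid Y>t)\leq K_1\,t^{-1/\gamma_F^A+1/\gamma_F^U+2\epsilon}\to 0$; the general $B_1$ is waved away. Your near/far decomposition attempts to close this, but ``letting $\eta\to 0$ after $t\to\infty$'' does not go through: for fixed $\eta$, the uniform Karamata bound gives at best $\pr\bigl(X\in B_1\cap\{\gamma_F>\gamma_F^U-\eta\}\mid Y>t\bigr)\lesssim (K/K')\,t^{2\epsilon}\,F_X\bigl(B_1\cap\{\gamma_F>\gamma_F^U-\eta\}\bigr)/\pr(X\in B)$, which diverges as $t\to\infty$, so the $\limsup$ over $t$ is $+\infty$ and sending $\eta\to 0$ afterwards accomplishes nothing. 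Smallness of the $F_X$-mass of the near set does not control the $F_X^t$-mass precisely because conditioning on $\{Y>t\}$ reweights toward $B$ and its boundary, and the hypotheses (uniform decomposition plus eventual boundedness of $c$) do not supply a dominating function uniform in both $x$ and $t$ over the near-$B$ region, which is what your appeal to dominated convergence would require. You correctly flag this interchange as the main obstacle, but the claim that the hypotheses ``do the heavy lifting'' is precisely the missing step.
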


In other words, the covariate limit distribution is concentrated on the set of those $x$ which yield the largest attainable tail index, and on that set, the original $F_X$ distribution is re-weighted according to the slowly varying functions.

\subsection{Central limit theorems}
We turn our attention to the central limit theorem which is attainable directly from Theorem \ref{thm:decomposition}, and where the centering sequence depends on $Z_{n-k,n}$ and hence is random. An important component of the limit is its variance. For this purpose, we require the specification of
\begin{align*}
    W^{\circ}=\varphi(X^{\circ},V^{\circ})\gamma_0^{\circ}(V^{\circ})\delta^{\circ}+\gamma_1^{\circ}(V^{\circ})(1-\delta^{\circ})-\gamma_2^{\circ}(V^{\circ})
\end{align*}
where ${\circ}$ denotes the limit variable or distribution as $t\rightarrow \infty$. We then have the following theorem.

\begin{theorem}[Central limit theorem]
    Under the conditions of Theorem \ref{3.2}
    \begin{align*}
        \sqrt{k}\int \varphi \dd (\amsmathbb{F}_{k,n}-F^{Z_{n-k,n}})\stackrel{d}{\to}\mathcal{N}(0,\sigma_{\varphi}^2)
    \end{align*}
    where $\sigma^2_{\varphi}=\mbox{Var}(W^{\circ})$.
    \label{thm:normal1}
\end{theorem}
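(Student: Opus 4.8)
The plan is to reduce the central limit theorem to a statement about the exchangeable sums appearing in Theorem~\ref{thm:decomposition}, and then to use the known CLT machinery for such sums. First I would condition on $Z_{n-k,n}=t$. On this event, Theorem~\ref{3.2} gives
\[
\sqrt{k}\,S_{k,n}(\varphi)=\frac{1}{\sqrt{k}}\sum_{i=1}^k\Big\{\varphi(X_i^*,V_i^*)\gamma_0^t(V_i^*)\delta_i^*+\gamma_1^t(V_i^*)(1-\delta_i^*)-\gamma_2^t(V_i^*)\Big\}+\sqrt{k}\,r_{k,n},
\]
with $\sqrt{k}\,r_{k,n}\to 0$ in the iterated-limit sense stated there. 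Writing $W_i^t$ for the $i$-th summand, the key observation (inherited from the Kaplan--Meier integral structure, cf.~\cite{stute,EKM}) is that the deterministic centering is $\E[W_i^t]=\int\varphi\,\dd F^t$ up to a term that is $o(k^{-1/2})$, so that $\sqrt{k}\int\varphi\,\dd(\amsmathbb{F}_{k,n}-F^t)$ equals $k^{-1/2}\sum_{i=1}^k(W_i^t-\E W_i^t)$ plus negligible terms. I would verify this centering identity by the same computation as in \cite{stute-co}, now applied to the tail counterparts $H^t,H^{0,t},H^{11,t}$ in place of $H,H^0,H^{11}$.

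Next I would establish the CLT for $k^{-1/2}\sum_{i=1}^k(W_i^t-\E W_i^t)$ as $k\to\infty$ and then $t\to\infty$. Here the summands are not independent but only \emph{exchangeable} (the $X_i^*$, $V_i^*$, $\delta_i^*$ come from concomitants and order statistics of an iid sample, and $\gamma_0^t,\gamma_1^t,\gamma_2^t$ are deterministic given $t$), so I would invoke the exchangeable-array CLT used in \cite{EKM} — essentially a martingale-type argument showing that the pairwise correlations between distinct summands are asymptotically negligible, so that the exchangeable sum behaves like an iid sum with the same marginal variance. Concretely one shows $\mbox{Cov}(W_1^t,W_2^t)=o(1/k)$ uniformly for large $t$, and $\mbox{Var}(W_1^t)\to\mbox{Var}(W^\circ)=\sigma_\varphi^2$ as $t\to\infty$, together with a Lyapunov condition. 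The uniform-in-$t$ control of moments of $W_i^t$ — needed both for the Lyapunov condition and to interchange the limits in $k$ and $t$ — is exactly where Condition~\ref{cond-full} enters: the envelope bound $|\varphi(x,w)|\le\hat\varphi(w)$ combined with the $(2+\varepsilon)$-moment integrability against $w^{\alpha(\varepsilon)}$ controls $\E[|W_i^t|^{2+\varepsilon}]$ uniformly in $t$, using the uniform Potter bounds (Appendix~\ref{sec:potter}) to dominate $\gamma_0^t,\gamma_1^t,\gamma_2^t$ and the tail-counterpart densities by $t$-free integrable functions. The convergence $\mbox{Var}(W_1^t)\to\sigma_\varphi^2$ then follows from the weak convergence of $(X^*,V^*,\delta^*)$ to $(X^\circ,V^\circ,\delta^\circ)$ — which rests on Lemma~\ref{FX-dist} for the covariate part and the Pareto limit for the tail part — together with uniform integrability supplied again by Condition~\ref{cond-full}.

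Finally I would remove the conditioning on $Z_{n-k,n}=t$. Since all the above convergences hold in the form $\lim_{n}\limsup_{t}(\cdots)$, and $Z_{n-k,n}\to\infty$ in probability as $n\to\infty$ (because $k/n\to0$), a standard argument — integrating the conditional characteristic function $\E[e^{i\lambda\sqrt{k}\int\varphi\,\dd(\amsmathbb{F}_{k,n}-F^{Z_{n-k,n}})}\mid Z_{n-k,n}=t]$ against the law of $Z_{n-k,n}$ and using dominated convergence, exactly as in \cite{EKM,stute-co} — yields the unconditional convergence to $\mathcal{N}(0,\sigma_\varphi^2)$.

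The main obstacle I anticipate is the \emph{uniformity in $t$}: one must show that all the error terms in the centering identity, the covariance bound $\mbox{Cov}(W_1^t,W_2^t)=o(1/k)$, and the Lyapunov bound hold with constants that do not blow up as $t\to\infty$. This is precisely what forces the passage from classical Karamata/Potter estimates to their uniform analogues in Appendices~\ref{sec:potter} and~\ref{sec:segers}, and care is needed because the covariate distribution $F_X^t$ itself is moving (concentrating on the set $B$ of maximal tail index per Lemma~\ref{FX-dist}), so moment bounds must be uniform over this shifting family of covariate laws as well as over the tail level $t$.
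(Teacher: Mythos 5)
Your high-level strategy is the right one — condition on $Z_{n-k,n}=t$, apply the decomposition of Theorem~\ref{thm:decomposition}, establish a CLT for the resulting sum, and then de-condition using the iterated-limit structure built into the $\overline{o}_\pr$ notation. This is indeed what the paper does (it simply cites the analogous proof in \cite{EKM} with no further detail), and your use of Condition~\ref{cond-full} and the uniform Potter bounds to control moments uniformly over $t$ and over the moving covariate law is exactly where the work lies.

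However, there is a genuine misreading of the dependence structure that makes your argument unnecessarily heavy. You write that conditional on $Z_{n-k,n}=t$ the summands
\begin{align*}
W_i^t=\varphi(X_i^*,V_i^*)\gamma_0^t(V_i^*)\delta_i^*+\gamma_1^t(V_i^*)(1-\delta_i^*)-\gamma_2^t(V_i^*)
\end{align*}
``are not independent but only exchangeable,'' because the $X_i^*,V_i^*,\delta_i^*$ ``come from concomitants and order statistics.'' That is not what the decomposition delivers. Although the quantities fed into $\amsmathbb{F}_{k,n}^t$ are order statistics $V_{i,k}^*$ and concomitants $X_{[i,k]}^*,\delta_{[i,k]}^*$, the rearrangement performed in the proof of Theorem~\ref{thm:cond} converts the product-limit integral into a sum over the \emph{unordered} sample $(X_i^*,V_i^*,\delta_i^*)_{i=1}^k$, and the paper states explicitly that conditionally on $Z_{n-k,n}=t$ this sequence is iid. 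The ``exchangeable sums'' advertised in the abstract refer to the \emph{unconditional} law, where all terms share the common random threshold $Z_{n-k,n}$; once you condition, you have genuine iid triangular-array sums. Consequently, the step you propose — invoking an exchangeable-array CLT and showing $\mbox{Cov}(W_1^t,W_2^t)=o(1/k)$ — is not only superfluous, it misdescribes the object: conditionally, that covariance is identically zero. What is actually needed is a Lindeberg/Lyapunov CLT for iid triangular arrays (the array index being $t$, or $n$ through $Z_{n-k,n}$), with Condition~\ref{cond-full} and the uniform Potter bounds supplying the required $t$-uniform $(2+\varepsilon)$-moment control, and Lemma~\ref{FX-dist} plus the Pareto limit giving $\mbox{Var}(W_1^t)\to\sigma_\varphi^2$. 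A second, minor point: the centering is exact, $\E[W_i^t\,|\,Z_{n-k,n}=t]=\int\varphi\,\dd F^t$, as verified in the proof of Theorem~\ref{thm:weak-con} (the terms $\gamma_1^t(V^*)(1-\delta^*)-\gamma_2^t(V^*)$ cancel in expectation), so there is no residual $o(k^{-1/2})$ correction to track.
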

The next step now consists of showing that the random centering sequence $\int \varphi \dd F^{Z_{n-k,n}}$ can be replaced with a non-random centering sequence $\int \varphi \dd F^{U_H(n/k)}$, though we require some additional conditions to ensure that $\varphi$ behaves well enough. 

\begin{condition}[Lipchitz and derivative behaviour of $\varphi$]
    Let $\hat{\varphi}'$ be the envelope of $\frac{\partial}{\partial y}\varphi(x,y)$.  For some $\epsilon>0$
    \begin{align*}
        \int_1^{\infty} \hat{\varphi}'(w) w^{-1/\gamma_F^U+\epsilon}\dd w<\infty.
    \end{align*}
    Additionally, for all $x\in\mathcal{X}$
     \begin{align*}
        |\varphi(x,y)-\varphi(x,z)|\leq M |\hat{\varphi}(y)-\hat{\varphi}(z)| \text{ for a constant }M.
    \end{align*}
    \label{cond:assyp}
\end{condition}

\begin{definition}[Uniform normalized tail quantile functions] We say that $U_{F_{Y|X=x}}(t)$ is uniform normalized if there exists a $T$ such that
    \begin{align*}
        U_{F_{Y|X=x}}(t)=U_{F_{Y|X=x}}(T)\exp\left(\int_T^t\eta_x(u) \dd u/u\right) \quad \forall x\in \mathcal{X}
    \end{align*}
    where $\eta_x(u)$ is a real measurable function uniformly converging to $\eta_x=\gamma_F(x)$.
    \label{def:uni-U}
\end{definition}

Finally, since we have introduced some new notions to control the behaviour of $\varphi$ and $U_{F_{Y|X=x}}$, we require a more precise speed of convergence than \eqref{speedk}.
\begin{condition}[Speed on the $k$ sequence]
Let $\eta_x(\cdot)$ be differentiable. The sequence $k$ satisfies that
    \begin{align*}
        \sup_{x^0,x^1\in\mathcal{X}}\log(n/k)\int_{U^{\leftarrow}_{F_{Y|X=x^0}}(U_H(n/k))}^{\infty} |\eta'_{x^1}(l)|\dd l \rightarrow 0\\
        \log\left(\frac{L_{x^1}(U_H(n/k))}{L_{x^0}(U_H(n/k))}\right)=o\left(\log(n/k)\left|\frac{1}{\gamma_F(x^0)}-\frac{1}{\gamma_F(x^1)}\right|\right) 
    \end{align*}
    uniformly in $x^0,x^1\in \mathcal{X}$
\label{cond:slowly}
\end{condition}

\begin{theorem}[CLT under Lipschitz condition]
    Assume the same conditions as in Theorem \ref{3.2}. Further assume Condition \ref{cond:assyp}, and that $U_{F_{Y|X=x}}$ is uniform normalized. Assume that $c_{L_{Y|X=x}}$ is bounded, and there exists $\epsilon>0$ such that eventually $-u^{-\epsilon}<\delta_{L_{Y|X}}(x,u)<u^{-\epsilon}$, where $c_{L_{Y|X}}(x)$ and $\delta_{L_{Y|X}}$ is respectively the $c$ and $\delta$-function from the Karamata representation of $L_{Y|X}$. Finally let $k$ satisfy $k=o(\log^b(n^a))$ for some $b<2,\,a>0$, and Condition \ref{cond:slowly}.

 Then 
    \begin{align}
        \sqrt{k}\int \varphi \dd (F^{Z_{n-k,n}}-F^{U_H(n/k)}) \stackrel{\pr}{\to} 0.\label{THM35}
    \end{align}

As a direct consequence
    \begin{align*}
        \sqrt{k} \int\varphi \dd (\amsmathbb{F}_{k,n}-F^{U_H(n/k)})\stackrel{d}{\rightarrow} \mathcal{N}(0,\sigma^2_{\varphi}).
    \end{align*}
    \label{THM:nlast}
\end{theorem}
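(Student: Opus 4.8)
The plan is to reduce the statement to showing $\sqrt{k}\int\varphi\,\dd(F^{Z_{n-k,n}}-F^{U_H(n/k)})\to 0$ in probability, since the final convergence then follows by combining with Theorem~\ref{thm:normal1} and Slutsky's theorem, noting that $\mathbb{F}_{k,n}-F^{U_H(n/k)}=(\mathbb{F}_{k,n}-F^{Z_{n-k,n}})+(F^{Z_{n-k,n}}-F^{U_H(n/k)})$. So the entire work is in establishing \eqref{THM35}. The starting point is to write $F^t(x,y)=F^t_{Y|X=x}(y)\,F^t_X(x)$ and split the difference $F^{Z_{n-k,n}}-F^{U_H(n/k)}$ into a piece coming from the conditional tail $F^t_{Y|X=x}$ and a piece coming from the covariate tail distribution $F^t_X$. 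For the conditional-tail piece, the key fact is that $Z_{n-k,n}/U_H(n/k)\xrightarrow{\pr}1$ (a standard intermediate order statistic result), so that on the event $\{Z_{n-k,n}=t\}$ with $t\approx U_H(n/k)$ one controls $F^t_{Y|X=x}(y)-F^{U_H(n/k)}_{Y|X=x}(y)$ by a Taylor/mean-value argument in $t$. Here the uniform normalized representation of $U_{F_{Y|X=x}}$ (Definition~\ref{def:uni-U}) enters: it lets us write the difference of the two tail counterparts as an integral of $\eta_x(u)-\gamma_F(x)$ over a shrinking interval near $U_H(n/k)$, and Condition~\ref{cond:slowly} is precisely what forces $\log(n/k)$ times that integral (uniformly in $x$) to zero. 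The envelope integrability in Condition~\ref{cond:assyp} then turns the pointwise-in-$y$ control into control of the integral $\int\varphi\,\dd(\cdot)$, after an integration by parts that moves derivatives onto $\varphi$ and picks up $\hat\varphi'$.

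Next I would handle the covariate piece $\int\varphi\,\dd F^t_X$ as a function of $t$. By Lemma~\ref{FX-dist} the family $F^t_X$ concentrates on $B$ and is reweighted by the slowly varying functions $L_{Y|X=x}$; the difference $F^t_X-F^{U_H(n/k)}_X$ is therefore governed by the ratios $L_{Y|X=x}(t)/L_{Y|X=x}(U_H(n/k))$ and by the ratio $\int_B L_{Y|X=x}(t)\,F_X(\dd x)\big/\int_B L_{Y|X=x}(U_H(n/k))\,F_X(\dd x)$. Using the Karamata representation of $L_{Y|X=x}$ with bounded $c$-function $c_{L_{Y|X=x}}$ and the bound $-u^{-\epsilon}<\delta_{L_{Y|X}}(x,u)<u^{-\epsilon}$ eventually, these ratios are $1+O\big(|\log(t/U_H(n/k))|\cdot U_H(n/k)^{-\epsilon}\big)+o(1)$ uniformly in $x$; combined with $Z_{n-k,n}/U_H(n/k)\xrightarrow{\pr}1$ and the rate $k=o(\log^b(n^a))$, $b<2$, this gives a bound that, multiplied by $\sqrt{k}$, still tends to zero. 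The second displayed requirement in Condition~\ref{cond:slowly} is used here to ensure that the reweighting does not interact badly with the parts of $\mathcal X$ where $\gamma_F(x)$ is strictly below its maximum — it guarantees those regions remain exponentially suppressed compared with the main term even after the $\sqrt k$ scaling. Throughout, the uniform Potter bounds (Appendix~\ref{sec:potter}) and the uniform Segers bounds (Appendix~\ref{sec:segers}) are invoked to convert "uniform convergence of $c$ and $\delta$ functions" into genuine uniform numerical bounds on the relevant ratios over all of $\mathcal X$ simultaneously.

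Finally I would assemble the two pieces. Conditioning on $Z_{n-k,n}=t$, I bound $\big|\sqrt{k}\int\varphi\,\dd(F^t-F^{U_H(n/k)})\big|$ by the sum of the conditional-tail bound and the covariate bound, each shown to be $o(1)$ as $n\to\infty$ uniformly for $t$ in a shrinking neighbourhood of $U_H(n/k)$; then I integrate out the (asymptotically degenerate) law of $Z_{n-k,n}/U_H(n/k)$, using that $\pr(|Z_{n-k,n}/U_H(n/k)-1|>\rho)\to 0$ for every $\rho>0$. This yields \eqref{THM35}, and the stated CLT follows as noted above.

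The main obstacle, I expect, is the covariate piece rather than the conditional-tail piece. The conditional-tail part is essentially the univariate argument of \cite{EKM} made uniform, and the machinery (uniform Karamata, uniform Potter, uniform Segers) is already set up for exactly that. The covariate part, by contrast, involves controlling the \emph{ratio} of integrals $\int_B L_{Y|X=x}(\cdot)\,F_X(\dd x)$ at two nearby arguments, uniformly and at rate fast enough to survive multiplication by $\sqrt k$; this is genuinely new, it is where both displays of Condition~\ref{cond:slowly} and the $k=o(\log^b(n^a))$ restriction are consumed, and it requires care in separating the "main set" $B$ from its complement and in checking that the slowly varying reweighting is asymptotically flat on the relevant scale. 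Making the dependence on the $\delta_{L_{Y|X}}$-bound $u^{-\epsilon}$ quantitative enough to beat $\sqrt k$, while only assuming $k=o(\log^b(n^a))$ with $b<2$, is the delicate bookkeeping step.
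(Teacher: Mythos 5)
Your proposal takes a genuinely different route from the paper's. The paper does not split $F^t$ into a conditional-tail piece and a covariate piece. Instead it represents $\int\varphi\,\dd F^t=\E\big[\varphi\big(X^t,\,U_{F_{Y|X=X^t}}(s\xi)/U_{F_{Y|X=X^t}}(s)\big)\big]$ with $\xi$ a standard Pareto variable and $s=U^{\leftarrow}_{F_{Y|X=X^t}}(t)$, applies the Lipschitz part of Condition~\ref{cond:assyp} plus the mean value theorem to replace $\varphi$ by $\hat\varphi'$ (thereby collapsing the explicit $x$-dependence of $\varphi$ before any comparison of covariate laws is attempted), and then invokes the extended uniform Segers bound, Lemma~\ref{segers} display~\eqref{Segers2}, to bound the difference of the two quantile ratios by an $A$-term times a log-ratio, a $B$-term, and a $|\eta_{x^0}-\eta_{x^1}|$-term. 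The covariate fluctuation is absorbed entirely into the last piece, $\sqrt k\,\E\big[|\gamma(X^{Z_{n-k,n}})-\gamma(X^{U_H(n/k)})|\big]$, which the paper controls at the required $\sqrt k$ rate via the rate-carrying Lemma~\ref{lemma:prop} together with the calibration Lemma~\ref{lemma:g} and Smirnov's lemma. You cite Lemma~\ref{FX-dist} for this, but that lemma gives only the limit of $F^t_X$ without a rate, which is not enough to beat the $\sqrt k$ in front; whatever decomposition one uses, one must still prove that $\sqrt k\,\pr(X\in A_k\mid Z>t)\to 0$ for a shrinking band $A_k=\{|\gamma(x)-\gamma^U|>g(k)\}$, and that is precisely the content of Lemmas~\ref{lemma:prop} and~\ref{lemma:g}, which your sketch does not re-derive. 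The trade-off between the two routes is this: your product decomposition makes explicit a coupling that the paper leaves implicit — the two expectations involve $X^a\sim F^{Z_{n-k,n}}_X$ and $X^b\sim F^{U_H(n/k)}_X$ with different laws, while the Lipschitz bound in Condition~\ref{cond:assyp} holds at a fixed covariate value — but it also forces you to bound a signed-measure integral $\int g\,\dd\big(F^{Z_{n-k,n}}_X-F^{U_H(n/k)}_X\big)$ with $g(x)=\int\varphi(x,y)\,F^{U_H(n/k)}_{Y|X=x}(\dd y)$, something the paper never confronts directly. On the set $B$ your slowly-varying ratio argument (Karamata representation with $|\delta_{L_{Y|X}}(x,u)|<u^{-\epsilon}$ together with $Z_{n-k,n}/U_H(n/k)\xrightarrow{\pr}1$) is sound and does beat $\sqrt k$; on $B^c$ and for the exact rate of concentration it is Lemmas~\ref{lemma:prop} and~\ref{lemma:g}, not Lemma~\ref{FX-dist}, that you would need to reproduce, and the use of Condition~\ref{cond:slowly}'s second display is to dominate the log-ratio of slowly varying functions by the $\log(n/k)$-times-$\gamma$-difference term, not to suppress $B^c$ as you suggest.
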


We provide a final refinement of the central limit theorem where there no longer is a centering sequence. As is common in extreme value statistics, knowing the second-order behaviour of the conditional distributions is required.

\begin{condition}[Second-order condition]
    For $x\in B$
    \begin{align*}
        \lim_{t\rightarrow \infty} \frac{U_{F_{Y|X=x}}(ty)/U_{F_{Y|X=x}}(t)-y^{\gamma_F(x)}}{a_{X=x}(t)}=y^{\gamma_F(x)}h_{\rho(x)}(y), \quad y\geq 1
    \end{align*}
uniformly, with $a_{X=x}(t)$ positive or negative, regularly varying with index $\rho(x)\leq 0$ and satisfying $a_{X=x}(t)\rightarrow 0$ as $t\rightarrow \infty$ and
    \begin{align*}
        h_{\rho(x)}(y)=\begin{cases}
            \log (y) & \text{if $\rho(x)=0$} \\
            (y^{\rho(x)}-1)/\rho(x) & \text{otherwise}
        \end{cases}
    \end{align*}
    We furthermore require that $\max_x \rho(x)<0$ when $F_X$ is absolutely continuous.
    \label{cond:second-order}
\end{condition}

The last theorem can now be stated.

\begin{theorem}[CLT with second-order condition]
    Let $k=o(\log^b(n^a))$ for some $b<2,\,a>0$. Assume the same conditions as in Theorem \ref{THM:nlast}. Further assume that $a_{X=x}$ can be uniformly decomposed with $\min_x c(x)>0$, and that $U_{F_{Y|X=x}}$  satisfies the second order Condition \ref{cond:second-order}. Let $\sqrt{k}a_{X=x}(c_n(x)) \rightarrow \lambda(x)\geq 0$ with $c_n(x)=(1-F_{Y|X=x}(U_H(n/k)))^{-1} $ uniformly, and if $\lambda(x)>0$ assume additionally that $\frac{\partial}{\partial y}\varphi(x,y)$ is a.e. continuous in $y$ and that  $\sqrt{k}a_{X=x}(c_n(x))$  is eventually uniform bounded. Then 
    \begin{align*}
    \sqrt{k} \int\varphi \dd (F^{Z_{n-k,n}}-F^{\circ})\stackrel{\pr}{\to} \int \lambda(x)C(\gamma_F(x),\rho(x)) F^{\circ}_X(\dd x)
\end{align*}
    where 
    \begin{align*}
        C(\gamma_F(x),\rho(x))=\int_1^{\infty}x \big(\frac{\partial}{\partial y}\varphi\big)(x,y)h_{\rho(x)}(y^{1/\gamma_F(x)})F^{\circ}_{Y|X=x}(\dd y).
    \end{align*}
    \label{THM:last}
As a direct consequence
    \begin{align*}
        \sqrt{k} \int\varphi \dd (\amsmathbb{F}_{k,n}-F^{\circ})\stackrel{d}{\rightarrow} \mathcal{N}\Big(\int \lambda(x)C(\gamma_F(x),\rho(x)) F^{\circ}_X(\dd x),\sigma^2_{\varphi}\Big).
    \end{align*}  
\end{theorem}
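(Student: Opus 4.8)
The plan is to establish \eqref{THM35}-type reductions in two stages, moving from the random level $Z_{n-k,n}$ to the deterministic level $U_H(n/k)$ and then extracting the bias term. First I would exploit the conditional representation stated just before the decomposition theorem: conditionally on $Z_{n-k,n}=t$, the concomitant sample behaves like the starred sample with tail counterparts at level $t$. Writing $\sqrt{k}\int\varphi\,\dd(F^{Z_{n-k,n}}-F^\circ) = \sqrt{k}\int\varphi\,\dd(F^{Z_{n-k,n}}-F^{U_H(n/k)}) + \sqrt{k}\int\varphi\,\dd(F^{U_H(n/k)}-F^\circ)$, the first summand vanishes in probability by Theorem \ref{THM:nlast} (whose hypotheses are assumed), so the whole problem reduces to analysing the deterministic quantity $\sqrt{k}\int\varphi\,\dd(F^{U_H(n/k)}-F^\circ)$ and showing it converges to $\int\lambda(x)C(\gamma_F(x),\rho(x))F^\circ_X(\dd x)$.

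For that deterministic term I would split $F^t$ via $F^t(x,y)=F^t_{Y|X=x}(y)\,F^t_X(x)$ and correspondingly decompose the integral into a part coming from the evolution of the conditional distributions $F^t_{Y|X=x}\to F^\circ_{Y|X=x}$ (a Pareto law on $B$) and a part coming from the evolution of the covariate law $F^t_X\to F^\circ_X$. The covariate part should be the more delicate one, but the hypothesis $\max_x\rho(x)<0$ in the absolutely continuous case (Condition \ref{cond:second-order}), together with the Karamata control on the slowly varying functions from Condition \ref{cond:slowly} and the uniform decomposition of $a_{X=x}$, should make its $\sqrt k$-scaled contribution negligible, so that the surviving bias comes entirely from the conditional part. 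On $B$, set $t=U_H(n/k)$ and use the second-order Condition \ref{cond:second-order}: a Taylor expansion of $\varphi(x,\cdot)$ combined with $U_{F_{Y|X=x}}(c_n(x)y^{\gamma_F(x)})/U_{F_{Y|X=x}}(c_n(x)) - y^{\gamma_F(x)} = a_{X=x}(c_n(x))\,y^{\gamma_F(x)}h_{\rho(x)}(y)+o(a_{X=x}(c_n(x)))$ yields, after the change of variables $w=y^{1/\gamma_F(x)}$ and using $\sqrt{k}a_{X=x}(c_n(x))\to\lambda(x)$, precisely the integrand $\lambda(x)\,x\,(\partial_y\varphi)(x,y)h_{\rho(x)}(y^{1/\gamma_F(x)})$ defining $C(\gamma_F(x),\rho(x))$. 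The a.e.\ continuity of $\partial_y\varphi$ when $\lambda(x)>0$ is what licenses passing the limit inside, and the eventual uniform boundedness of $\sqrt{k}a_{X=x}(c_n(x))$ together with Condition \ref{cond:assyp} and Condition \ref{cond-full} gives a dominating function for interchanging limit and the $\dd x$-integral by dominated convergence.

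Once the deterministic convergence $\sqrt{k}\int\varphi\,\dd(F^{U_H(n/k)}-F^\circ)\to\int\lambda(x)C(\gamma_F(x),\rho(x))F^\circ_X(\dd x)$ is in hand, combining it with Theorem \ref{THM:nlast} gives the first display; the second display then follows immediately by writing $\sqrt{k}\int\varphi\,\dd(\amsmathbb{F}_{k,n}-F^\circ) = \sqrt{k}\int\varphi\,\dd(\amsmathbb{F}_{k,n}-F^{U_H(n/k)}) + \sqrt{k}\int\varphi\,\dd(F^{U_H(n/k)}-F^\circ)$ and applying Slutsky, since the first piece is asymptotically $\mathcal{N}(0,\sigma_\varphi^2)$ by Theorem \ref{THM:nlast} and the second is the deterministic bias.

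I expect the main obstacle to be controlling the covariate-distribution part uniformly: one must show that the reweighting of $F^t_X$ toward $F^\circ_X$, described in Lemma \ref{FX-dist}, happens fast enough that its $\sqrt k$-magnified contribution is $o(1)$, which is exactly why Condition \ref{cond:slowly} couples the rate $\log(n/k)$ to the gaps $|1/\gamma_F(x^0)-1/\gamma_F(x^1)|$ and why the requirement $\max_x\rho(x)<0$ is imposed in the absolutely continuous case — without these, the covariate drift could contribute a second, uncontrolled bias term at the $\sqrt k$ scale. Establishing that this does not happen, uniformly in $x$, via the uniform Karamata/Potter machinery of Appendices \ref{sec:potter} and \ref{sec:segers}, is the technical heart of the argument.
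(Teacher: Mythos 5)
Your plan is conceptually sound but follows a genuinely different decomposition than the paper. You write
$\sqrt{k}\int\varphi\,\dd(F^{Z_{n-k,n}}-F^\circ)
 =\sqrt{k}\int\varphi\,\dd(F^{Z_{n-k,n}}-F^{U_H(n/k)})
 +\sqrt{k}\int\varphi\,\dd(F^{U_H(n/k)}-F^\circ)$,
dispose of the first piece via Theorem~\ref{THM:nlast}, and then analyse the second, \emph{deterministic}, piece by splitting it into a conditional-drift part (giving the bias via Condition~\ref{cond:second-order}) and a covariate-drift part (argued to vanish). The paper instead works directly on $\sqrt{k}\int\varphi\,\dd(F^{Z_{n-k,n}}-F^\circ)$ inside a single conditional expectation, inserting the cross term $\varphi\bigl(X^{\circ},U_{F_{Y|X=X^{\circ}}}(y\xi)/U_{F_{Y|X=X^{\circ}}}(y)\bigr)$ with the \emph{random} level $y=U_{F_{Y|X=X^{Z_{n-k,n}}}}^{\leftarrow}(Z_{n-k,n})$: the first difference (which changes $X^{Z_{n-k,n}}\to X^{\circ}$ in both arguments) absorbs all covariate drift and is killed by Lemma~\ref{segers} and the machinery of Theorem~\ref{THM:nlast} together with $\gamma_F(X^\circ)=\gamma_F^U$ a.s.; the second difference is handled by the mean value theorem, Lemma~\ref{lemma:segers2}, Potter bounds on $a_{X=x}$, Smirnov's lemma for $Z_{n-k,n}/U_H(n/k)$, and dominated convergence. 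Your route has the appeal that the bias extraction is fully deterministic (so $\sqrt{k}\,a_{X=x}(c_n(x))\to\lambda(x)$ applies directly without Smirnov), but it requires you to establish that the covariate-drift contribution in $\sqrt{k}\int\varphi\,\dd(F^{U_H(n/k)}-F^\circ)$ is $o(1)$, which is a real step you only sketch; the needed deterministic control (e.g.\ $\sqrt{k}\,\E[\gamma^U-\gamma(X^{U_H(n/k)})]\to 0$ via Lemmas~\ref{lemma:prop} and~\ref{lemma:g} and Condition~\ref{cond:slowly}) is already implicit in the proof of Theorem~\ref{THM:nlast}, so this can be filled in. One misattribution to correct: the hypothesis $\max_x\rho(x)<0$ is \emph{not} there to control covariate drift --- it enters only in Lemma~\ref{lemma:segers2}, where it makes the envelope $(y^{\rho(x)}-1)/\rho(x)\le -1/\rho^*$ uniformly bounded over $x$, yielding the uniform second-order Segers bound that powers the bias computation. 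Covariate drift is controlled by Condition~\ref{cond:slowly}, the rate $k=o(\log^b(n^a))$, and the uniform Karamata/Potter machinery.
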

Note that once again the result depends on the limit distribution of $X$. The distribution can be calculated according to Lemma \ref{FX-dist}.

\section{Applications}
\label{sec:application}

This section provides two numeric studies on simulated data. As the data generating mechanism for $Y|X$, we use the Burr distribution with density
\begin{align*}
    f(y|x)=\frac{\kappa(x)\tau(x) y^{\tau(x)-1}}{(1+y^{\tau(x)})^{\kappa(x)+1}},
\end{align*}
which is a popular generalization of the Pareto distribution, having significant bias at lower quantiles. The distribution of $X$ is uniform on $[1,5]$. Concerning the censoring mechanism, we let $C=C_11\{C_1<100\}+C_21\{C_1\geq 100\}$, where $C_1,C_2$ are independent Pareto distributions with tail indices given by
\begin{align*}
    &\gamma_{C_1}(x)=0.75+3 \phi((x-2)/0.1)+3 \phi((x-4)/0.1) \\
    &\gamma_{C_2}=14
\end{align*}
where $\phi$ is the density for a standard normal variable. 
Similarly, the tail index for $Y$ is
\begin{align*}
\gamma_F(x)=0.5+2\phi((x-2)/0.1)+2\phi((x-4)/0.1).
\end{align*}
The parameters we have chosen for the Burr distribution of the conditional response variable are
\begin{align*}
    \kappa(x)=1, \quad \tau_F(x)=1/\gamma_F(x).
\end{align*}
Through this construction we guarantee that $C$ is independent of $X$ in the tail. Moreover, our construction allows for $C$ to depend on $X$ at lower quantiles. However, if we were to only use $C_2$, then there would only be a minimal of non-large observations, which would be censored.
The functions $\gamma_F(x)$ and $\gamma_{C_1}(x)$ are plotted in Figure \ref{fig:gamma}. By the choice of $\gamma_F$ we have tail $\gamma_F(2)=\gamma_F(4)=\gamma_F^U$. In other words, the asymptotic covariate distributions is degenerate on two points, with $\pr(X^{\circ}=2)=\pr(X^{\circ}=4)=0.5$. The model setup and the $\varphi$ functions considered below also satisfy all the  assumptions for the asymptotics of this paper, as is verified in Appendix \ref{sec:Burr}.
\begin{figure}[htbp]
    \centering
        \includegraphics[width=0.45\textwidth]{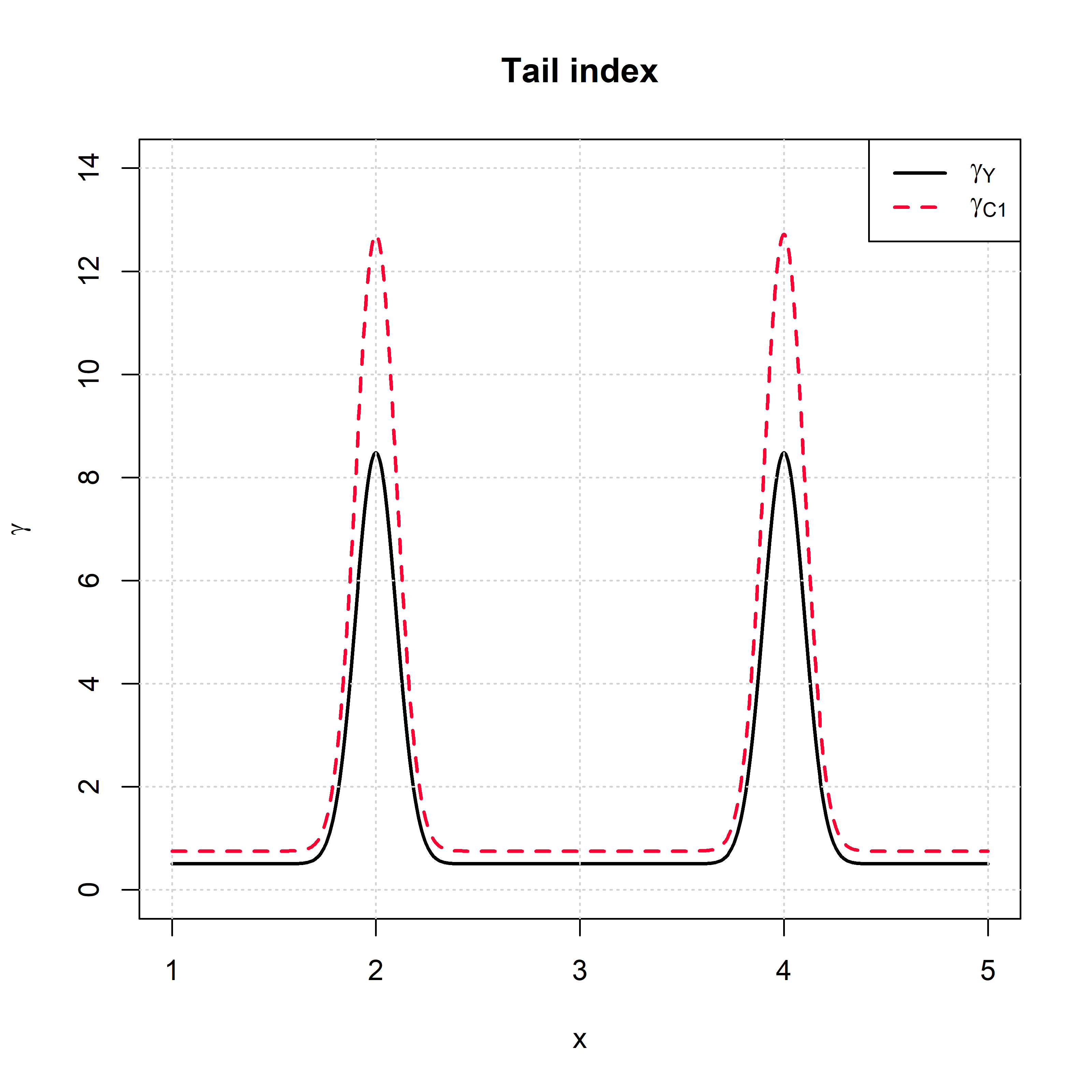}
    \caption{Tail index functions for the conditional response and censoring mechanism employed in the simulation study.} 
    \label{fig:gamma}
\end{figure}

Finally, we set $Z=\min\{Y,C\}$ and $\delta=1\{Y\leq C\}$, and generate an i.i.d. sample of $n=10^5$ observations for the simulation studies.

\subsection{Finding homogeneous and catastrophic extremes}
\label{sec:Finding homogeneous and catastrophic extremes}

One of the applications of our results is to find which covariates produce the highest tail index. This is an useful way to estimate under which conditions extremes happen, and identify regions homogeneous covariate regions which lead to the most catastrophic extremes, that is the ones having the largest tail index. In terms of extreme Kaplan--Meier integrals, we consider
\begin{align*}
    \varphi_j(x,y)=1\{x\in R_j\}
\end{align*} 
for $(R_j)$ a collection of regions in the covariate space. In this case we have that $S_{k,n}(\varphi_j)$  converges to $\pr(X^{\circ}\in R_j)$. Thus, $S_{k,n}(\varphi_j)$ will inevitably go to zero when $R_j$ does not contain the maximal tail index. 
In Figure \ref{fig:hom_dis} we investigate the regions $R_1=(1.8,2.2)$ and $R_2=(1.0,1.4)$. In addition to the EKMI estimator, we have included a naive estimator to benchmark our results, given by
\begin{align*}
N_j(k)=\frac{\#\text{$Z_i$ larger than $k$ and $X_i\in R_j$}}{\#\text{$Z_i$ larger than $k$}},\quad j=1,2,
\end{align*}
which does not take into account that the $Z_i$ are right censored, and the information coming from $\delta_i$ is discarded. Nonetheless, since the $\varphi_j$ functions do not have a $y$ component, and the dependence structure between $X$ and $C$ happens in the body of the distribution and not in the tail, we expect the two approaches to be asymptotically equivalent.
\begin{figure}[!htbp]
    \centering
    \subfigure[$R_1=(1.8,2.2)$]{
        \includegraphics[width=0.45\textwidth]{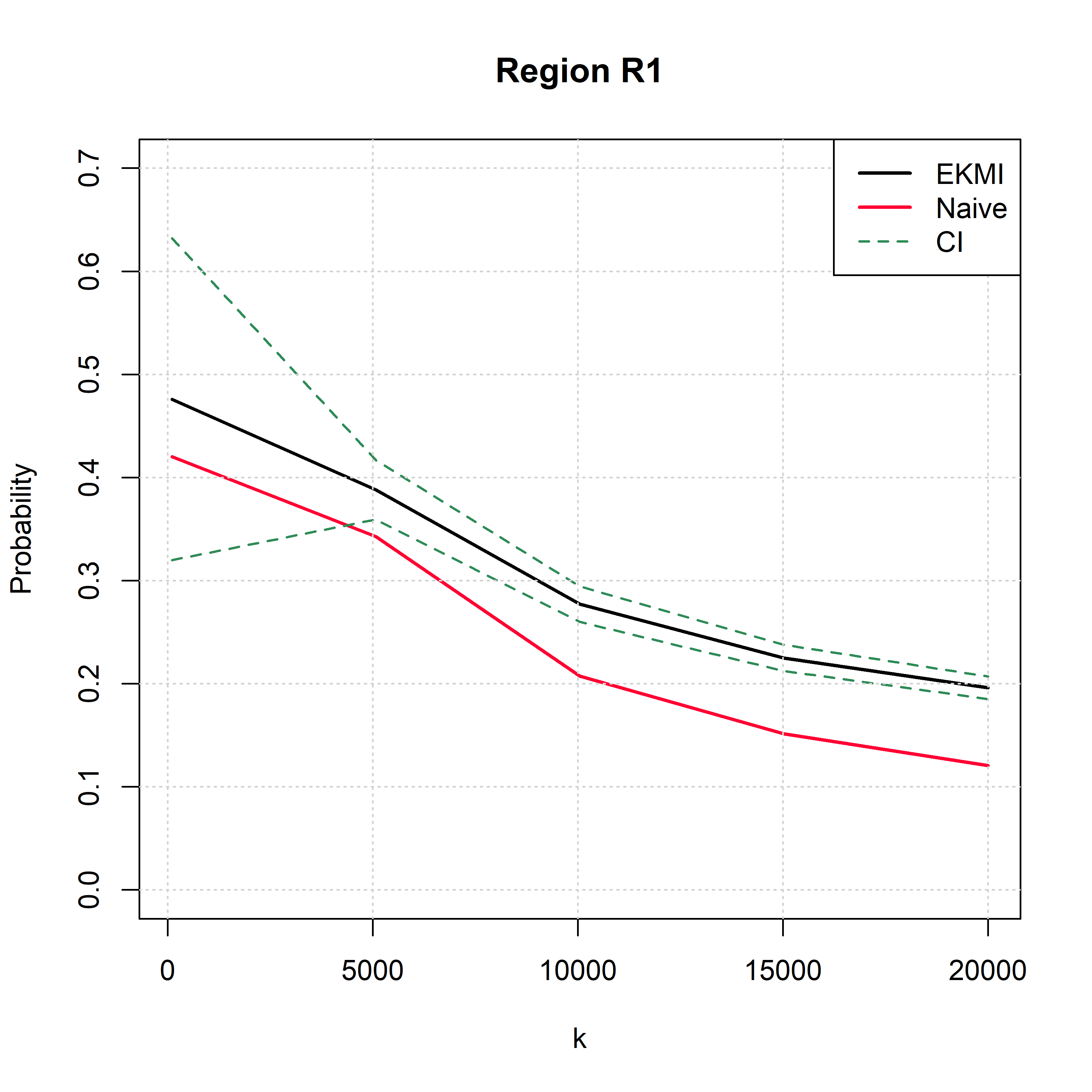}
    }
    \subfigure[$R_2=(1.0,1.4)$]{
        \includegraphics[width=0.45\textwidth]{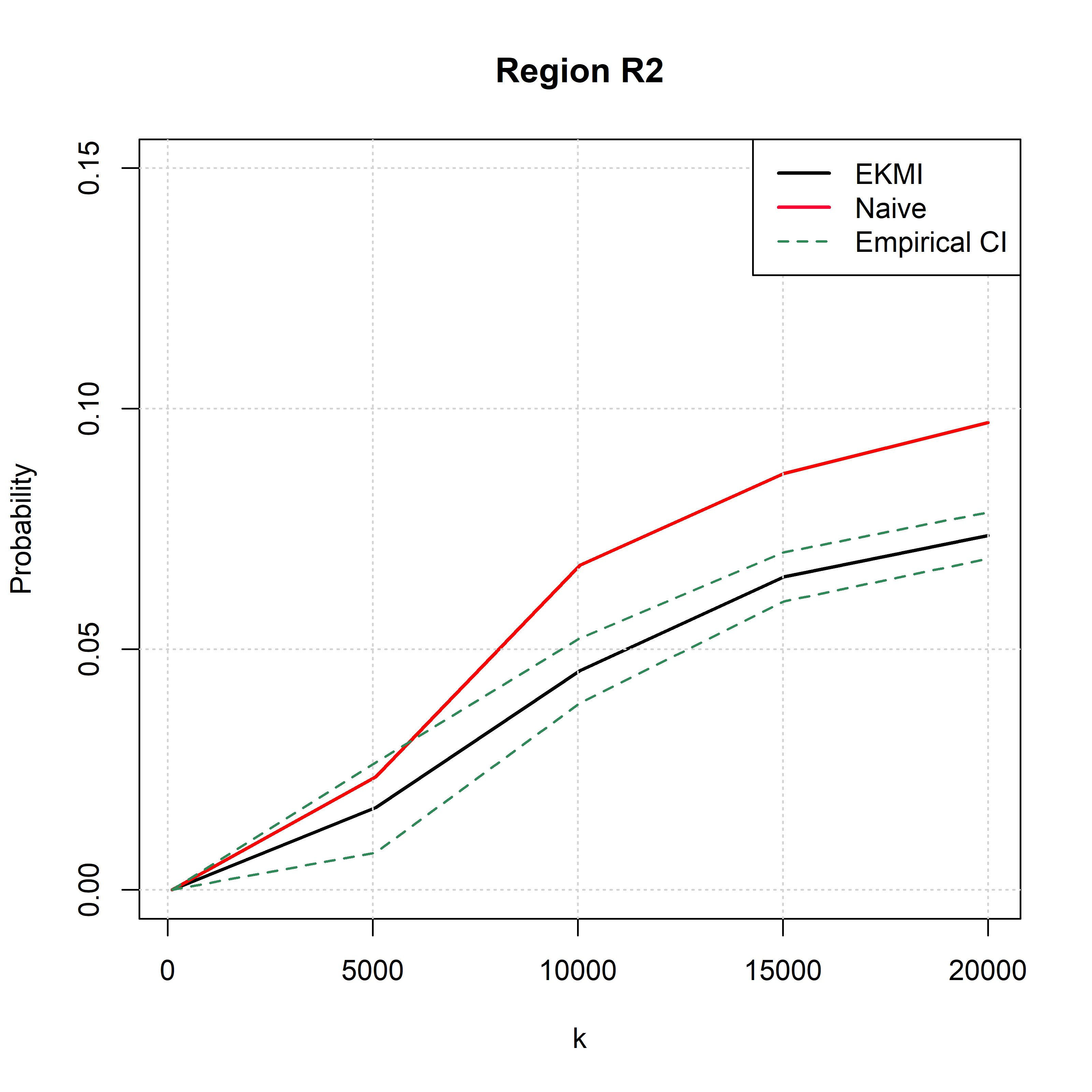}
    }
    \caption{Estimators of the asymptotic probability of  a catastrophic event belonging to the covariate regions $R_1$ (left) and $R_2$ (right). By construction, the former to converges to $0.5$, while the latter converges to $0$, as $k\to0$. In dashed lines we have the $95\%$ confidence intervals, computed through the Gaussian approximation and using plug-in estimators for the asymptotic variance.}
    \label{fig:hom_dis}
\end{figure}

Nonetheless, in Figure \ref{fig:hom_dis} we show that the EKMI estimators outperform the naive estimator in finite samples and for all quantiles, and we indeed observe how the naive estimator slightly closes the gap as $k$ becomes small. This outcome also depends on the censoring degree. For instance, if the censoring is light for small observations and heavy for large observations, then the EKMI estimators are still better performing, though to a lesser degree. In essence, by the construction mechanism of the KM product-limit estimator, a significant proportion of large values can potentially be discarded -- due to censoring -- while retaining smaller observations. In conclusion, the EKMI estimator always detects the correct covariates asymptotically, and in finite samples outperforms the naive estimator across the board, though the distance between the two estimators can vary according to the distribution of the censoring mechanism.

\subsection{Estimating the largest tail indices within regions}
\label{sec:Estimating the largest tail indices within regions}

Another useful application of EKMI estimators is to consider $\varphi$ functions which can detect not only the region where catastrophic events happen, but also the maximal tail index within that region. In other words, locally estimating the maximal tail index through the covariate space. The viewpoint of regions is in contrast to other approaches in the literature, which often focus on conditional tail index estimation. The aim of the procedure is similar to the previous subsection. However, instead of testing different regions, we estimate the tail index continuously and in that way get an understanding of how the tail index depends on the covariate. For this purpose we use a Gaussian kernel to smooth the regional effects. Concretely, we let
\begin{align*}
    \varphi^{a}_1(x,y)= \phi\left(\frac{x-a}{h}\right)\log(y),\quad \varphi^{a}_2(x,y)=\phi\left(\frac{x-a}{h}\right),
\end{align*}
where $\phi$ is a standard Gaussian kernel (density). Then the family of estimators is the given by
\begin{align*}
\hat\gamma_{n,k}(a)=\frac{S_{n,k}(\varphi^{a}_1)}{S_{n,k}(\varphi^{a}_2)}.
\end{align*}

\begin{figure}[!htbp]
    \centering
    \subfigure[\label{fig:h-0.05}]{
        \includegraphics[width=0.45\textwidth]{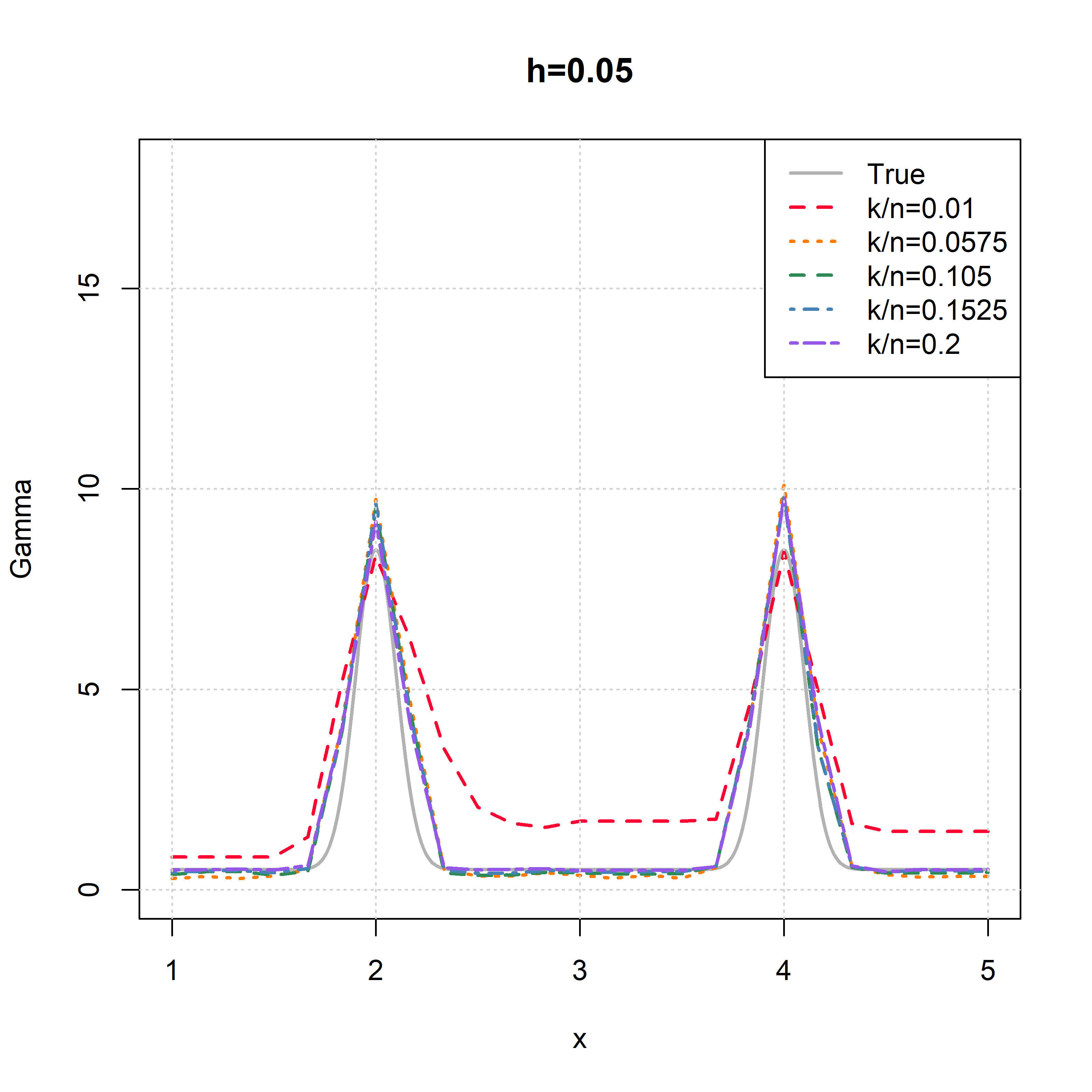}
    }
    \subfigure[\label{fig:h-0.1}]{
        \includegraphics[width=0.45\textwidth]{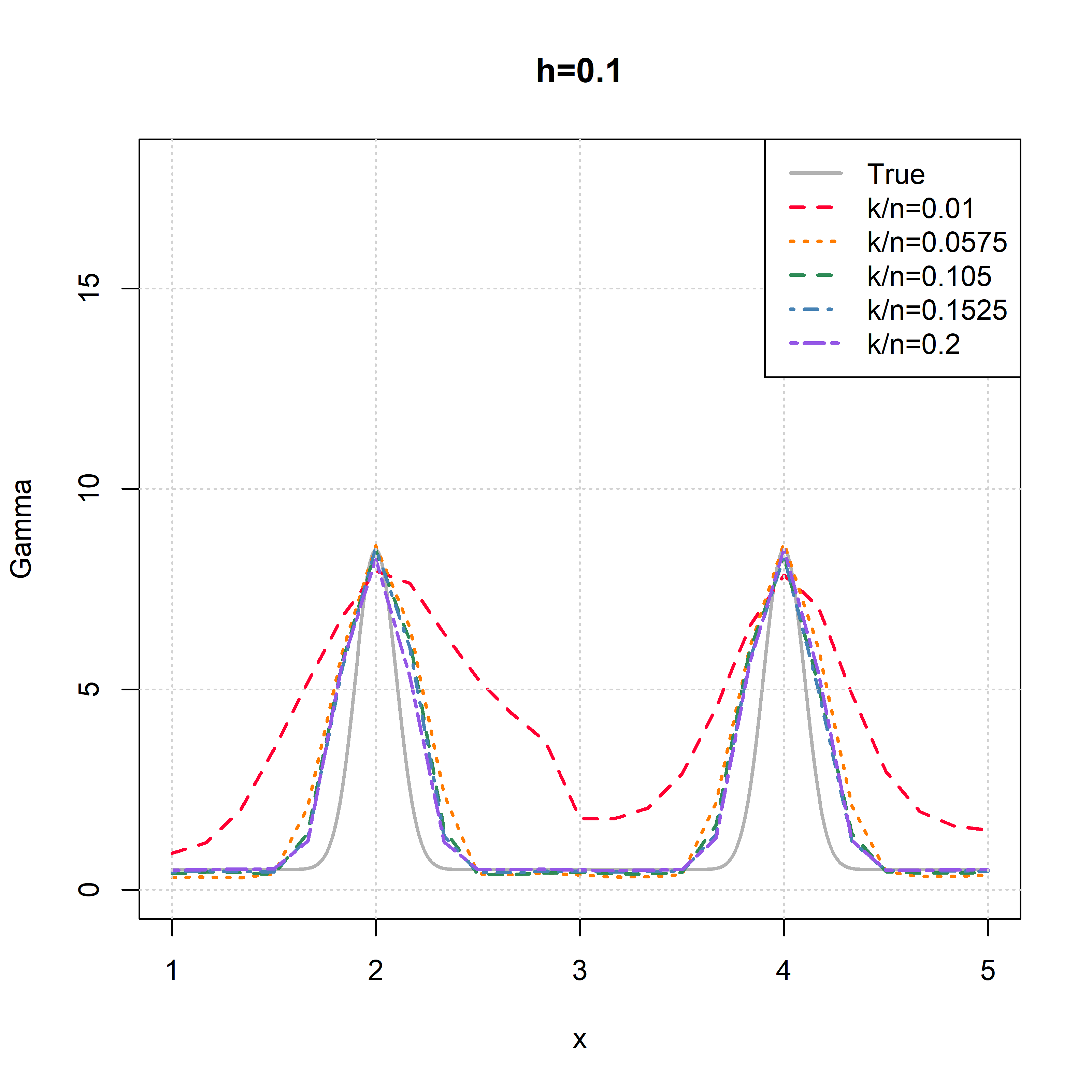}
    }
    \subfigure[\label{fig:h-0.5}]{
        \includegraphics[width=0.45\textwidth]{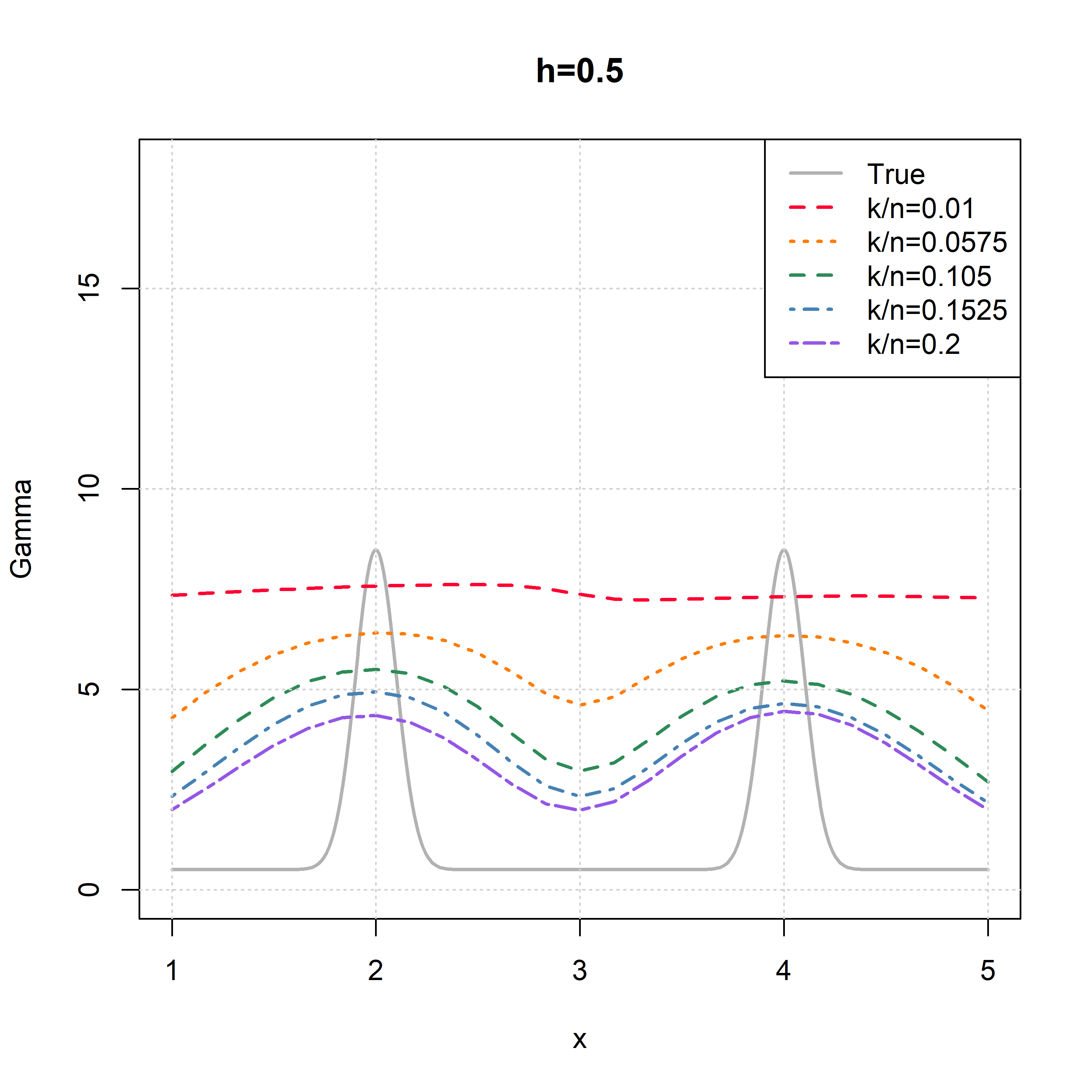}
    }
    \subfigure[\label{fig:h-opt}]{
        \includegraphics[width=0.45\textwidth]{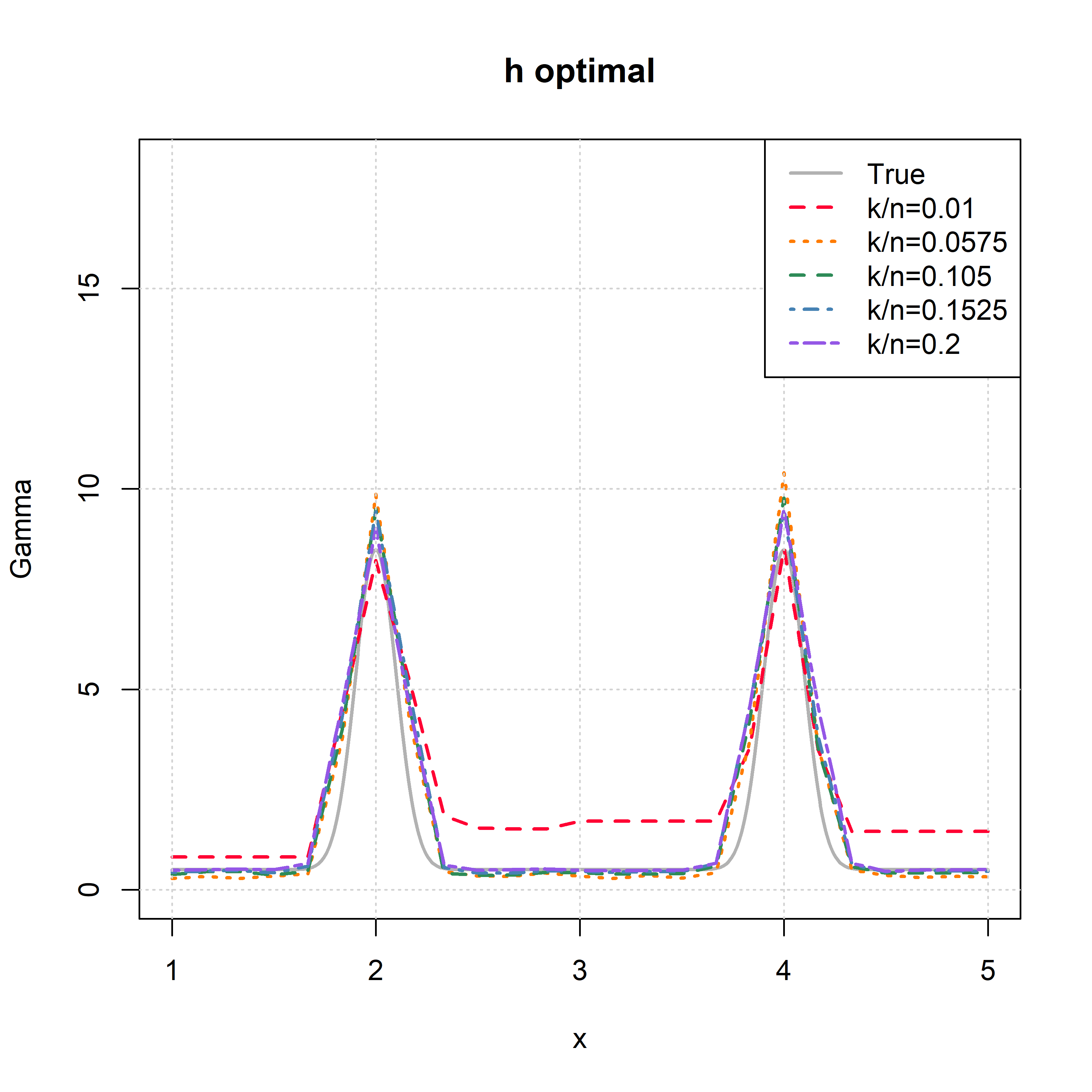}
    }
    \caption{Estimation of the tail index within different regions, using a Gaussian kernel to smooth the EMKI estimators, using three pre-selected bandwidths as well as the automatically selected bandwidth based only on covariate information.}
    \label{fig:kernel}
\end{figure}

In Figure \ref{fig:kernel} we show the estimation results in four plots corresponding to different bandwidths. In particular, we observe that for a large bandwidth  (Figure \ref{fig:h-0.5}) the estimators naturally have difficulties detecting the covariates which produces the highest tail index, however the estimated tail index still corresponds to the correct $\gamma_F^U$. Choosing a smaller bandwidth improves the detection of the regions with largest tail indices. 

Choosing $\varphi$ depending in $n$ has not been considered in the asymptotic theory, and would be of interest if the aim would be to obtain conditional tail index estimation. Since our aim is instead to describe a possible smoothing mechanism (instead of choosing regions, which is also possible), we delegate the case where $h$ depends on $n$ to future research. For sake of completeness, however, we prescribe automatic bandwidth selection according to the different standard approaches on the covariate information alone, that is, on the sample $(X_{[n-i+1,n]})_{i=1,\dots,k}$, for each $k$. This can for instance be done with the function \texttt{density} from the \texttt{R}-package \texttt{stats}. The results are depicted in Figure \ref{fig:h-opt}, where we observe that even $k/n=0.2$, produces accurate results.

\section{Real data analysis}
\label{sec:data}
This section illustrates how our methods can successfully applied to real-life data which exhibits censoring and heavy-tailed effects. Two datasets are considered, from the insurance industry, and from the a cancer study, respectively.
\subsection{Description of datasets}
\subsubsection{Insurance data}
The first dataset consists of $109,992$ observations of claim settlements of a French insurer from $1992$ to $2007$, and is publicly available through the dataset \texttt{freclaimset3dam9207} in the \texttt{R}-package \texttt{CASdatasets}. Some observations are right-censored since the claims are not fully settled. In other words, the payment of a non-settled (or open) claim might increase in subsequent years. Note that instead of taking calendar time as the response, we are taking the claim settlement (paid amount) as response. This approach has been increasingly popular for claim size modeling, see for instance the insurance data analysis of \cite{bladt2020threshold,bladt2021trimmed}.

For the analysis we concentrate on claims arriving at or before the year $2000$. This enables us to compare our estimation procedure with the fully uncensored estimates, since all the observations from before $2000$ ended up being fully settled by $2007$, which is the last observed year in the dataset. In Figures \ref{fig:fre-tail} and \ref{fig:fre-prop} we show, respectively, the estimation of the tail index (without adaptation for censoring) and the proportion of observations that are censored as a function of $k$. We observe a large bias in the tail index, though the index does not seem to vanish even for very small values of $k$. In the dataset, we have the additional covariate ``Risk Categ'', comprising of unkown risk categories and having $6$ different levels: $C1,C2,C3,C4,C5$ and $C6$. 

\subsubsection{Cancer data}

The second dataset consists of $15,564$ observations from colon cancer patients, and is also publicly available,  through the dataset \texttt{colonDC} from the \texttt{R}-package \texttt{cuRe}. This dataset pertains to a more classical domain of applications of survival analysis. We further concentrate at the subset where the covariate ``stage'' is equal to ``distant'', which leaves us with $5147$ observations. Among other things, the dataset includes follow-up time and a status indicator indicating whether a patient is alive or dead. We are interested in estimating the tail index of the lifetime of the patients, and hence follow-up time is used as our variable of interest. The data is clearly right-censored: if the patient is alive then the observation is censored. In Figures \ref{fig:colon-tail} and \ref{fig:colon-prop} we show, respectively, the estimation of the tail index and the proportion of observations that are censored as a function of $k$. Note that Condition \ref{cond-full} requires that the censoring mechanism is not too heavy (in particular less than 50\% censored observations). So we expect the EKMI estimators to become unreliable when $k$ is below $500$. As covariate information we use ``age'', indicating  the age of diagnosis for each patient. 

\begin{figure}[htbp]
    \centering
    \subfigure[Hill plot for insurance data\label{fig:fre-tail}]{
        \includegraphics[width=0.45\textwidth]{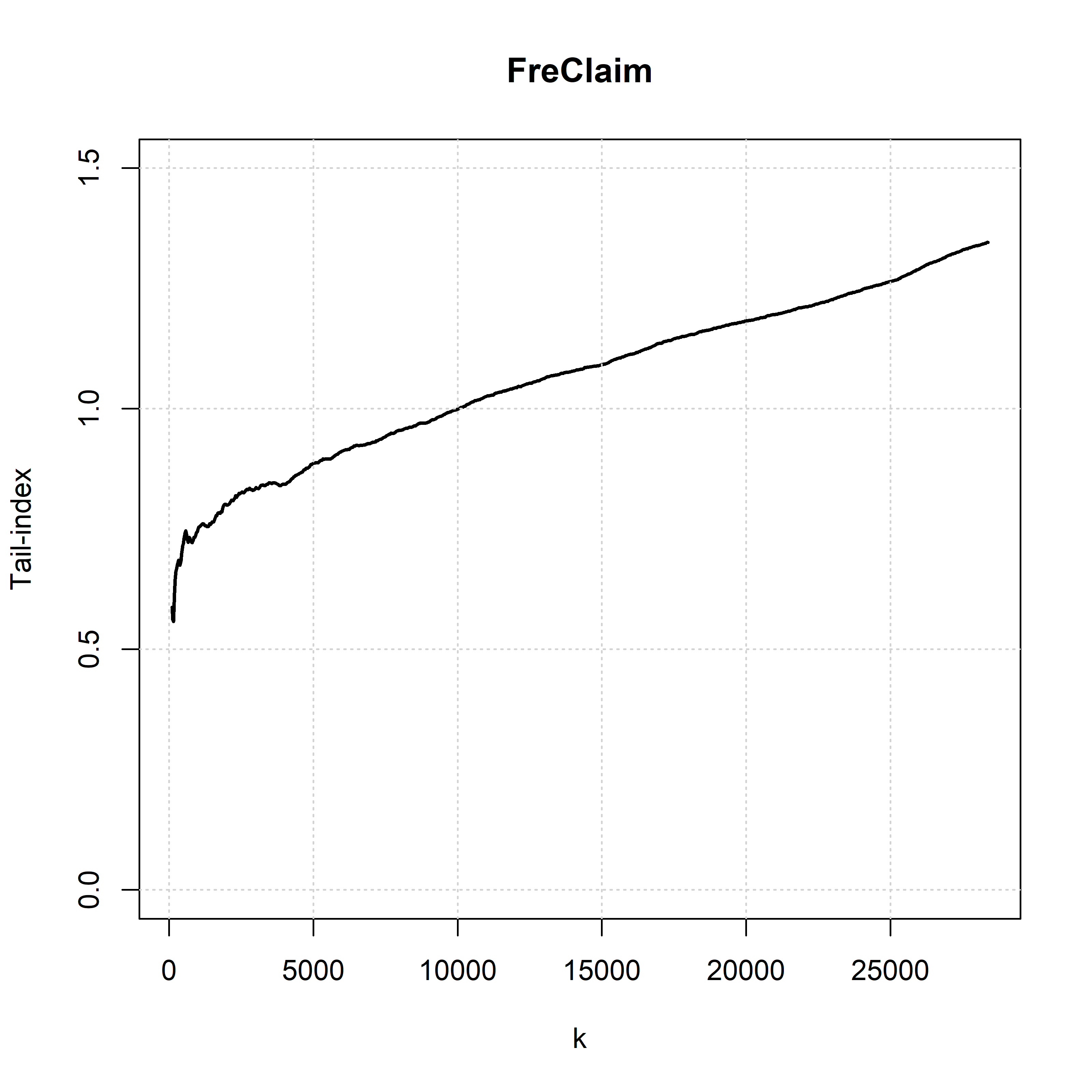}
    }
    \subfigure[Censored proportion for insurance data\label{fig:fre-prop}]{
        \includegraphics[width=0.45\textwidth]{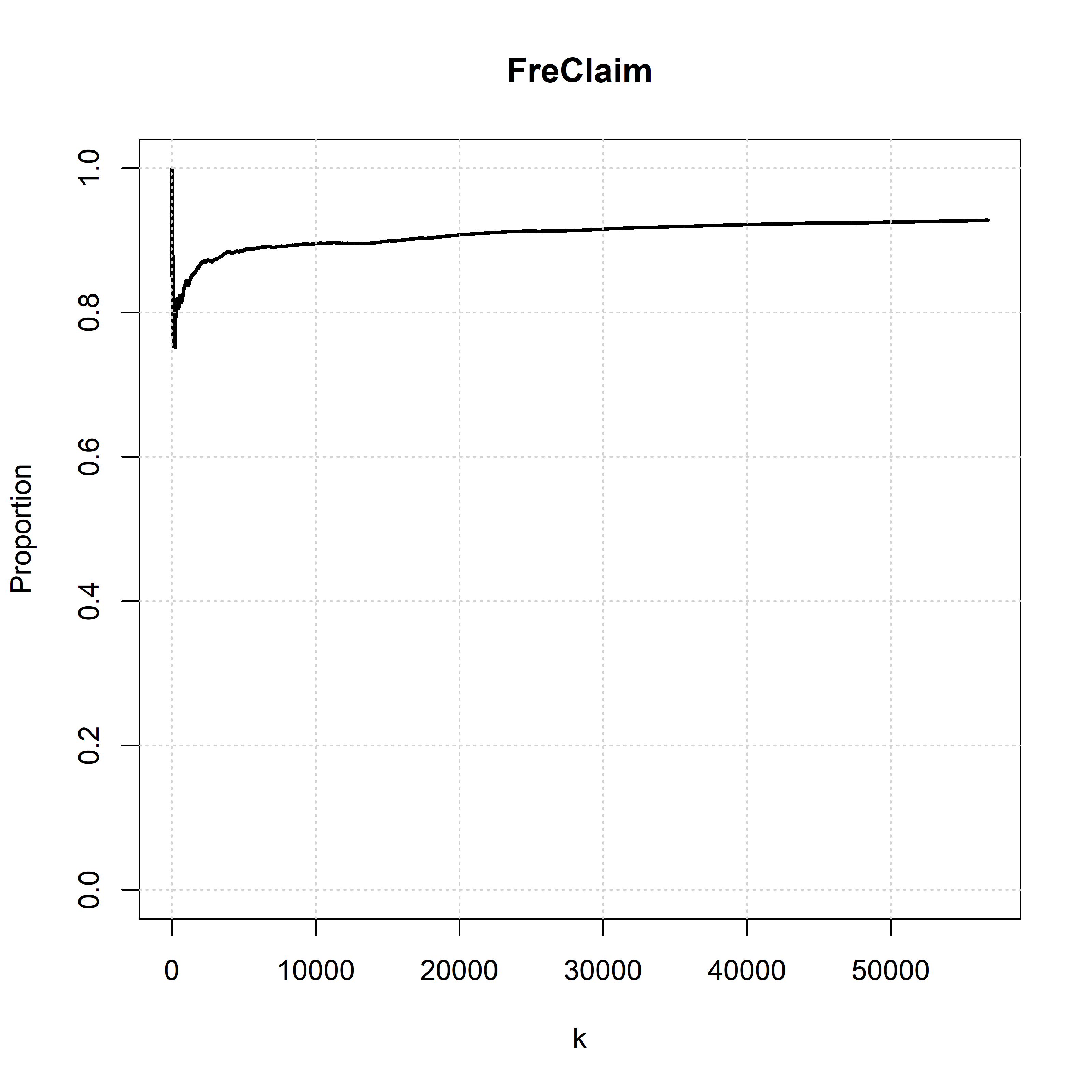}
    }
    \subfigure[Hill plot for cancer data\label{fig:colon-tail}]{
        \includegraphics[width=0.45\textwidth]{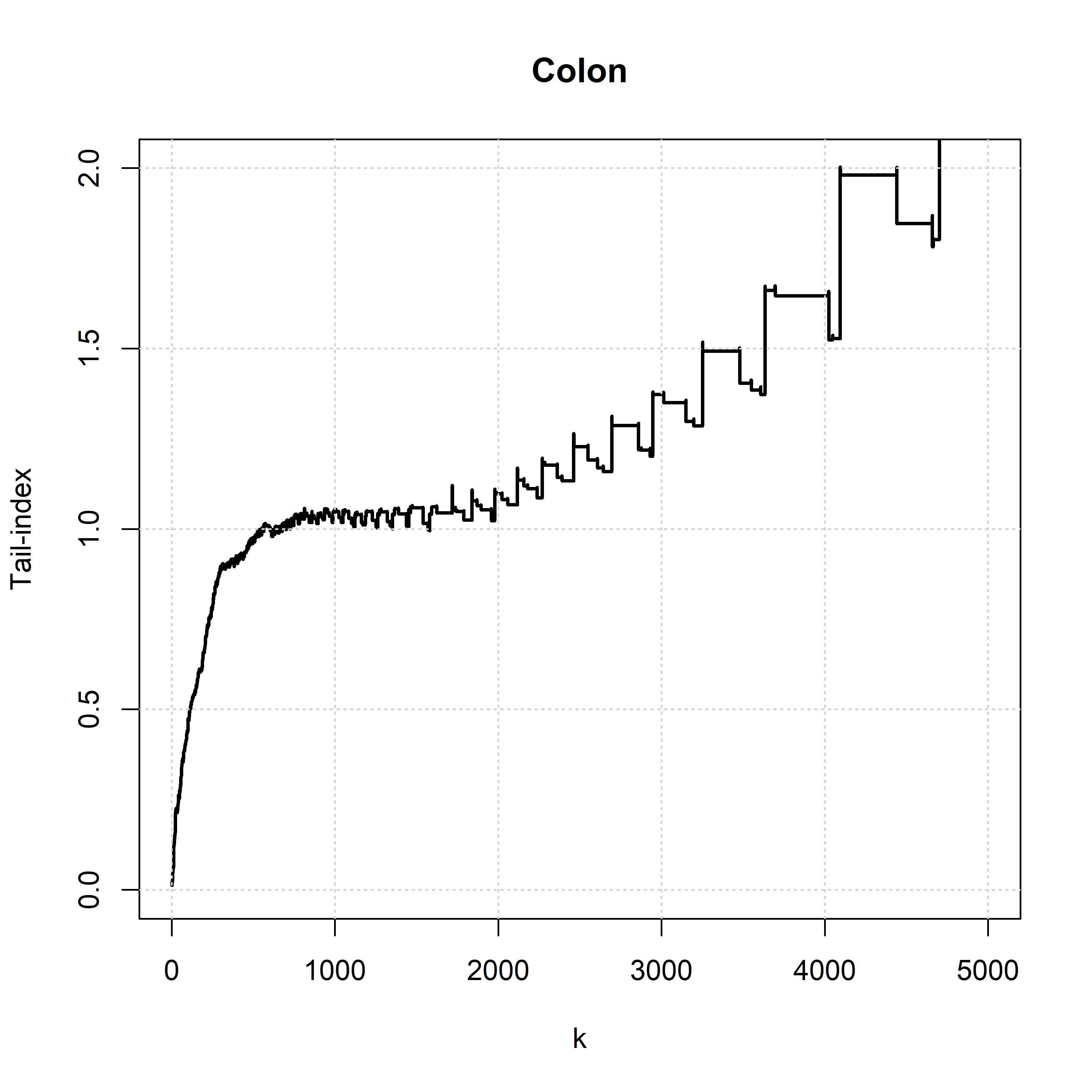}
    }
    \subfigure[Censored proportion for cancer data\label{fig:colon-prop}]{
        \includegraphics[width=0.45\textwidth]{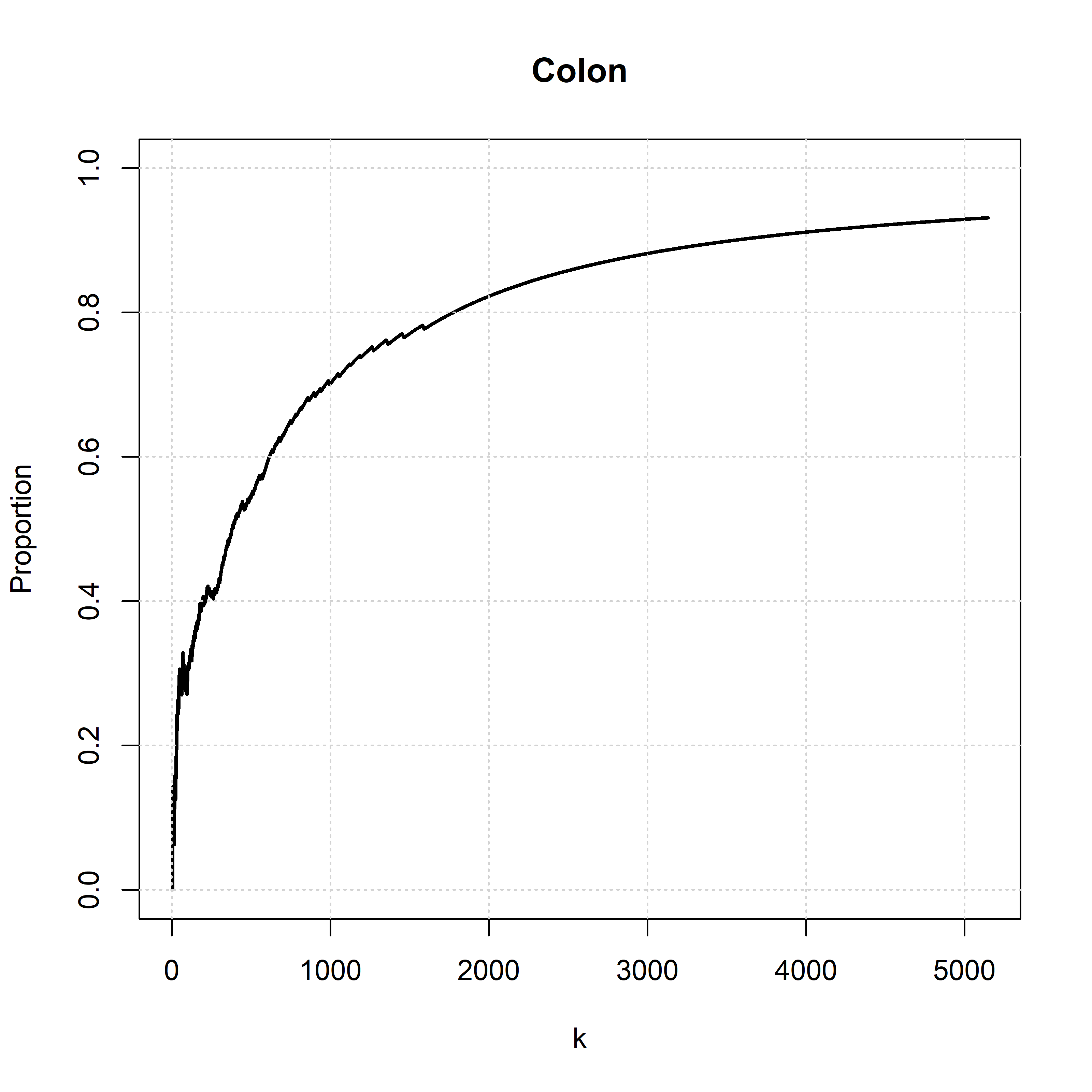}
    }
    \caption{Descriptive analysis of the two datasets, at the aggregate level, confirming a positive tail index, and a proportion of censoring mostly above $50\%$, which is required for asymptotic theory of the estimators to hold.}
    \label{fig:Fre}
\end{figure}

\subsection{Analysis: regions and tail indices}

\subsubsection{Insurance data}
For \texttt{freclaimset3dam9207} we first investigate if risk category $C1$ or $C6$ contribute most to its extremes values. This is the same setup as the simulation study of Section \ref{sec:Finding homogeneous and catastrophic extremes}. In Figure \ref{fig:fre-EKMI} we compare the EKMI estimate with the naive estimator \textit{computed using information up to 2007} (i.e. the final uncensored value of the censored observations). The two estimators are close, and it seems that both $C1$ or $C6$ contribute significantly to the occurrence of extreme values, though with a decreasing trend as $k$ decreases. Of note is that the confidence bands from the asymptotic theory seem to encompass the naive estimator for $R_1=C1$, but the $R_2=C6$ case is more contentious, suggesting that the censoring effects at lower thresholds are stronger for the latter region. Still, for smaller $k$ there is strong coherence is both cases, suggesting that the statistical model is adequate to model the evolution and censoring effects of the claim settlements. We now take into account all the risk categories, and since we have that the probabilities must sum up to $1$, we may plot the change of distribution as a function of $k$, as is done in Figure \ref{fig:fre-risk}. In terms of extreme values, it seems that $C2$ and $C3$ shrink completely for small $k$, indicating they are not risky categories. For the remaining categories, we observe remarkable stability and persistency over $k$. 

\begin{figure}[htbp]
    \centering
    \subfigure[Covariate distribution by risk category\label{fig:fre-risk}]{
        \includegraphics[width=0.45\textwidth]{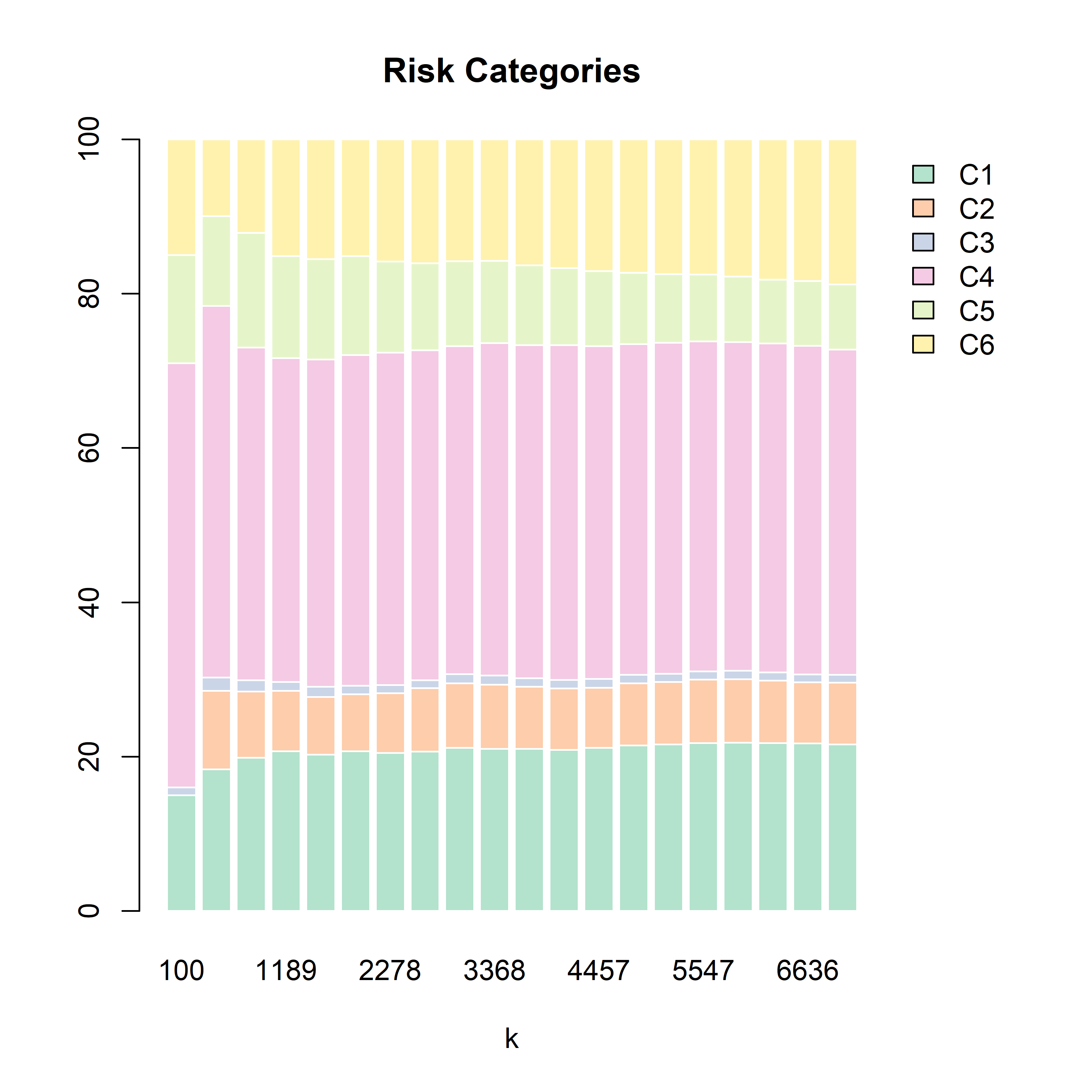}
    }
    \subfigure[EKMI versus naive estimation\label{fig:fre-EKMI}]{
        \includegraphics[width=0.45\textwidth]{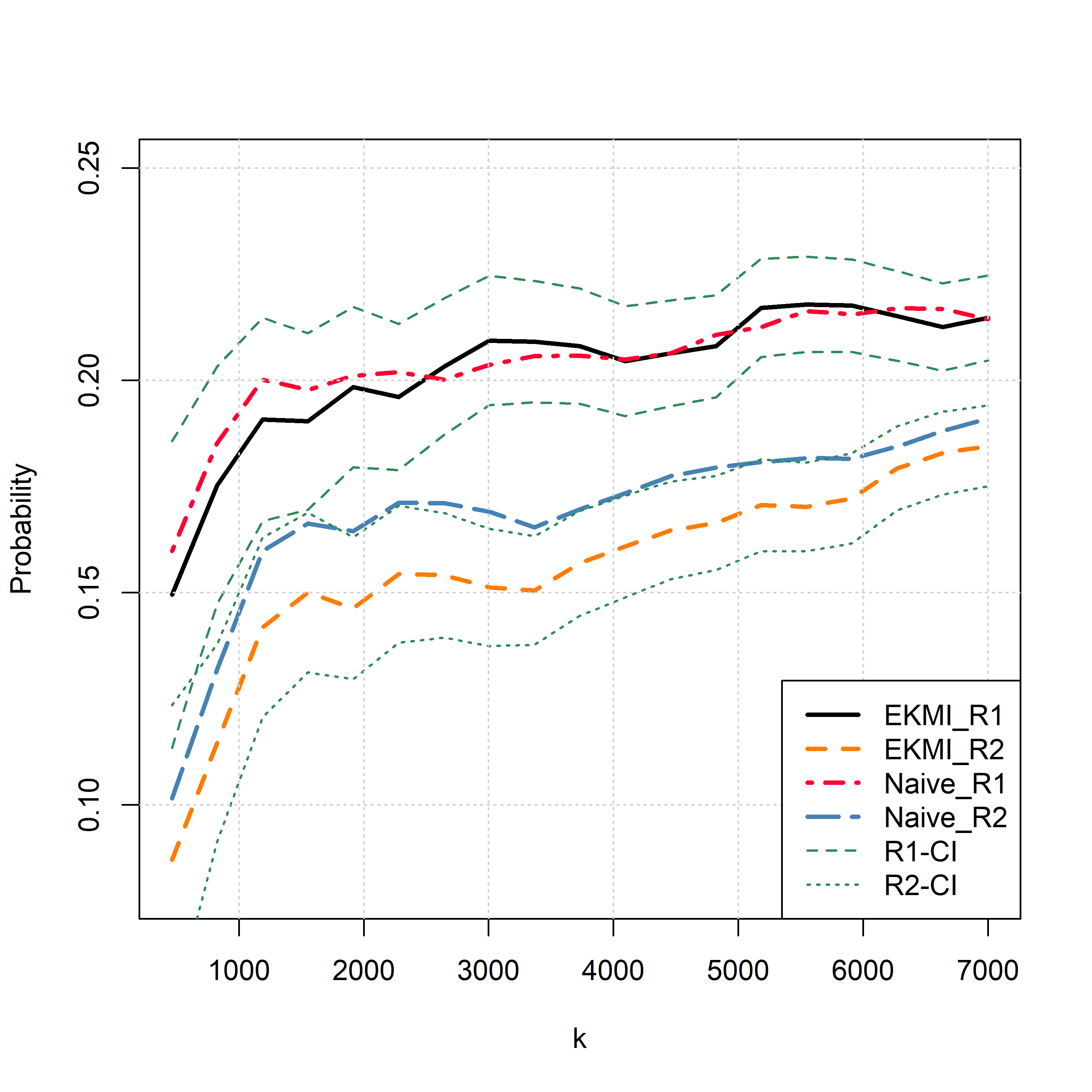}
    }
    \subfigure[Covariate distribution by age\label{fig:colon-age}]{
        \includegraphics[width=0.45\textwidth]{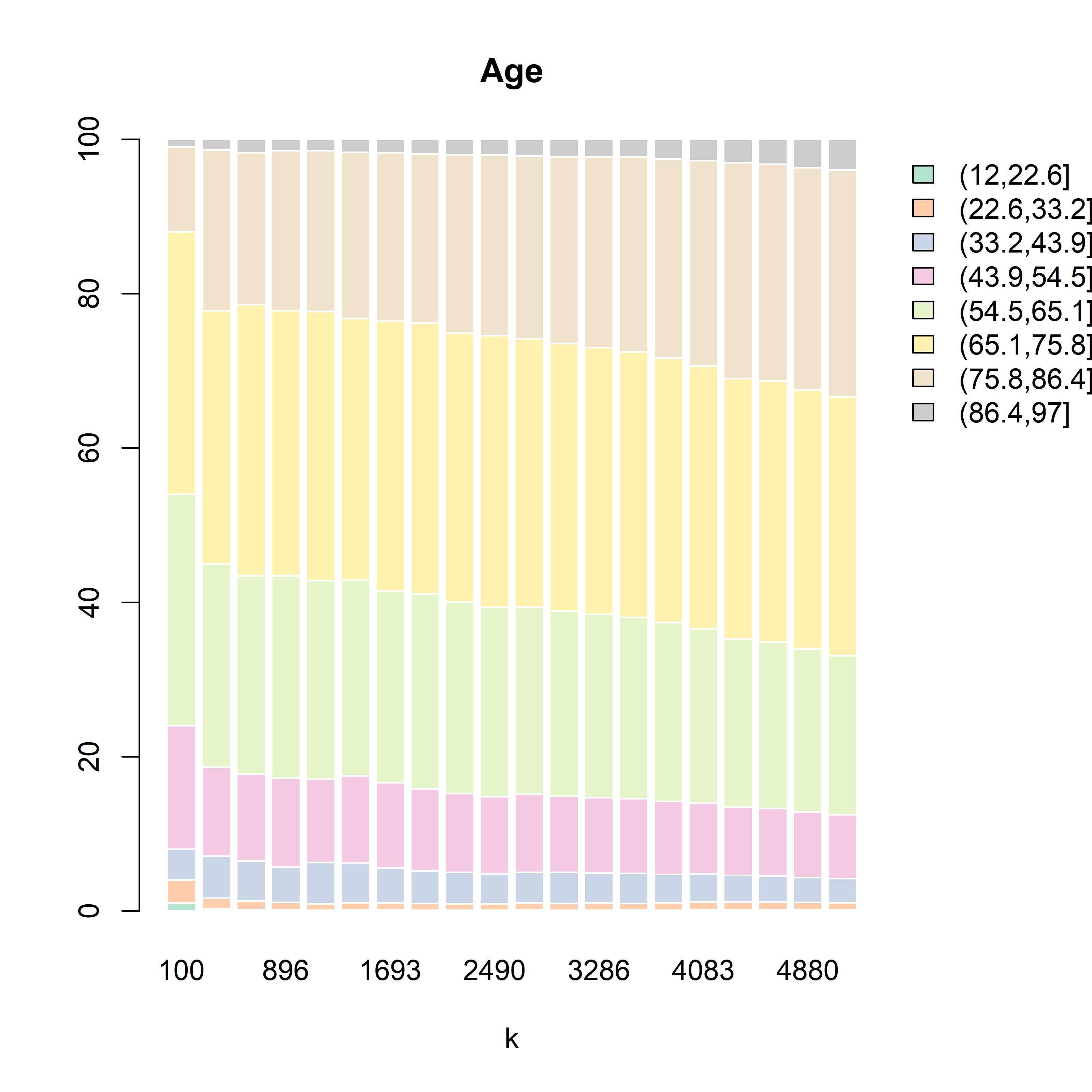}
    }
    \subfigure[Kernel smoothing of tail index\label{fig:colon-EKMI}]{
        \includegraphics[width=0.45\textwidth]{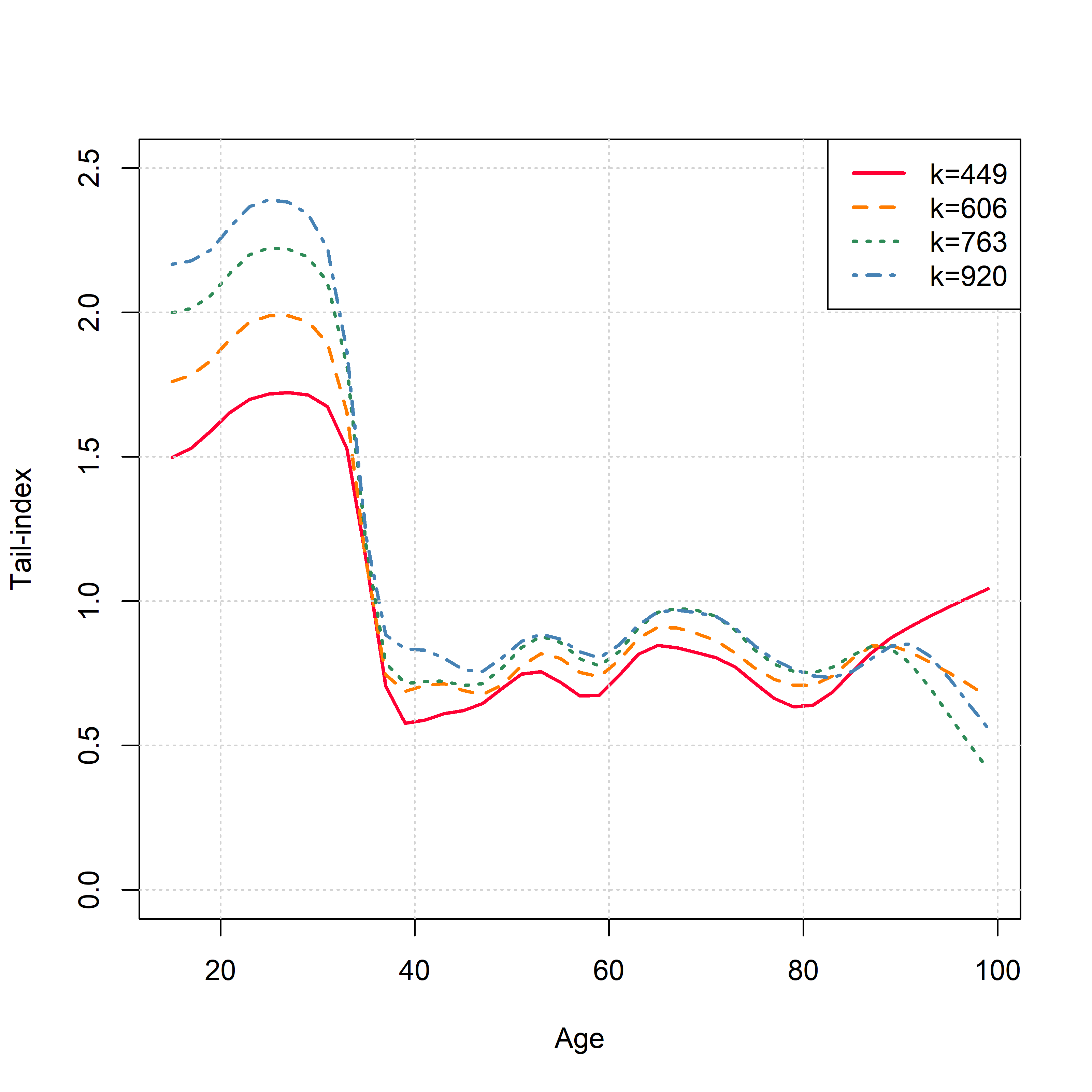}
    }
    \caption{Analysis of the insurance (top panels) and cancer (bottom panels) data.}
    \label{fig:analysis}
\end{figure}

\subsubsection{Cancer data}

For the analysis of \textit{colonDC}, we adopt the framework of Section \ref{sec:Estimating the largest tail indices within regions}, where we with to unveil the largest tail indices in different regions. We also wish to smooth the tail index for different ages through a Gaussian kernel as in the simulation study. The result can be seen in Figure \ref{fig:colon-EKMI}. We observe that patients younger than $40$ have a significant higher tail index, supporting the hypothesis that young patients get cured, so the potential time to death is significantly higher than that of older patients. Finally, we quantitatively observe in Figure \ref{fig:colon-age} that the patients with best prognosis are the youngest ones, as their covariate regions tend to expand as $k$ decreases, while the regions for older patients tend to shrink. This finding is also qualitatively in line with standard knowledge on cancer prognosis by age.

\section{Conclusion and outlook}\label{sec:conc}

This paper extends the findings of EKMI estimators by incorporating random covariates, for models in the Fréchet max-domain of attraction. The main mathematical contribution is a decomposition of the Extreme Kaplan--Meier integral along with a law of large numbers and three versions of the central limit theorem. In order to achieve said theorems, we require auxiliary uniform convergence versions of several pointwise results. To accomplish this, we have introduced natural conditions which allow for a transparent treatment with similar methods of proof as in the unconditional case. For instance, the proposed Burr model satisfies all conditions. Subsequently, we conduct simulation studies from the Burr model, where we investigate the use of EKMI estimators; we consider identification of regions where the tail-index are largest, and a smoothed estimation of maximal tail-indices across regions. Finally, we apply the two approaches to two datasets: from the insurance industry and from a cancer study, showcasing the applicability of EKMI estimators. 

The remaining two max-domains of attraction remain unexplored when covariates are present. Extending the results to those cases is difficult, though possible under further assumptions. This would give rise, for instance, to moment estimators of for tail indices across all three max-domains of attraction, when covariates are present. Finally, the connection between estimators introduced in this paper and conditional tail index models arises by considering $\varphi$ functions that are allowed to depend on the sample size $n$. Asymptotics of these objects are subject of future research.

\newpage
\bibliographystyle{imsart-nameyear} 
\bibliography{Heterogeneous.bib}       

\newpage
\appendix
\section{Uniform Potter bounds}\label{sec:potter}

We first recall the Karamata representation; namely, by Theorem B.1.6 and Remark B.1.7 in \cite{dehaan}, we have that for $\bar{F}_{Y|X=x}(y)\in RV_{-1/\gamma_F(x)}$ there exist measurable functions $\delta(x,s)$ and $c(x,t)$ such that
\begin{align*}
    \bar{F}_{Y|X=x}(y)=c(x,y)\exp\left(\int_1^y \delta(x,s)/s\,\dd s\right)
\end{align*}
where $c(x,y)\stackrel{y\to\infty}{\to} c(x) \in(0,\infty)$ and $\delta(x,y)\stackrel{y\to\infty}{\to} \delta(x)=-1/\gamma_F(x)$. 
\begin{theorem}[Uniform Potter bounds for the tail functions]
   Let $\bar{F}_{Y|X=x}(y)\in RV_{-1/\gamma_F}$ and $c$ and $a$ be uniformly convergent as $y\to\infty$. Further assume that $\inf_x c(x)>0$. Then for all $\epsilon_a>0$ and $\epsilon_c>0$ there exists $N$ such that for $t>N$ we have
    \begin{align*}
        (1-\epsilon_c)y^{-1/\gamma_F(x)-\epsilon_a}\leq \frac{\bar{F}_{Y|X=x}(ty)}{\bar{F}_{Y|X=x}(t)} \leq (1+\epsilon_c)y^{-1/\gamma_F(x)+\epsilon_a}
    \end{align*}
    for all $x\in \mathcal{X}$ and $y>1$.
    \label{thm:potter-F}
\end{theorem}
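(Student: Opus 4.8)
The plan is to start from the uniform Karamata representation recalled just before the statement, namely
\[
\bar F_{Y|X=x}(y)=c(x,y)\exp\!\Big(\int_1^y \delta(x,s)/s\,\dd s\Big),
\]
and to take the ratio at the two points $ty$ and $t$. The $c$-factors contribute $c(x,ty)/c(x,t)$, and the exponential factors contribute $\exp\!\big(\int_t^{ty}\delta(x,s)/s\,\dd s\big)=\exp\!\big(\int_1^{y}\delta(x,ts)/s\,\dd s\big)$ after the substitution $s\mapsto ts$. So the whole ratio splits into a ``$c$-part'' and a ``$\delta$-part'', and I would bound each separately and uniformly in $x$.

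First I would handle the $c$-part. By hypothesis $c(x,y)\to c(x)$ uniformly in $x$ and $\inf_x c(x)>0$; also, $c$ being a limit of the convergence is bounded above uniformly (either assume it, or note it follows since $c(x,y)$ converges uniformly so is eventually uniformly bounded, and on the initial compact range one can absorb constants). Hence for $t$ large enough, uniformly in $x$, both $c(x,ty)$ and $c(x,t)$ lie within $(1\pm\epsilon_c')$ of $c(x)$ for a suitably small $\epsilon_c'$ chosen so that $(1+\epsilon_c')/(1-\epsilon_c')\le 1+\epsilon_c$ and $(1-\epsilon_c')/(1+\epsilon_c')\ge 1-\epsilon_c$; together with $\inf_x c(x)>0$ bounding the denominator away from $0$, this gives
\[
1-\epsilon_c \le \frac{c(x,ty)}{c(x,t)} \le 1+\epsilon_c
\]
for all $x$ and all $y>1$, once $t>N_c$. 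Next the $\delta$-part: since $\delta(x,s)\to\delta(x)=-1/\gamma_F(x)$ uniformly in $x$, for any $\epsilon_a>0$ there is $N_a$ with $|\delta(x,s)+1/\gamma_F(x)|<\epsilon_a$ for all $s>N_a$ and all $x$. For $t>N_a$ and $y>1$ the whole integration range $[t,ty]$ lies beyond $N_a$, so
\[
\Big|\int_t^{ty}\frac{\delta(x,s)}{s}\,\dd s + \frac{1}{\gamma_F(x)}\log y\Big| \le \epsilon_a \log y,
\]
whence $\exp\!\big(\int_t^{ty}\delta(x,s)/s\,\dd s\big)$ is sandwiched between $y^{-1/\gamma_F(x)-\epsilon_a}$ and $y^{-1/\gamma_F(x)+\epsilon_a}$. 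Multiplying the two bounds and taking $N=\max\{N_c,N_a\}$ gives exactly the claimed inequality.

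The main obstacle is the bookkeeping around the $c$-part rather than anything deep: one must be careful that the uniform convergence $c(x,y)\to c(x)$ is in the variable $y$ \emph{for $y$ large}, and that the range $[t,ty]$ with $y>1$ unbounded still keeps both arguments $ty$ and $t$ in the ``eventually'' regime — which it does, since $ty>t>N$. The only genuine subtlety is ensuring the denominator $c(x,t)$ (and $c(x)$) stays bounded away from $0$ uniformly, which is precisely why the hypothesis $\inf_x c(x)>0$ is imposed; without it the ratio $c(x,ty)/c(x,t)$ could not be controlled uniformly. Everything else is the standard one-dimensional Potter-bound argument carried through with ``uniformly in $x$'' appended at each step, so the proof is essentially a uniform re-reading of Theorem B.1.6/Proposition B.1.9-type estimates in \cite{dehaan}.
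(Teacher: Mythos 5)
Your proof is correct and follows essentially the same route as the paper's: split the ratio via the uniform Karamata representation into the $c$-part and the $\delta$-part, use uniform convergence plus $\inf_x c(x)>0$ to control the $c$-ratio (the paper phrases this with an additive tolerance $\epsilon_1\le c_*\epsilon_c/(2+\epsilon_c)$, you with a multiplicative one, but these are interchangeable given the lower bound on $c$), and use $|\delta(x,s)-\delta(x)|<\epsilon_a$ for $s>N$ to sandwich the exponential integral between $y^{-1/\gamma_F(x)\pm\epsilon_a}$. The only addition you make beyond the paper's text is the explicit observation that $[t,ty]\subset[N,\infty)$ for $y>1$, which the paper leaves implicit.
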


\begin{proof}
    We have that
    \begin{align*}
        \frac{\bar{F}_{Y|X=x}(ty)}{\bar{F}_{Y|X=x}(t)}=\frac{c(x,ty)}{c(x,t)}\exp\left(\int_t^{ty}\delta(x,s)/s \dd s\right).
    \end{align*}
For every $\epsilon_1>0$ there exists a $N>0$ such that $|c(y,x)-c(x)|<\epsilon_1$ for $y\ge N$, and for all $x\in \mathcal{X}$. Then for every $\epsilon_c>0$ we can choose a $N$ such that $\epsilon_1\leq \frac{c_* \epsilon_c}{2+\epsilon_c}$, where $c_*=\inf_x c(x)$. Then for $t>N$, we have
\begin{align*}
    \frac{c(yt,x)}{c(t,x)}\leq \frac{c(x)+\epsilon_1}{c(x)-\epsilon_1}<1+\epsilon_c.
\end{align*}
Likewise we have for $\epsilon_1<c_*\epsilon_c/(2-\epsilon_c)$ that
\begin{align*}
    \frac{c(yt,x)}{c(t,x)}\geq \frac{c(x)-\epsilon_1}{c(x)+\epsilon_1}>1-\epsilon_c.
\end{align*}
Now look at $\exp\left(\int_t^{ty}\delta(x,s)/s\,\dd s\right)$. For large enough $N$, we have that for $s\geq N$ then $|\delta(x,s)-\delta(x)|<\epsilon_a$ for all $\epsilon_a>0$. So for $y\geq N$

\begin{align*}
    \exp\left(\int_t^{ty}(\delta(x)-\epsilon_a)/sds\right)&\leq \exp\left(\int_t^{ty}\delta(x,s)/sds\right)\\
    &\leq \exp\left(\int_t^{ty}(\delta(x)+\epsilon_a)/sds\right) 
\end{align*}
which is equivalent to $$    y^{-1/\gamma_F(x)-\epsilon_a}\leq \exp\left(\int_t^{ty}\delta(x,s)/s\,\dd s\right) \leq y^{-1/\gamma_F(x)+\epsilon_a}.$$
Combining both terms, we have the desired result for $y>N$ and $N$ large enough.
\end{proof}

We now establish uniform Potter bounds for $U_{F_{Y|X=x}}$. First note that if $L$ is slowly varying, then there exists a slowly varying function $L^*$, which satisfies
\begin{align}
    L(x)L^*(xL(x))\rightarrow 1
    \label{eq:deBruyn}
\end{align}
as $x\rightarrow \infty$. $L^*$ is called the De Bryun conjugate of $L$ (See proposition 2.5 in \cite{beirlant2006statistics} and the following discussion for more information).

\begin{theorem}[Uniform Potter bounds for the tail quantile functions]
    Assume the conditions of Theorem \ref{thm:potter-F}. In addition assume that $U_{F_{Y|X=x}}$ is continuous for each $x\in\mathcal{X}$ and that there exist a $K>0$ such that for all $x\in \mathcal{X}$ we have for $y>K$ that $L_{U_{F_{Y|X=x}}}(y)$ is monotone and $L_{F_{Y|X=x}}(y)$ is continuous. Finally, assume that \eqref{eq:deBruyn} holds uniformly in $x\in\mathcal{X}$ for $L_{F_{Y|X=x}}(y)^{-\gamma(x)}$. Then for all $\epsilon_a>0$ and $\epsilon_c>0$ there exists a $T$ such that for $t>T$ we have
    \begin{align*}
        (1-\epsilon_c)y^{\gamma_F(x)-\epsilon_a}\leq \frac{U_{F_{Y|X=x}}(ty)}{U_{F_{Y|X=x}}(t)} \leq (1+\epsilon_c)y^{\gamma_F(x)+\epsilon_a}
    \end{align*}
    for all $x\in \mathcal{X}$ and $y>1$.
    \label{thm:potter-U}
\end{theorem}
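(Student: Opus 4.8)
The plan is to reduce the statement to the existence of a uniform Karamata representation for the family $\{U_{F_{Y|X=x}}\}_{x\in\mathcal{X}}$, with a $c$-function bounded below on $\mathcal{X}$ and a $\delta$-function converging uniformly to $\gamma_F(x)$, and then to rerun the argument of Theorem \ref{thm:potter-F} essentially verbatim. Indeed, that proof used only the representation $\bar F_{Y|X=x}(ty)/\bar F_{Y|X=x}(t)=\bigl(c(x,ty)/c(x,t)\bigr)\exp\bigl(\int_t^{ty}\delta(x,s)/s\,\dd s\bigr)$, the bound $c(x,ty)/c(x,t)\in(1-\epsilon_c,1+\epsilon_c)$ obtained from uniform convergence of $c$ and $\inf_x c(x)>0$, and the bound $y^{\gamma_F(x)\mp\epsilon_a}$ for the exponential obtained from uniform convergence of $\delta$. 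Replacing $(c,\delta)$ by the corresponding pair $(c_U,\delta_U)$ of $U_{F_{Y|X=x}}$ then produces exactly the asserted inequalities for all $t>T$, $x\in\mathcal{X}$, $y>1$.

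So the real content is the construction of that representation, which I would carry out in two moves. First, from the Karamata representation of $\bar F_{Y|X=x}$ recalled before Theorem \ref{thm:potter-F} one reads off one for the slowly varying factor $L_{F_{Y|X=x}}(y)=y^{1/\gamma_F(x)}\bar F_{Y|X=x}(y)$: the $c$-function is still $c(x,y)$ and the $\delta$-function is $\delta(x,y)+1/\gamma_F(x)$, which converges to $0$ uniformly in $x$. Second, since $U_{F_{Y|X=x}}$ and $L_{F_{Y|X=x}}$ are continuous, inverting $\bar F_{Y|X=x}(U_{F_{Y|X=x}}(t))=1/t$ gives
\begin{align*}
U_{F_{Y|X=x}}(t)=t^{\gamma_F(x)}\,L_{F_{Y|X=x}}\bigl(U_{F_{Y|X=x}}(t)\bigr)^{\gamma_F(x)}=:t^{\gamma_F(x)}L_U(x,t),
\end{align*}
with $L_U(x,\cdot)$ slowly varying and, by assumption, eventually monotone.

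The delicate point, and the one I expect to be the main obstacle, is to turn the implicit relation $L_U(x,t)=L_{F_{Y|X=x}}\bigl(t^{\gamma_F(x)}L_U(x,t)\bigr)^{\gamma_F(x)}$ into an explicit, uniformly controlled expression: the composition $L_{F_{Y|X=x}}(U_{F_{Y|X=x}}(t))$ entangles the $x$-dependence of both factors, and it is exactly the assumed uniform-in-$x$ validity of \eqref{eq:deBruyn} for $L_{F_{Y|X=x}}^{-\gamma_F(x)}$, together with continuity and eventual monotonicity, that legitimizes the inversion while keeping rates uniform. Concretely, rewriting the relation as $t^{\gamma_F(x)}=U_{F_{Y|X=x}}(t)\,L_{F_{Y|X=x}}(U_{F_{Y|X=x}}(t))^{-\gamma_F(x)}$ and applying the uniform De Bruyn relation yields $L_U(x,t)=\bigl(L_{F_{Y|X=x}}^{-\gamma_F(x)}\bigr)^{*}\bigl(t^{\gamma_F(x)}\bigr)(1+o(1))$ uniformly in $x$; feeding in the representation of $L_{F_{Y|X=x}}$ from the first move and using eventual monotonicity of $L_U(x,\cdot)$ to absorb the $(1+o(1))$ into a genuine Karamata representation, one gets $U_{F_{Y|X=x}}(t)=c_U(x,t)\exp\bigl(\int_1^t\delta_U(x,s)/s\,\dd s\bigr)$ with $\delta_U(x,t)\to\gamma_F(x)$ and $c_U(x,t)\to c_U(x)$ uniformly. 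Here $c_U(x)$ is a power of $c(x)$ with exponent in $[-\gamma_F^U,-\gamma_F^L]$ times a bounded De Bruyn correction, so $\inf_x c_U(x)>0$ follows from $\inf_x c(x)>0$ and from $c(x,\cdot)$ being eventually uniformly bounded (a consequence of uniform convergence of $c$). Plugging this into the scheme of the first paragraph finishes the proof.

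Finally, I would record the one auxiliary fact used implicitly above, namely that $U_{F_{Y|X=x}}(t)\to\infty$ uniformly in $x$, so that the arguments at which $L_{F_{Y|X=x}}$ is evaluated in the inversion eventually lie in the range where its Karamata functions are uniformly close to their limits; this is immediate from $\gamma_F(x)\ge\gamma_F^L>0$ together with the lower uniform Potter bound for $\bar F_{Y|X=x}$ in Theorem \ref{thm:potter-F}.
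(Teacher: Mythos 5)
Your strategy is to build a uniform Karamata representation for $U_{F_{Y|X=x}}$ and then reapply the argument of Theorem~\ref{thm:potter-F}. This is a genuinely different route from the paper's: the paper never constructs such a representation but instead bounds the ratio $L_{U_{F_{Y|X=x}}}(tA)/L_{U_{F_{Y|X=x}}}(t)$ directly, via the substitutions $A=((1+\epsilon_{c_1})y^{1+\epsilon_{a_1}})^{1/\gamma(x)}$ and $t=t_1L_x(t_1)$ with $L_x=L_{F_{Y|X=x}}^{-\gamma_F(x)}$, the identity $L_{U_{F_{Y|X=x}}}(y)=L_x^*(y^{\gamma(x)})$, eventual monotonicity of $L_{U_{F_{Y|X=x}}}$ to absorb the $(1+\epsilon_{c_1})y^{\epsilon_{a_1}}$ factor, and then the de Bruyn relation together with uniform Potter bounds for $L_{F_{Y|X=x}}$.

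There is a genuine gap at the step you yourself single out as the main obstacle. Passing from $L_U(x,t)=L_x^*\big(t^{\gamma_F(x)}\big)\,(1+o(1))$ to an actual uniform Karamata representation of $U_{F_{Y|X=x}}$ is asserted but not carried out, and it does not follow from the stated hypotheses. Concretely, the exact identity $L_U(x,t)=L_{F_{Y|X=x}}\big(U_{F_{Y|X=x}}(t)\big)^{\gamma_F(x)}$ gives
\begin{align*}
L_U(x,t)=c\big(x,U_{F_{Y|X=x}}(t)\big)^{\gamma_F(x)}\exp\left(\gamma_F(x)\int_1^{U_{F_{Y|X=x}}(t)}\frac{\delta(x,s)+1/\gamma_F(x)}{s}\,\dd s\right),
\end{align*}
so the integral runs up to $U_{F_{Y|X=x}}(t)$, not $t$. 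To convert this into a Karamata representation in $t$ you would need to substitute $s=U_{F_{Y|X=x}}(u)$, which requires differentiability of $U_{F_{Y|X=x}}$ and uniform control of its elasticity $u\,U'_{F_{Y|X=x}}(u)/U_{F_{Y|X=x}}(u)$ -- neither of which is assumed (only continuity is), and the latter is essentially the ``uniform normalized'' Definition~\ref{def:uni-U} that the paper reserves for the CLTs. Likewise, absorbing the $(1+o(1))$ from the de Bruyn relation into a $c_U$-function requires a rate, not merely convergence. The paper's proof sidesteps all of this by invoking the de Bruyn relation only in the ratio form $L_x^*(t_1yL_x(yt_1))/L_x^*(t_1L_x(t_1))\sim L_x(t_1)/L_x(t_1y)$, where the correction factors cancel, so no Karamata representation of $L_x^*$ is ever needed. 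As written your argument is incomplete; either supply the missing construction (at the cost of extra smoothness and elasticity hypotheses), or switch to the ratio argument.
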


\begin{proof}
    The proof for the upper bound is shown. The lower bound follows by similar argumentation. So for $A>1$, we show that
    \begin{align*}
        \frac{U_{F_{Y|X=x}}(tA)}{U_{F_{Y|X=x}}(t)}\leq(1+\epsilon_c)A^{\gamma_F(x)+\epsilon_a}
    \end{align*}
    for some $\epsilon_c,\epsilon_a$ and $t> T$.
    It is equivalent to show
    \begin{align*}
        \frac{L_{U_{F_{Y|X=x}}}(tA)}{L_{U_{F_{Y|X=x}}}(t)}\leq(1+\epsilon_c)A^{\epsilon_a}.
    \end{align*}
First assume that $L_{U_{F_{Y|X=x}}}(y)$ is monotone decreasing for $y>K$. It is possible for all $A>1$ to be written as $A=((1+\epsilon_{c_1})y^{1+\epsilon_{a_1}})^{1/\gamma(x)}$ for some $\epsilon_{c_1}$, $\epsilon_{a_1}>0$ and $y>1$. Further write $t=t_1L_x(t_1)$,  where $L_x:=L_{F_{Y|X=x}}^{-\gamma(x)}$. This implies that there exist uniform Potter bounds for $L_x$, and hence there exist a $T_1>0$ such that for $t_1>T_1$ we have 
    \begin{align*}
        ((1+\epsilon_{c_1})y^{\epsilon_{a_1}})^{-1}\leq \frac{L_x(t_1)}{L_x(t_1y)}.
    \end{align*}
   Since ${U_{F_{Y|X=x}}}$ is continuous, we can conclude from the discussion in Section 2.9.3 in \cite{beirlant2006statistics} that $L_{U_{F_{Y|X=x}}}(y)=L_x^*(y^{\gamma(x)})$, where $L_x^*$ is the de Bryun conjugate of $L_x$. Now we have for $t_1>\max\{K,T_1\}$
   \begin{align*}
       \frac{L_{U_{F_{Y|X=x}}}(tA)}{L_{U_{F_{Y|X=x}}}(t)}&=\frac{L_x^*((1+\epsilon_{c_1})y^{1+\epsilon_{a_1}}t_1L_x(t_1))}{L_x^*(t_1L_x(t_1))} \\
       &=\frac{L_x^*\left((1+\epsilon_{c_1})y^{1+\epsilon_{a_1}}t_1L_x(yt_1)\frac{L_x(t_1)}{L_x(t_1y)}\right)}{L_x^*(t_1L_x(t_1))} \\
       & \leq \frac{L_x^*(t_1yL_x(yt_1))}{L_x^*(t_1L_x(t_1))}.
   \end{align*}
   Since the uniform de Bryun conjugate of $L_x$ is $L_x^*$, we have that 
   \begin{align*}
       \frac{L_x^*(t_1yL_x(yt_1))}{L_x^*(t_1L_x(t_1))}\sim\frac{L_x(t_1)}{L_x(t_1y)}
   \end{align*}
    and hence for all $\epsilon_1>0$ there exist some $T_2>0$, such that for $t_1>T_2$ we have
    \begin{align*}
        \frac{L_x^*(t_1yL_x(yt_1))}{L_x^*(t_1L_x(t_1))}\leq \frac{L_x(t_1)}{L_x(t_1y)}+\epsilon_1
    \end{align*}
    By definition of $L_x$, we have
    \begin{align*}
        \frac{L_x(t_1)}{L_x(t_1y)}+\epsilon_1\leq \left(\frac{L_{F_{Y|X=x}}(t_1y)}{L_{F_{Y|X=x}}(t_1)}\right)^{\gamma_F(x)}+\epsilon_1.
    \end{align*}
    By using the uniform Potter bounds for $L_{F_{Y|X=x}}$, there exists some $T_3>0$, such that for $\epsilon_{c_2},\epsilon_{a_2}>0$, we have for $t_1>T_3$
    \begin{align*}
        \left(\frac{L_{F_{Y|X=x}}(t_1y)}{L_{F_{Y|X=x}}(t_1)}\right)^{\gamma_F(x)}+\epsilon_{1}&\leq (1+\epsilon_{c_2})^{\gamma^U}y^{\epsilon_{a_2}\gamma^U}+\epsilon_1 \\
        &=(1+\epsilon_{c_2})^{\gamma^U}\frac{A^{\gamma^U/(1+\epsilon_{a_1})\epsilon_{a_2}\gamma^U}}{(1+\epsilon_{c_1})^{\epsilon_{a_2}\gamma^U/(1+\epsilon_{a_1})}}+\epsilon_1 \\
        &\leq (1+\epsilon_c)A^{\epsilon_a}
    \end{align*}
    for some arbitrarily small $\epsilon_a,\epsilon_c>0$ for a large enough $t_1$, and hence we can conclude that there exist a $T>0$ such that for $t>T$ the desired result holds.
    
    The rewriting of $t$ and $A$ might depend on $x$.
    The latter is not a problem since $y>1$ can be chosen freely and does not change the value of $T_1$. Regarding the former quantity, we need to show that for all $x$, $t$ can be represented as $t=t_1L_x(t_1)$ for $t_1>\max\{T_1,T_2, T_3, K\}$. Clearly $t_1L_x(t_1)\rightarrow \infty$. Furthermore, since $L_x(t_1)$ is continuous for $t_1>K$, we have that $t_1L_x(t_1)$ is continuous for $t_1>K$. Now let $T_4=\max\{T_1,T_2,T_3, K\}$. If we can select $t$ larger than $T_4L_x(T_4)$ then there exists $t_1>T_4$ such that $t=t_1L_x(t_1)$ for all $x\in \mathcal{X}$. 
    
    Finally, we show that it is possible to select such a $t$. I.e. we need to show that $T_4L_x(T_4)$ is bounded in $\mathcal{X}$. By the Karamata representation we have that for some $T_5>0$ then for all $y>T_5$ there exist $0<\epsilon_1<c$ and $\epsilon_2>0$ such that
    \begin{align*}
        L_{F_{Y|X=x}}(y)\geq (c-\epsilon_1)y^{-\epsilon_2}
    \end{align*}
    where $c=\inf_{x\in \mathcal{X}} c(x)>0$. So
    \begin{align*}
        L_x\leq \left(\frac{1}{c-\epsilon_1}\right)^{\gamma(x)}y^{\epsilon_2\gamma(x)}\leq K_1 y^{\gamma^U\epsilon_2}
    \end{align*}
    Hence by choosing $t>\max\{K_1 T_5^{1+\gamma^U\epsilon_2},K_1 T_4^{1+\gamma^U\epsilon_2}\}$ the upper Potter bound holds uniformly. 

    By an analogous argument we we may treat the case of when $L_{F_{Y|X=x}}$ is monotone increasing where we let $A=(1-\epsilon_c)y^{-\epsilon_a}$. Finally, the lower bound can be shown similarly.
\end{proof}

\begin{remark}
    Theorem \ref{thm:potter-U} still holds if we instead assume similar conditions as in Theorem \ref{thm:potter-F}.
\end{remark}

\newpage
\section{Uniform Segers bounds}\label{sec:segers}

This section is devoted to the uniform extensions of Lemma 4.3 and Lemma 4.6 in \cite{segers}.

\begin{lemma}[Extended Segers bounds ]
\label{segers}
    Assume that $U_{F_{Y|X=x^0}}$ is uniform normalized. There exists a $\epsilon>0$, such that for $t,s>T$ for some $T>0$ and $y>1$
    \begin{align}
        y^{\gamma_F(x^0)-\epsilon}\leq\frac{U_{F_{Y|X=x^0}}(sy)}{U_{F_{Y|X=x^0}}(s)}\leq y^{\gamma_F(x^0)+\epsilon}
        \label{Segers1}
    \end{align}
    and
    \begin{align}
        &\left|\frac{U_{F_{Y|X=x^0}}(sy)}{U_{F_{Y|X=x^0}}(s)}-\frac{U_{F_{Y|X=x^1}}(ty)}{U_{F_{Y|X=x^1}}(t)}\right|\nonumber\\
        &\leq2\left(\int_s^{\infty} |\eta_{x^0}'(l)|\dd l+\int_t^{\infty} |\eta_{x^0}'(l)|\dd l\right) |\log(s/t)|y^{\gamma^U+\epsilon}\nonumber\\
        &\quad+\left(\int_t^{\infty} |\eta_{x^0}'(l)|+|\eta_{x^1}'(l)||\dd l+|\eta_{x^0}-\eta_{x^1}|\right)\log(y)y^{\gamma^U+\epsilon}\label{Segers2}
    \end{align}
    for all $x_0,x_1\in \mathcal{X}$
\end{lemma}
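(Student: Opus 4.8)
The plan is to push everything through the uniform normalized representation of Definition~\ref{def:uni-U}. For $s>T$ and $y>1$, the change of variables $u=sv$ gives $U_{F_{Y|X=x^0}}(sy)/U_{F_{Y|X=x^0}}(s)=\exp\!\bigl(\int_1^{y}\eta_{x^0}(sv)\,\dd v/v\bigr)$. Since $\eta_{x}(u)\to\gamma_F(x)$ uniformly in $x\in\mathcal{X}$, we may enlarge $T$ so that $|\eta_{x}(u)-\gamma_F(x)|<\epsilon$ for all $u>T$ and all $x\in\mathcal{X}$; for $s>T$ the exponent then lies within $\epsilon\log y$ of $\gamma_F(x^0)\log y$, and exponentiating yields \eqref{Segers1}. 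Since $\gamma_F(x)\le\gamma^U$, the same estimate shows that both ratios occurring in \eqref{Segers2} are bounded above by $y^{\gamma^U+\epsilon}$.

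For \eqref{Segers2} I would start from the elementary inequality $|e^{a}-e^{b}|\le\max(e^{a},e^{b})\,|a-b|$ with $a=\int_1^{y}\eta_{x^0}(sv)\,\dd v/v$ and $b=\int_1^{y}\eta_{x^1}(tv)\,\dd v/v$; combined with the previous paragraph this bounds the left-hand side of \eqref{Segers2} by $y^{\gamma^U+\epsilon}\bigl|\int_1^{y}\bigl(\eta_{x^0}(sv)-\eta_{x^1}(tv)\bigr)\,\dd v/v\bigr|$. I would then split the integrand as $\bigl(\eta_{x^0}(sv)-\eta_{x^0}(tv)\bigr)+\bigl(\eta_{x^0}(tv)-\eta_{x^1}(tv)\bigr)$, which produces the two lines of the claimed bound.

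For the first term, write $\eta_{x^0}(sv)-\eta_{x^0}(tv)=\int_{tv}^{sv}\eta_{x^0}'(l)\,\dd l$, substitute $l=mv$, apply Fubini to interchange the $m$- and $v$-integrals, and substitute back; this factors out $\int_{\min(s,t)}^{\max(s,t)}\dd m/m=|\log(s/t)|$ and leaves an inner integral bounded by $\int_{\min(s,t)}^{\infty}|\eta_{x^0}'(l)|\,\dd l\le\int_{s}^{\infty}|\eta_{x^0}'(l)|\,\dd l+\int_{t}^{\infty}|\eta_{x^0}'(l)|\,\dd l$ (the displayed constant $2$ leaves slack here). For the second term, since $\eta_{x^i}(u)\to\gamma_F(x^i)$ one has $\eta_{x^i}(tv)-\gamma_F(x^i)=-\int_{tv}^{\infty}\eta_{x^i}'(l)\,\dd l$ for $i=0,1$ (when $\int^{\infty}|\eta_{x^i}'(l)|\,\dd l=\infty$ the asserted inequality is trivially true), so $|\eta_{x^0}(tv)-\eta_{x^1}(tv)|\le\int_{t}^{\infty}|\eta_{x^0}'(l)|\,\dd l+\int_{t}^{\infty}|\eta_{x^1}'(l)|\,\dd l+|\gamma_F(x^0)-\gamma_F(x^1)|$ uniformly for $v\ge1$; integrating against $\dd v/v$ over $[1,y]$ supplies the factor $\log y$. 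Multiplying the two contributions by $y^{\gamma^U+\epsilon}$ and recalling $\eta_{x^i}=\gamma_F(x^i)$ gives \eqref{Segers2}.

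The content of the argument is almost entirely bookkeeping; the one genuine point is that the threshold $T$ must be chosen uniformly over $x^0,x^1\in\mathcal{X}$, which is precisely what the uniform convergence $\eta_x(u)\to\gamma_F(x)$ built into Definition~\ref{def:uni-U} provides, and that the Fubini/change-of-variables step is the device that upgrades the crude estimate on $\int_{tv}^{sv}|\eta_{x^0}'(l)|\,\dd l$ to one carrying the sharp $|\log(s/t)|$ pre-factor. Integrability of $\eta_x'$ near infinity is not an obstacle, since the inequality is vacuous otherwise.
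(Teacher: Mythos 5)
Your proof is correct and takes essentially the same route as the paper: the same normalized-representation rewriting, the same $|e^{a}-e^{b}|\le\max(e^{a},e^{b})|a-b|$ step, and the same split of the exponent difference into a ``same $x$, different threshold'' term and a ``same threshold, different $x$'' term. The only cosmetic difference is in how the first term is handled --- you use the fundamental theorem of calculus on $\eta_{x^0}(sv)-\eta_{x^0}(tv)=\int_{tv}^{sv}\eta_{x^0}'$ together with Fubini and the change of variables $l=mv$, whereas the paper first cancels the overlapping part of the intervals $[s,sy]$ and $[t,ty]$ to reduce to $\int_t^{s}\bigl(\eta_{x^0}(uy)-\eta_{x^0}(u)\bigr)\dd u/u$ and then bounds $|\eta_{x^0}(uy)-\eta_{x^0}(u)|$ by routing through the limit $\eta_{x^0}$; both yield the $|\log(s/t)|$ prefactor, and your version in fact slightly tightens the constant (no factor $2$).
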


\begin{proof}

    We start by showing \eqref{Segers1}. Since $U_{F_{Y|X=x^0}}$ is uniform normalized, we have that for $\epsilon>0$ we can find a $T$, such that for $s>T$, we have $|\eta_x(s)-\eta_x|<\epsilon$ for all $x\in\mathcal{X}$. Then
    \begin{align*}
        \frac{U_{F_{Y|X=x^0}}(sy)}{U_{F_{Y|X=x^0}}(s)}&=\exp\left(\int_s^{sy}\eta_{x^0}(u)/u\dd u\right)\\
        &\leq \exp\left((\epsilon+\gamma_F(x^0)\int_s^{sy}1/u \dd u\right)\\
        &\leq y^{\gamma_F(x^0)+\epsilon} 
    \end{align*}
    The lower bound can be obtained similarly. We now turn to \eqref{Segers2}. Note that
    
    \begin{align*}
        &\left|\frac{U_{F_{Y|X=x^0}}(sy)}{U_{F_{Y|X=x^0}}(s)}-\frac{U_{F_{Y|X=x^1}}(ty)}{U_{F_{Y|X=x^1}}(t)}\right|\\
        &=\left| \exp\left(\int_s^{sy}\eta_{x^0}(u)/u\dd u\right)-\exp\left(\int_t^{ty}\eta_{x^1}(u)/u \dd u \right)\right| \\
        &\leq \left|\int_s^{sy}\eta_{x^0}(u)/u \dd u-\int_t^{ty}\eta_{x^1}(u)/u \dd u \right| \max\left\{\frac{U_{F_{Y|X=x^0}}(sy)}{U_{F_{Y|X=x^0}}(s)},\frac{U_{F_{Y|X=x^1}}(ty)}{U_{F_{Y|X=x^1}}(t)}\right\}.        
    \end{align*}
Both expressions in the maximum are less or equal to $y^{\gamma^U+\epsilon}$ by \eqref{Segers1}. Again, for all $\epsilon>0$, there exists a $T$ such that for $u>T$, we have $|\eta_{x}(u)-\eta_{x}|<\epsilon$ for all $x\in \mathcal{X}$.  Note that 
    \begin{align*}
        |\eta_x(u)-\eta_x|=\left|\int_u^{\infty} \eta_x'(l)\dd l\right|\leq \int_u^{\infty} |\eta_x'(l)|\dd l
    \end{align*}
where the upper bound is decreasing in u. So for $u\geq\min\{s,t,\}$ and $y>1$, we have
    \begin{align*}
        |\eta_x(uy)-\eta_x(u)|\leq2\left(\int_s^{\infty} |\eta_x'(l)|\dd l+\int_t^{\infty} |\eta_x'(l)|\dd l\right).
    \end{align*}
    In addition, we have
    \begin{align*}
        |\eta_{x^0}(u)-\eta_{x^1}(u)|&\leq|\eta_{x^0}(u)-\eta_{x^0}|+|\eta_{x^1}(u)-\eta_{x^1}|+|\eta_{x^0}-\eta_{x^1}| \\
        &\leq \int_t^{\infty} |\eta_{x^0}'(l)|+|\eta_{x^1}'(l)||\dd l+|\eta_{x^0}-\eta_{x^1}|.
    \end{align*}
    Consequently
    \begin{align*}
        &\left|\int_s^{sy}\eta_{x^0}(u)/u \dd u-\int_t^{ty}\eta_{x^1}(u)/u \dd u \right|\\
        &=\left|\int_s^{sy}\eta_{x^0}(u)/u \dd u-\int_t^{ty}\eta_{x^0}(u)/u \dd u+\int_t^{ty}\eta_{x^0}(u)/u \dd u-\int_t^{ty}\eta_{x^1}(u)/u \dd u \right| \\
        &\leq\left|\int_s^{sy}\eta_{x^0}(u)/u \dd u-\int_t^{ty}\eta_{x^0}(u)/u \dd u\right|\\
        &\quad+\left|\int_t^{ty}\eta_{x^0}(u)/u \dd u-\int_t^{ty}\eta_{x^1}(u)/u \dd u \right| \\
        &=\left|\int_t^{s}\frac{\eta_{x^0}(uy)-\eta_{x^0}(u)}{u}\dd u \right|+\left|\int_t^{ty}\frac{\eta_{x^0}(u)-\eta_{x^1}(u)}{u} \dd u\right| \\
        &\leq 2\left(\int_s^{\infty} |\eta_{x^0}'(l)|\dd l+\int_t^{\infty} |\eta_{x^0}'(l)|\dd l\right) \log(s/t)\\
        &+\left(\int_t^{\infty} |\eta_{x^0}'(l)|+|\eta_{x^1}'(l)||\dd l+|\eta_{x^0}-\eta_{x^1}|\right)\log(y).
    \end{align*}
\end{proof}
\begin{lemma}[Second-order uniform Segers bound]
    Let $U_{F_{Y|X}}$ satisfy Condition \ref{cond:second-order}. Assume $\max_x \rho(x)<0$ if ${X}$ is continuous. Then for all $\epsilon>0$ there exist $T>1$ and $K>0$ such that
    \begin{align*}
        |U_{F_{Y|X=x}}(ty)/U_{F_{Y|X=x}}(t)-y^{\gamma_F(x)}|\leq Ky^{\gamma_F(x)} a_{X=x}(t)
    \end{align*}
    for $t\geq T$, $y\geq 1$ and all $x\in \mathcal{X}$.
    \label{lemma:segers2}
\end{lemma}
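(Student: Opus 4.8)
The plan is to reduce to the slowly varying part of $U_{F_{Y|X=x}}$, identify its rate function with the auxiliary function $a_{X=x}$ uniformly in $x$, and then integrate using the uniform Potter bounds of Appendix~\ref{sec:potter}. Abbreviate $\gamma=\gamma_F(x)$, $\rho=\rho(x)$, $a=a_{X=x}$, and set $\bar\rho:=\sup_{x\in\mathcal X}\rho(x)$; this is strictly negative by hypothesis (in the continuous case this is exactly the stated assumption, and for discrete $X$ we take $\bar\rho<0$ as standing, since at an atom with $\rho=0$ the bound cannot hold with a fixed $K$ as $h_0(y)=\log y$ is unbounded). Write $U_{F_{Y|X=x}}(t)=t^{\gamma}\phi_x(t)$ with $\phi_x$ slowly varying in $t$ for each fixed $x$, so that $U_{F_{Y|X=x}}(ty)/U_{F_{Y|X=x}}(t)-y^{\gamma}=y^{\gamma}\bigl(\phi_x(ty)/\phi_x(t)-1\bigr)$; it therefore suffices to produce $T$ and $K$ with $|\phi_x(ty)/\phi_x(t)-1|\le K|a(t)|$ for all $t\ge T$, $y\ge 1$ and $x\in\mathcal X$. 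Observe that Condition~\ref{cond:second-order} is exactly the assertion $\bigl(\phi_x(ty)/\phi_x(t)-1\bigr)/a(t)\to h_\rho(y)$ uniformly, and that for $y\ge 1$ and $\rho\le\bar\rho<0$ we have $|h_\rho(y)|=(1-y^{\rho})/|\rho|\le 1/|\bar\rho|$, a bound uniform in $x$ and $y$; so the heuristic is clear and the real task is to obtain the inequality uniformly over the \emph{unbounded} range $y\ge 1$, i.e.\ a uniform-in-$x$ version of the classical second-order Potter bound (cf.\ \cite{dehaan}).

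Using the uniform Karamata representation of the family $\{\phi_x\}$ — equivalently, the uniform normalization of $U_{F_{Y|X=x}}$ from Definition~\ref{def:uni-U} that is in force where this lemma is applied — write $\phi_x(ty)/\phi_x(t)=\exp\bigl(\int_1^y (\eta_x(tv)-\gamma)\,dv/v\bigr)$ with $\eta_x(u)\to\gamma$ uniformly. Since $\int_1^y(\eta_x(tv)-\gamma)\,dv/v\to0$ as $t\to\infty$, Condition~\ref{cond:second-order} is then equivalent to
\[
\frac{1}{a(t)}\int_1^y\frac{\eta_x(tv)-\gamma}{v}\,dv\;\longrightarrow\;h_\rho(y)=\int_1^y v^{\rho-1}\,dv .
\]
A monotone-density argument — as used for the pointwise second-order Potter estimates in \cite{dehaan} and \cite{segers}, relying on $\eta_x$ being differentiable (Condition~\ref{cond:slowly}) and eventually monotone after an asymptotically equivalent modification — upgrades this integrated relation to $\eta_x(u)-\gamma_F(x)\sim a_{X=x}(u)$. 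Carried out uniformly in $x$, which is possible because the convergence in Condition~\ref{cond:second-order} is uniform and $\gamma_F(x)\in[\gamma_F^L,\gamma_F^U]$, $\rho(x)\le\bar\rho<0$ range over a fixed compact set bounded away from the boundary, this produces $T_0$ and $C_0<\infty$ with $|\eta_x(u)-\gamma_F(x)|\le C_0|a_{X=x}(u)|$ for all $u\ge T_0$ and $x\in\mathcal X$. I expect this step — the uniform-in-$x$ identification of the rate function with $a_{X=x}$ — to be the main obstacle; given the uniformity built into Condition~\ref{cond:second-order} and the uniform Potter machinery of Appendix~\ref{sec:potter}, it should however go through along the lines of the pointwise arguments in \cite{dehaan,segers}.

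It remains to integrate. Apply the uniform Potter bounds of Appendix~\ref{sec:potter} to the family $\{a_{X=x}\}\subset RV_{\rho(x)}$, which is legitimate because, where the lemma is used, $a_{X=x}$ is uniformly decomposable with $\inf_x c(x)>0$: fixing $\delta\in(0,|\bar\rho|)$, there is $N$ such that $|a(tv)|\le 2|a(t)|\,v^{\rho+\delta}$ for all $t\ge N$, $v\ge1$ and $x\in\mathcal X$. Hence, for $t\ge\max\{N,T_0\}$ and $y\ge1$,
\[
\Bigl|\int_1^y\frac{\eta_x(tv)-\gamma}{v}\,dv\Bigr|\le C_0\int_1^y\frac{|a(tv)|}{v}\,dv\le 2C_0|a(t)|\int_1^y v^{\rho+\delta-1}\,dv\le\frac{2C_0}{|\bar\rho|-\delta}\,|a(t)|,
\]
where the last step uses $\rho+\delta\le\bar\rho+\delta<0$, so $\int_1^y v^{\rho+\delta-1}\,dv=(1-y^{\rho+\delta})/|\rho+\delta|\le1/(|\bar\rho|-\delta)$. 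Since $|a_{X=x}(t)|\to0$ uniformly in $x$ (again a consequence of the uniform decomposability of $a$), choose $T\ge\max\{N,T_0\}$ so that this right-hand side is at most $\tfrac12$ for $t\ge T$; then $|e^{z}-1|\le2|z|$ for $|z|\le\tfrac12$ yields $|\phi_x(ty)/\phi_x(t)-1|\le \tfrac{4C_0}{|\bar\rho|-\delta}\,|a_{X=x}(t)|=:K\,|a_{X=x}(t)|$ uniformly in $t\ge T$, $y\ge1$ and $x\in\mathcal X$. Multiplying through by $y^{\gamma_F(x)}$ gives the stated bound (read with $|a_{X=x}(t)|$ on the right-hand side when $a_{X=x}$ changes sign).
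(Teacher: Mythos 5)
Your proposal takes a genuinely different, and much longer, route than the paper's own proof, and it leaves a real gap at its central step.

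The paper's proof is four lines: since $\rho(x)\le\rho^{*}:=\max_{x}\rho(x)<0$ and $y\ge1$, the limit factor satisfies $h_{\rho(x)}(y)=(y^{\rho(x)}-1)/\rho(x)\le -1/\rho^{*}$ uniformly in $x$ and $y$; the ``uniformly'' in Condition~\ref{cond:second-order} is then invoked directly to say that, for $t$ large, the ratio $\bigl(U_{F_{Y|X=x}}(ty)/U_{F_{Y|X=x}}(t)-y^{\gamma_F(x)}\bigr)/a_{X=x}(t)$ is at most $(1+\epsilon_1)\,y^{\gamma_F(x)}h_{\rho(x)}(y)\le Ky^{\gamma_F(x)}$. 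That is, the lemma is read off Condition~\ref{cond:second-order} plus the elementary bound on $h_{\rho}$ — no Karamata representation, no rate function, no Potter bound on $a$. Your approach instead reduces to the slowly varying part $\phi_x$, writes $\phi_x(ty)/\phi_x(t)=\exp\bigl(\int_1^{y}(\eta_x(tv)-\gamma_F(x))\,dv/v\bigr)$, tries to identify the local rate $\eta_x(u)-\gamma_F(x)$ with $a_{X=x}(u)$, and then integrates against uniform Potter bounds for $a_{X=x}$.

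The gap is exactly where you flag it: the ``monotone-density'' upgrade from the integrated relation $\int_1^{y}(\eta_x(tv)-\gamma_F(x))\,dv/v\sim a_{X=x}(t)h_{\rho(x)}(y)$ to the local statement $\eta_x(u)-\gamma_F(x)\sim a_{X=x}(u)$ is a Tauberian step which you assert but do not prove, and which requires monotonicity (or eventual monotonicity, after an asymptotically equivalent modification) of $\eta_x-\gamma_F(x)$ — none of which is in the lemma's hypotheses or in Condition~\ref{cond:second-order}. Doing this \emph{uniformly} in $x$ is precisely the kind of delicate point this paper keeps having to address (cf.\ the uniform Potter and Segers bounds), and it cannot be waved through by analogy with the pointwise case. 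In addition, your route imports hypotheses the lemma does not state: uniform decomposability of $a_{X=x}$ with $\inf_x c(x)>0$, differentiability of $\eta_x$, and the uniform normalization of $U_{F_{Y|X=x}}$. You correctly note that these are in force where the lemma is used (Theorem~\ref{THM:last}), but a proof of the lemma as stated should not borrow them; the paper's proof does not.

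Two smaller remarks. Your observation that the bound must be read with $|a_{X=x}(t)|$ on the right when $a$ changes sign, and that $\rho^*<0$ is in fact needed in all cases (not only when $X$ is continuous), are both valid and are implicit in the paper's proof as well. And your heuristic is sound: the entire content of the lemma is the elementary estimate $|h_{\rho(x)}(y)|\le 1/|\rho^*|$ for $y\ge1$. What is missing in your write-up is not insight but a rigorous bridge from the second-order hypothesis to a bound valid simultaneously for all $y\ge1$; the paper takes that bridge to be built into the word ``uniformly'' in Condition~\ref{cond:second-order}, whereas you try to rebuild it from scratch and stop one step short.
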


\begin{proof}
    Note that $\frac{y^{\rho(x)}-1}{\rho(x)}$ 
    has upper bound $-1/\rho(x)$. Hence for all $\epsilon>0$ and $y\geq 1$
    \begin{align*}
        \frac{y^{\rho(x)}-1}{\rho(x)}\leq -1/\rho^*
    \end{align*}
    where $\rho^*=\max_x \rho(x)$. Since we have uniform convergence, we can conclude that for large $t$, there exists an $\epsilon_1>0$ such that
    \begin{align*}
        \frac{U_{F_{Y|X=x}}(ty)/U_{F_{Y|X=x}}(t)-y^{\gamma_F(x)}}{a_{X=x}(t)}&\leq(1+\epsilon_1)y^{\gamma_F(x)}\frac{y^{\rho(x)}-1}{\rho(x)}\leq K y^{\gamma_F(x)}.
    \end{align*}
    This concludes the proof.
\end{proof}

\newpage
\section{Proof of main results}
\label{sec:proofs-main}

Most proof techniques are inspired by \cite{EKM,stute,stute-co}, so for the sake of conciseness we do not repeat all arguments in full detail, but rather concentrate on the passages where the inclusion of covariates creates significant difficulties. The uniform Potter and Segers bounds from the previous sections, together with some other subtle considerations, are the main tools to help overcome these additional complications.

We first adopt the uniform big-o and small-o notation introduced in the latter reference, with a slight adaptation to include covariates. 
\begin{definition}[Uniform o-notation]
For the random vectors 
\begin{align*}
\bar{Z}^\ast_k&=(Z^\ast_{1,k},\dots,Z^\ast_{k,k},X^\ast_{[1,k]},\dots, X^\ast_{[k,k]})\\
&=( Z_{n-k+1,n}/Z_{n-k,n},\ldots , Z_{n,n}/Z_{n-k,n},X_{[n-k+1,n]},\dots, X_{[n,n]}),
\end{align*}
an array of functions $(f_{k,n})_{k,n\in\amsmathbb{N}}$, the baseline random variable $Z_{n-k,n}$, and a deterministic sequence $a_1,a_2,\ldots$ we say that $f_{k,n}(\bar{Z}^\ast_k)$  is $o_{{P}}(a_k)$ uniformly in large $t$ w.r.t. $Z_{n-k,n}$ if for every $\varepsilon>0$,
\begin{align}
\lim_{n/k,\,k\to\infty} 
\limsup\limits_{t\rightarrow\infty}\pr(|f_{k,n}(\bar{Z}^\ast_k)/a_k|>\varepsilon|\,Z_{n-k,n}=t) = 0.
\end{align} 
We then write $f_{k,n}(\bar{Z}^\ast_k)=\overline{o}_{P}(a_k)$. Analogously $f_{k,n}(\bar{Z}^\ast_k)=\overline{O}_{P}(a_k)$ using the notions of bounded in probability instead of convergence to zero in probability.
\end{definition}

\subsection{Proof of Theorem \ref{3.2} (main decomposition)}
We begin by imposing an additional restriction, which is thereafter lifted to prove the general decomposition.
\begin{condition}[Truncated version] There exists a function $\hat{\varphi}(w)$ such that $|\varphi(x,w)|\leq \hat{\varphi}(w)$ and $\hat{\varphi}(w)=0$ for $w>T$ and some $T<\infty$.
    \label{temp}
\end{condition}

\begin{theorem}[Main decomposition, truncated version]
    Let $\varphi$ have a uniform envelope $\hat{\varphi}(w)$ and satisfy Condition \ref{temp}. Further assume that $U_{F_{Y|X=x}}$ can be uniformly decomposed. Then
    \begin{align}
       & r_{k,n}=\nonumber\\
        &S_{k,n}(\phi)-\Big\{ \frac{1}{k}\sum_{i=1}^k \varphi(X^*_{i},V_i^*)\gamma_0^t(V_i^*)\delta_i^*+\frac{1}{k}\sum_{i=1}^k\gamma_1^t(V_i^*)(1-\delta_i^*)-\frac{1}{k}\sum_{i=1}^k\gamma_2^t(V_i^*)\Big\}
        \label{thm1}
    \end{align}
    is $\overline{o}_\pr(k^{-1/2})$.
    \label{thm:cond}
\end{theorem}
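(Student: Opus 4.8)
\textbf{Proof strategy for Theorem \ref{thm:cond} (truncated main decomposition).}

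The plan is to mimic the decomposition of \cite{stute,stute-co} for Kaplan--Meier integrals, but carried out conditionally on $Z_{n-k,n}=t$ and then controlled uniformly in large $t$, which is exactly the regime where the $\overline{o}_\pr$ notation lives. First I would condition on $Z_{n-k,n}=t$ and use the distributional identity stated just before the theorem, which says that the top $k$ concomitants and spacings $(X_{[n-i+1,n]},Z_{n-i+1,n}/Z_{n-k,n},\delta_{[n-i+1,n]})$ are, given $Z_{n-k,n}=t$, equal in law to the order statistics and concomitants $(X^\ast_{[i,k]},V^\ast_{i,k},\delta^\ast_{[i,k]})$ of an iid sample of size $k$ drawn from the tail law $\mathcal{L}(X,Y/t,C/t\mid Z>t)$. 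Under this identification $S_{k,n}(\varphi)$ becomes an ordinary (finite-sample) Kaplan--Meier integral of $\varphi$ with respect to the product-limit estimator built from this size-$k$ tail sample, with ``sample-size'' playing the role of $k$ and the underlying sub-distribution functions being the tail counterparts $H^t, H^{0,t}, H^{11,t}$. The target functions $\gamma_0^t,\gamma_1^t,\gamma_2^t$ are precisely the Stute-type functions for these tail sub-distributions, so the theorem is really the statement: \emph{the Stute i.i.d.-representation error for a size-$k$ Kaplan--Meier integral is $o_\pr(k^{-1/2})$, and the bound on the error is uniform over the family of tail laws indexed by $t$.} I would therefore quote the exact algebraic decomposition from \cite{stute,stute-co,EKM}: write $S_{k,n}(\varphi)$ using the Hardy--Littlewood/Kaplan--Meier weight identity, expand the product weights $W_{ik}$ as $\gamma_0^t(V^\ast_i)\delta^\ast_i/k$ plus correction terms, and collect the corrections into (i) a term that telescopes into the $\gamma_1^t$-sum, (ii) a term giving the $\gamma_2^t$-sum, and (iii) a genuine remainder $r_{k,n}$. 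This bookkeeping is identical to the no-covariate case in \cite{EKM} because $\varphi$ enters only through its values $\varphi(X^\ast_i,V^\ast_i)$ and all the probabilistic weighting is in $V^\ast_i,\delta^\ast_i$; the covariates ride along passively at this stage.

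The remainder $r_{k,n}$ is, as in \cite{stute,EKM}, a sum of a handful of pieces, each of the form (empirical mean of an explicit functional of $(V^\ast_i,\delta^\ast_i,X^\ast_i)$) times (difference between an empirical sub-distribution function and its population tail counterpart evaluated near the truncation point), divided by $1-H^t$ raised to a power. To show $k^{1/2}r_{k,n}\to 0$ in the $\overline{o}_\pr$ sense, the standard route is: bound the empirical-process fluctuations $\sup_{w\le T}|H^t_k(w)-H^t(w)|$, etc., by $O_\pr(k^{-1/2})$ via Dvoretzky--Kiefer--Wolfowitz (which is distribution-free, hence automatically uniform in $t$), so that the genuinely dangerous part is the integrand near the right endpoint where $1-H^t(w)$ is small. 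This is where the truncation Condition \ref{temp} does its work: since $\hat\varphi(w)=0$ for $w>T$, all integrals are over $[1,T]$, and on $[1,T]$ the denominators $1-H^t(w)$ are bounded away from $0$ — but only for fixed $t$; what must be checked is that this lower bound on $1-H^t$ on $[1,T]$ does not degenerate as $t\to\infty$. Here I would invoke the \emph{uniform Potter bounds} of Appendix \ref{sec:potter} (Theorem \ref{thm:potter-F}, applied to $\bar H^t$ which is regularly varying with index $-1/\gamma_H(x)$ after mixing over $x$, using $\gamma_H(x)\le\gamma_H^U$): they give, for $t$ large, $1-H^t(w)\ge c\,w^{-1/\gamma_H^U-\epsilon}$ for all $w\ge 1$ uniformly, so on $[1,T]$ one gets $1-H^t(w)\ge c\,T^{-1/\gamma_H^U-\epsilon}>0$ uniformly in large $t$. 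Combined with Condition \ref{cond-full} (which guarantees $\varphi$-moments against exactly the weight $w^{\alpha(\epsilon)}$ with $\alpha(\epsilon)=1/\gamma_G-1/\gamma_F^U-1+\epsilon$, i.e. the moment needed for the $\gamma_1^t,\gamma_2^t$ sums to have finite second moments uniformly in $t$ — this is the point of the peculiar exponent), each remainder piece is a product of an $O_\pr(k^{-1/2})$ empirical fluctuation and an $\overline O_\pr(1)$ factor, hence $\overline o_\pr(k^{-1/2})$ after the extra $k^{1/2}$; summing finitely many such pieces preserves the order. A symmetrization/Cauchy--Schwarz step handles the product-of-empirical-quantities cross terms exactly as in \cite[proof of Thm. 1.1]{stute}.

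I expect the main obstacle to be precisely the \emph{uniformity in $t$} of the denominator control and of the moment bounds: in the classical fixed-distribution Stute argument one simply says ``$1-H$ is bounded below on $[0,T]$'' and ``$\int\hat\varphi^2/(1-H)\,dH<\infty$'', but here the relevant sub-distribution is $H^t$, which is changing with $t$ and whose mass is escaping to infinity, so one must show the constants in these bounds can be chosen once and for all for all large $t$. This is not automatic and is exactly what Theorem \ref{thm:potter-F} (uniform Potter) and the specific form of Condition \ref{cond-full} are designed to supply; the covariate $x$ enters here because $1-F^t_{Y|X=x}$ is a \emph{family} of regularly varying tails and one needs the Potter bound to hold simultaneously over $x\in\mathcal X$, which is guaranteed by the assumption that $U_{F_{Y|X=x}}$ (equivalently $\bar F_{Y|X=x}$) can be uniformly decomposed together with $\inf_x c(x)>0$. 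A secondary but routine obstacle is verifying that the algebraic Stute decomposition still closes when $\varphi$ depends on $x$: one checks that the ``$\gamma$-trick'' identities used to pass from $\prod[(j-1)/j]^{\delta_{[n-j+1:n]}}$ to $\gamma_0^t$ only involve the $\delta$'s and $V$'s, so $\varphi(X^\ast_i,\cdot)$ is just a bounded (by $\hat\varphi\le\hat\varphi(T)$, finite under Condition \ref{temp}) multiplicative tag that commutes through every step. Once the truncated version is established, the passage to the full Theorem \ref{3.2} is the standard truncation argument: approximate a general $\varphi$ satisfying Condition \ref{cond-full} by $\varphi\mathbf 1\{w\le T\}$, control the tail contribution $w>T$ uniformly in $t$ again via the uniform Potter bound plus the $(2+\varepsilon)$-moment in Condition \ref{cond-full}, and let $T\to\infty$ after $n\to\infty$.
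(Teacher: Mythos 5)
Your high-level outline is the same as the paper's: condition on $Z_{n-k,n}=t$, use the distributional identity to pass to an i.i.d.\ tail sample of size $k$, run the Stute/\cite{stute-co} algebraic decomposition (now with $\varphi(X^\ast_i,\cdot)$ riding along as a passive tag), and make everything uniform in large $t$ via the uniform Potter bounds. That part is right, and your diagnosis that uniformity is the new difficulty is also right.

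But there is a genuine gap in how you close the remainder estimate. Your summary step — ``each remainder piece is a product of an $O_\pr(k^{-1/2})$ empirical fluctuation and an $\overline O_\pr(1)$ factor, hence $\overline o_\pr(k^{-1/2})$ after the extra $k^{1/2}$'' — is arithmetically false: $k^{1/2}\cdot O_\pr(k^{-1/2})\cdot\overline O_\pr(1)=\overline O_\pr(1)$, not $\overline o_\pr(1)$. DKW plus a uniform lower bound on $1-H^t$ on $[1,T]$ only gives $\overline O_\pr(k^{-1/2})$ for a generic cross term, which is not enough. The reason the remainder is genuinely $\overline o_\pr(k^{-1/2})$ is that the dangerous cross terms are of the form
\[
\int\!\!\int \mathbf 1_{v<w}\frac{\varphi(x,w)\gamma_0^t(w)}{1-H^t(v)}\,\big(H_k^{0,t}-H^{0,t}\big)(\dd v)\,\big(H_k^{11,t}-H^{11,t}\big)(\dd x,\dd w),
\]
i.e.\ they pair \emph{two} centered empirical measures, and the paper handles them by rewriting $V_{k,n}=\tfrac{k-1}{k}U_{k,n}$ as a genuine U-statistic with symmetric kernel $h$; the Hájek projection/degeneracy then gives the extra $k^{-1/2}$, and the unavoidably small pieces ($r^1_{k,n}$, $S^1_{k,n}$, $S^2_{k,n}$) are shown to be $\overline O_\pr(k^{-1})$ because they carry a squared fluctuation. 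Your reference to ``a symmetrization/Cauchy--Schwarz step'' gestures in the right direction but does not substitute for this U-statistics reduction, which is where the order gain actually comes from.

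You also understate what the covariates cost you. The real uniformity-in-$t$ work in the paper is not the denominator lower bound on $[1,T]$ (that is needed, but it is the easy part and is essentially the no-covariate argument of \cite{EKM}); it is showing that the U-statistic parameters $\theta^t$, $\xi_1$, $\xi_2$ converge to finite limits as $t\to\infty$. This requires the disintegration $H^{11,t}(\dd x,\dd v)=H^{1,t}(\dd v)\,F^t_{X\mid Y=v}(\dd x)$, an explicit computation turning $\theta^t$ into an integral of $-\varphi\log(1-G^t)$ against $F^t_{Y\mid X}\otimes F^t_X$, and a dominated-convergence step whose dominating function is built from the uniform Potter bound for $\bar G$ (to control $\log(1-G^t)$) together with the uniform Potter bound for $U_{F_{Y\mid X=x}}$ and the truncation $\hat\varphi(w)=0$ for $w>T$. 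That is the passage where the assumption that $U_{F_{Y\mid X=x}}$ can be uniformly decomposed is actually consumed; your proposal mentions the uniform Potter bounds only for the $1-H^t$ denominator and never for the convergence of the U-statistic moments, so a reader following your outline would not know why those hypotheses are in the theorem.
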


\begin{proof}[Proof of Theorem \ref{thm:cond}]
    
Let $Z_{n-k,n}=t$. Note that $(V_i^*, \delta_i^*,X^*_i)_{i=1}^k$ is an iid sequence and we have that
\begin{align}
    \amsmathbb{F}^{t}_{k,n}(x,y)=\sum_{i=1}^kW_{ik}1\{X_{[i,k]}^*\leq x, V_{i,k}^*\leq y\}
\end{align}
where 
\begin{align*}
    W_{ik}=\frac{\delta^*_{[i:k]}}{n-i+1}\prod_{j=1}^{i-1}\Big[\frac{k-j}{k-j+1}\Big]^{\delta^*_{[j:k]}}
\end{align*}
This implies that we can follow the proof in \cite{stute-co}, where $Z_{i,n}$ is changed with $V_{i,k}^*$ and $\delta_{[i,n]}^*$ with $\delta_{[1,k]}^*$. From \cite{stute-co} we have
\begin{align*}
    &S_{k,n}^{\varphi}=\int \varphi d\amsmathbb{F}_{k,n}^{t}\\
    &=\int \varphi(x,w) \exp\Bigg\{k\int_0^{w}\log\left(1+\frac{1}{k(1-H_{k}^t(z))}\right)\Bigg\}H_{k}^{0,t}(\dd z)H_{k}^{11,t}(\dd x,\dd w)
\end{align*}
where 
\begin{align*}
    &H_{k}^{0,t}(z)=\frac{1}{k}\sum_{i=1}^k I(V^*_{i,k}\leq z, \delta_{[i,k]}^*=0) \\
    &H_{k}^{11,t}(x,z)=\frac{1}{k}\sum_{i=1}^k I(X^*_{[i,k]}\leq x, V^*_{i,k}\leq z, \delta_{[i,k]}^*=1) \\
    &H_{k}^{t}(z)=\frac{1}{k}\sum_{i=1}^k I(V^*_{i,k}\leq z).
\end{align*}
Since the functions above are stepwise constant, we rewrite
\begin{align*}
    S_{k,n}^{\varphi}&=\frac{1}{k}\sum_{i=1}^k \varphi(X_i^*, V_i^*)\gamma_0^t(V_i^*)\delta_i^*(1+B_{ik}^t+C_{ik}^t)\\
    &\quad+\frac{1}{k}\sum_{i=1}^k \frac{1}{2}\varphi(X_i^*, V_i^*)\delta_i^* e^{\Delta_i}(B_{ik}^t+C_{ik}^t)
\end{align*}
where 
\begin{align}
    B_{ik}^t&=k\int_1^{V_i^*}\log\left(1+\frac{1}{k(1-H_{k}^t(z))}\right)H_{k}^{0,t}(\dd z)-\int_1^{V_i^*}\frac{1}{1-H_{k}^t(z)}H^{0,t}_k(\dd z)\label{eq:BB}\\
    C_{ik}^t&=\int_1^{V_i^*}\frac{1}{1-H_{k}^t(z)}H^{0,t}_k(\dd z)-\int_1^{V_i^*}\frac{1}{1-H_{k}^t(z)}H^{0,t}(\dd z)\label{eq:CC}
\end{align}
and $\Delta_i$ is between the first term in the rhs of \eqref{eq:BB} and second term in the rhs of \eqref{eq:CC}.

Since $C_{ik}^t$ is the same as in the non-covariate case, it follows that
\begin{align*}
    &\frac{1}{k}\sum_{i=1}^k \varphi(X_{i}^*, V_i^*)\gamma_0^t(V_i^*)\delta_i^*C_{ik}^t\\
    &= -\int\int\int \frac{1_{v<u,v,w}\varphi(w)\gamma_0^t(w)}{(1-H^t(v))^2}H_k^t(du)H_k^{0,t}(\dd v)H_k^{11,t}(\dd x,\dd w) \\
    &\quad+2\int\int 1_{v<w}\frac{\varphi(x,w)\gamma_0^t(w)}{1-H^t(v)}H_k^{0,t}(\dd v)H_k^{11,t}(\dd x,\dd w) \\
    &\quad- \int\int 1_{v<w}\frac{\varphi(x,w)\gamma_0^t(w)}{1-H^t(v)}H^{0,t}(\dd v)H_k^{11,t}(\dd x, \dd w)+ r^1_{k,n}
\end{align*}
where
\begin{align*}
    &r^1_{k,n}=\\
    &\int\int \varphi(x,w)\gamma_0^t(w)I(z<w) \frac{(H_{k}^t(z)-H^t(z))^2}{(1-H^t(z))^2(1-H_k^t(z))}H_k^{0,t}(\dd z) H_k^{11,t}(\dd x, \dd w).
\end{align*}
\begin{lemma}
The term    \begin{align*}
        \int\int 1_{v<w}\frac{\varphi(x,w)\gamma_0^t(w)}{1-H^t(v)}(H_k^{0,t}(\dd v)-H^{0,t}(\dd v))(H_k^{11,t}(\dd x,\dd w)-H^{11,t}(\dd x,\dd w))
    \end{align*}
    is $\overline{o}_\pr(k^{-1/2})$.
\end{lemma}
\begin{proof}
    Let
    \begin{align*}
        V_{k,n}:&=\int \int 1_{v<w}\varphi(x,w)\gamma_0^t(w)/(1-H^t(v))H_k^{0,t}(\dd v)H_k^{11,t}(\dd x, \dd w) \\
        &=\frac{1}{k^2}\sum_{i=1}^k\sum_{j=1}^k 1_{V_i^*<V_j^*}(1-\delta_i^*)\delta_j^*\varphi(X_j, V_j^*)\gamma_0^t(V_j^*)/(1-H^t(V_i^*)).
    \end{align*}
    By following the proof of lemma A.2 in the \cite{EKM}, we have
    \begin{align*}
        U_{k,n}:=\frac{k}{k-1}V_{k,n}=\frac{2}{k(k-1)}\sum_c h\left( \begin{pmatrix}
            X_i^* \\ V^*_i \\ \delta_i^*
        \end{pmatrix}, \begin{pmatrix}
            X_j^* \\ V^*_j \\ \delta_j^*
        \end{pmatrix} \right)
    \end{align*}
    where $h: \amsmathbb{R}^{m+1+1}\times \amsmathbb{R}^{m+1+1} \rightarrow \amsmathbb{R}$ is given by
    \begin{align*}
        &h(\mathbf{x}, \mathbf{y})=\\
        &\frac{1}{2}\Big(\frac{I(x_2<y_2)(1-x_3)y_3\varphi(\mathbf{y_1},y_2)\gamma_0^t(y_2)}{1-H^t(x_2)} +\frac{I(y_2<x_2)(1-y_3)x_3\varphi(\mathbf{x_1},x_2)\gamma_0^t(x_2)}{1-H^t(y_2)}\Big)
    \end{align*}
    where $\mathbf{x_1}, \mathbf{y_1}\in \amsmathbb{R}^m$, and $\sum_c$ denotes summation over the $\binom{k}{2}$ combinations of 2 distinct elements $\{i_1,i_2\}$ from $\{1,\ldots,k\}$. Since $h(\mathbf{x},\mathbf{y})$ is symmetric, we have $U_{k,n}$ is a U-statistic. By the argumentation on page 23 in \cite{EKM} the results then follow if we show that the following quantities are asymptotically independent of $t$:
    \begin{align*}
        \theta^t=\E h\left( \begin{pmatrix}
            X_1^* \\ V^*_1 \\ \delta_1^*
        \end{pmatrix}, \begin{pmatrix}
            X_2^* \\ V^*_2 \\ \delta_2^*
        \end{pmatrix} \right) \\
        \xi_1 =\text{Var} h_1\left( \begin{pmatrix}
            X_1^* \\ V^*_1 \\ \delta_1^*
        \end{pmatrix} \right) \\
        \xi_2 = \text{Var} h\left( \begin{pmatrix}
            X_1^* \\ V^*_1 \\ \delta_1^*
        \end{pmatrix}, \begin{pmatrix}
            X_2^* \\ V^*_2 \\ \delta_2^*
        \end{pmatrix} \right)
    \end{align*}

    where $h_1(\mathbf{x})=\E h\left(\mathbf{x}, \begin{pmatrix}
            X_1^* \\ V^*_1 \\ \delta_1^*
        \end{pmatrix}\right)$.
    First note that we have $H^{11,t}(\dd x,\dd v)= H^{1,t}(\dd v) F_{X|Y=v}^t(\dd x)$, where $H^{1,t}(z):=\pr(Z<tz, \delta=1 | Z>t)$. So
    \begin{align}
        \theta^t&=\int\int_1^{\infty} \frac{1_{v<w}\varphi(x,w)\gamma_0^t(w)}{1-H^t(v)}H^{0,t}(\dd v)H^{11,t}(\dd x,\dd w)\nonumber\\\nonumber
        &= \int_1^{\infty}\int \int_1^w \varphi(x,w)\frac{1}{1-G^t(w)}\frac{1}{(1-G^t(v))}G^t(\dd v) F^t_{X|Y=w}(\dd x) H^{1,t}(\dd w)\\\nonumber
        &= \int_1^{\infty}\int \int_1^w \varphi(x,w)\frac{1}{(1-G^t(v))}G^t(\dd v) F^t_{X|Y=w}(\dd x) F^{t}_Y(\dd w)\\\nonumber
        &= \int_1^{\infty}\int -\varphi(x,w)\log(1-G^t(w)) F^t_{X|Y=w}(\dd x) F_Y^t(\dd w)\\\nonumber
        &=\int\int_1^{\infty} -\varphi(x,w)\log(1-G^t(w)) F^t_{Y|X=x}(\dd w) F^t_{X}(\dd x)\\
        &\rightarrow  \int\int_1^{\infty} \varphi(x,w)\frac{1}{\gamma_G\gamma_F(x)}\log(w)w^{-1/\gamma_F(x)-1}\dd w F_X^{\circ}(\dd x):=\theta.
        \label{eq:theta}
    \end{align}
    Hence the limit is asymptotically independent of $t$. Here, we have used the dominated convergence theorem. We show now that this was possible since we may bound the integrand. Fist consider $-\varphi(x,w)\log(1-G^t(w))$. By Potter bounds we have for large enough $T_1$, there exists $\epsilon_a,\epsilon_c>0$ such that
    \begin{align*}
        1-G^t(w)=\frac{1-G(tw)}{1-G(w)}\leq  (1+\epsilon_a)w^{-1/\gamma_G+\epsilon_c}
    \end{align*}
    for $t>T_1$. So for $t>T_1$, we have
    \begin{align*}
        &-\varphi(x,w)\log(1-G^t(w)) \le \varphi(x,w)(\log(w)(1/\gamma_G-\epsilon_c)-\log(1+\epsilon_a)) \\
        & \leq \varphi(x,w)(\log(w)(1/\gamma_G^L+\epsilon_c)-\log(1-\epsilon_a)) \\
        & \leq A_0I(w\in[1,1+\epsilon_c])+A_1I(w>1+\epsilon_c)\varphi(x,w)\log(w)
    \end{align*}
    for some positive constants $A_0,A_1$. Thus
    \begin{align*}
        &\int\int_1^{\infty} -\varphi(x,w)\log(1-G^t(w)) F^t_{Y|X=x}(\dd w) F^t_{X}(\dd x)\\
        &\leq A_2+A_3\int\int_1^{\infty} \varphi(x,w)\log(w) F^t_{Y|X=x}(\dd w) F^t_{X}(\dd x)
    \end{align*}
    for some positive constants $A_2,A_3$.    
    By Condition \ref{temp} there exists a $T>0$ such that $\varphi(x,w)=0$ for all $x$ and $w>T$. Since $\hat{\varphi}(w)$ exists, we can find an additional continuous function, $C_1(w)$, such that $C_1(w)\geq |\varphi(x,w)|$ and $C_1(w)=0$ for $w>T+\delta_1$ for some $\delta_1>0$. Now let $U_{F^t_{Y|X=X^t}}$ be the tail-quantile function of $F^t_{Y|X=x}$ at $x=X^t$, that is, a random variable. Note that the following identity holds:
    \begin{align*}
        &\int\int_1^{\infty} \varphi(x,w)\log(w) F^t_{Y|X=x}(\dd w|x)F^t_{X}(\dd x)\\
        &\leq\E\left[\E\left[C_1(U_{F^t_{Y|X=X^t}}(\xi))\log(U_{F^t_{Y|X=X^t}}(\xi)))|X^t\right]\right]
    \end{align*}
   where $\xi$ is a standard Pareto random variable, independent of $X^t$, and
    \begin{align*}
        U_{F^t_{Y|X=X^t}}(y)=\frac{U_{F^t_{Y|X=X^t}}(y/(1-{F^t_{Y|X=X^t}}(t)))}{U_{F^t_{Y|X=X^t}}(1/(1-{F^t_{Y|X=X^t}}(t)))}.
    \end{align*}
    By uniform Potter bounds, we have that for $\epsilon_{a_1},\epsilon_{c_1}>0$, the following holds for $t$ large enough, 
    \begin{align*}
        &\E\left[\E\left[C_1(U_{F^t_{Y|X=X^t}}(\xi))\log(U_{F^t_{Y|X=X^t}}(\xi)))\right]\right]\\
        &\leq \E\left[\E\left[C_1^*\log\left((1+\epsilon_{c_1})T^{\gamma^U+\epsilon_{a_1}}\right)|X^t\right]\right]<\infty
    \end{align*}
    where  $C_1^*=\max_{w\in [1,T+\delta_1]}C_1(w)$ which exists and is finite since $C_1$ is continuous. 
    Hence the dominated convergence theorem can be applied.

Similar can bounds for $\xi_1$ and $\xi_2$ be found, and we only show the latter. We have 
    \begin{align*}
        &\xi_2=\\
        &\int \int_1^{\infty} \frac{I(v<w)\varphi(x,w)^2\gamma_0^t(w)^2}{(1-H^t(v))^2}H^{0,t}(\dd v)H^{11,t}(\dd w)-(\theta^t)^2 \\
        &= \int \int_1^{\infty}\int_1^w \varphi(x,w)^2 \frac{1-F^t(v)}{(1-H^t(v))(1-G^t(w))}G^t(\dd v)F_{Y|X=x}^t(\dd w)F^t_{X}(\dd x)-(\theta^t)^2 \\
        &\rightarrow  \int \frac{\gamma_H(x)}{\gamma_G(x)\gamma_F(x)} \int_1^{\infty} \varphi(x,w)^2 w^{1/\gamma_H(x)-1}\dd w F_X^{\circ}(\dd x)-\theta^2,
    \end{align*}
as desired. The result now follows as in \cite{EKM}.
\end{proof}

\begin{lemma}
The expression
    \begin{align*}
       & \int\int\int \frac{1_{v<u,v,w}\varphi(x,w)\gamma_0^t(w)}{(1-H^t(v))^2}\big(H_k^t(\dd u)H_k^{0,t}(\dd v)H_k^{11,t}(\dd x,\dd w)\\
        &\quad-H^t(\dd u)H^{0,t}(\dd v)H_k^{11,t}(\dd w)
        -H^t(du)H_k^{0,t}(\dd v)H^{11,t}(\dd x,\dd w)\\
        &\quad-H_k^t(\dd u)H^{0,t}(\dd v)H^{11,t}(\dd x,\dd w)+2H^t(\dd u)H^{0,t}(\dd v)H^{11,t}(\dd x,\dd w)\big)
    \end{align*}
    is $\overline{o}_\pr(k^{-1/2})$.
\end{lemma}

\begin{proof}
    Similar to the previous lemma.
    \end{proof}

By the two previous lemmas we have that $\frac{1}{k}\sum_{i=1}^k \varphi(X_i^*, V_i^*)\gamma_0^t(V_i^*)\delta_i^*C_{ik}^t=\overline{O}_\pr(k^{-1/2})$. We turn to the remainder part.

\begin{lemma}
    \label{lemma:A3}
The remainder satisfies
    \begin{align*}
        |r_{k,n}^1|=\overline{O}_\pr(k^{-1}).
    \end{align*}
\end{lemma}

\begin{proof}
    We have by the temporary Condition \ref{temp}
    \begin{align*}
         |r_{k,n}^1|&\leq \int |\varphi(x,w)|\gamma_0^t(w) H_k^{11,t}(\dd x, \dd w)\\
        & \quad \times\int I(z<T)\frac{(H_{k}^t(z)-H^t(z))^2}{(1-H^t(z))^2(1-H_k^t(z))}H_k^{0,t}(\dd z) \\
         &=: I_{k,1}\times I_{k,2}
    \end{align*}
    It follows directly from \cite{EKM} that $I_{k,2}=\overline{O}_\pr(k^{-1})$. By the uniform Potter bounds and the temporary condition, we get
    \begin{align*}
        \pr(I_{k,1}>\epsilon)&\le \frac{1}{\epsilon} \int \int_1^{\infty} |\varphi(x,w)| F_{Y|X=x}^t(\dd w)F^t_{X}(\dd x) \\&\leq \frac{1}{\epsilon}K\int_1^{\infty} C_1(w) w^{-1/\gamma_F^U-1+\delta}\dd w 
    \end{align*}
    where $C_1(w)$ is some continuous function satisfying $C_1(w)\geq |\varphi(x,w)|$ and $C_1(w)=0$ for $w>T+\delta_1$ for some $\delta_1>0$. Hence, we have that $I_{k,1}=\overline{O}_\pr(1)$.
\end{proof}

\begin{lemma}The expression
    \begin{align*}
        S_{k,n}^1=\frac{1}{k}\sum_{i=1}^k \varphi(X_i^*, V_i^*)\gamma_0^t(V_i^*)\delta_i^*B_{ik}^t=\overline{O}_\pr(k^{-1}).
    \end{align*}
\end{lemma}
\begin{proof}
    Follows by Lemma A.6 in \cite{EKM}.
\end{proof}

\begin{lemma}
    The expression
    \begin{align*}
        S_{k,n}^2=\frac{1}{2k}\sum_{i=1}^k |\varphi(X_i^*, V_i^*)|\delta_i^*e^{\Delta_i}[B_{ik}^t+C_{ik}^t]^2=\overline{O}_\pr(k^{-1}).
    \end{align*}
\end{lemma}
\begin{proof}
    Follows by Lemma A.7 in \cite{EKM}.
\end{proof}
By collecting all the lemmas we complete the proof of the truncated case.
\end{proof}

The next lemma is a useful tool towards showing Theorem \ref{3.2}.
\begin{lemma}
    Assume that $U_{F_{Y|X=x}}$ can be uniformly decomposed and let Condition \ref{cond-full} hold. Then for large $t$, we have
    \begin{align}
        \int\varphi(x,v)^2 (\gamma^t_0(v))^2dH^{11,t}(x,v) <\infty\\
        \int_1^{\infty}  |\varphi|(x,v)\gamma_0^t(v)\sqrt{C^t} dH^{11,t}(\dd x,\dd v)<\infty
        \label{prop12}
    \end{align}
    where $$C^t(x)=\int_1^{x}\frac{G^t(\dd y)}{(1-H^t(y)(1-G^t(y))}.$$
    \label{Prop}
\end{lemma}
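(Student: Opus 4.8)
The plan is to peel off the censoring weights $\gamma_0^t$ analytically and then reduce both bounds in \eqref{prop12} to deterministic moment integrals governed by Condition~\ref{cond-full}. First I would rewrite the integrands. Under \eqref{dependence_assumption} one has $\pr(X\in\dd x,Z\in\dd z,\delta=1)=\bar G(z)F_{Y|X=x}(\dd z)F_X(\dd x)$ and $1-H(z)=\overline F_Y(z)\bar G(z)$, with $\overline F_Y(z):=\int\bar F_{Y|X=u}(z)F_X(\dd u)$; dividing by $1-H(t)$ produces the factorisation, and the same computation applied to $H^{0,t}$ and $H^t$ gives the usual identity of \cite{stute-co},
\begin{align*}
H^{11,t}(\dd x,\dd v)=\bar G^t(v)\,F^t_{Y|X=x}(\dd v)\,F^t_X(\dd x)\quad(v\ge 1),\qquad \gamma_0^t(v)=1/\bar G^t(v).
\end{align*}
Substituting, the $\gamma_0^t$ weights cancel the $\bar G^t$ carried by the measure, so that
\begin{align*}
\int\varphi^2(\gamma_0^t)^2\dd H^{11,t}&=\int\int\varphi(x,v)^2\,\bar G^t(v)^{-1}\,F^t_{Y|X=x}(\dd v)F^t_X(\dd x),\\
\int|\varphi|\,\gamma_0^t\sqrt{C^t}\dd H^{11,t}&=\int\int|\varphi(x,v)|\,\sqrt{C^t(v)}\,F^t_{Y|X=x}(\dd v)F^t_X(\dd x),
\end{align*}
where $C^t(\cdot)$ is the increasing function of the statement, evaluated at $v$. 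Since $F^t_X$ is a probability measure and $|\varphi(x,v)|\le\hat\varphi(v)$, only the inner $v$-integrals remain, uniformly in $x$.

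Next I would invoke the uniform Potter bounds of Appendix~\ref{sec:potter}. For large $t$, ordinary Potter for $G$ gives $\bar G^t(v)^{-1}\le Cv^{1/\gamma_G+\epsilon}$ and $\bar G^t(v)\ge cv^{-1/\gamma_G-\epsilon}$ uniformly in $v>1$; since uniform decomposability of $U_{F_{Y|X=x}}$ supplies the uniform bounds of Theorems~\ref{thm:potter-F}--\ref{thm:potter-U}, and $1/\gamma_F(x)\ge 1/\gamma_F^U$, also $\bar F^t_{Y|X=x}(v)\le Cv^{-1/\gamma_F^U+\epsilon}$ uniformly in $x,v$; and using that $1-H\in RV_{-(1/\gamma_F^U+1/\gamma_G)}$ (a mixture of $\bar F_{Z|X=x}\in RV_{-1/\gamma_H(x)}$ with heaviest index $1/\gamma_F^U+1/\gamma_G$, cf.\ Lemma~\ref{FX-dist}) together with the lower bounds above, a direct estimate of the integral defining $C^t$ gives $C^t(v)\le Cv^{1/\gamma_F^U+1/\gamma_G+\epsilon}$. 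Writing $\int_1^\infty g\,\dd F^t_{Y|X=x}=g(1)+\int_1^\infty\bar F^t_{Y|X=x}\,\dd g$ and using the monotonicity of $\hat\varphi$ to discard the boundary term at infinity and the negative part of $\dd g$, the two inner integrals are bounded by
\begin{align*}
\hat\varphi(1)^2+C\int_1^\infty\hat\varphi(v)^2\,v^{\alpha(0)+\epsilon'}\dd v\qquad\text{and}\qquad \hat\varphi(1)+C\int_1^\infty\hat\varphi(v)\,v^{(\alpha(0)-1)/2+\epsilon'}\dd v,
\end{align*}
with $\alpha(0)=1/\gamma_G-1/\gamma_F^U-1$ and $\epsilon'>0$ arbitrarily small.

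It then remains to see these deterministic integrals are finite. They share the same critical exponent — which is exactly why the statement carries $\sqrt{C^t}$ rather than $C^t$ — and Condition~\ref{cond-full} is more than enough: by Hölder's inequality, $\int_1^\infty\hat\varphi^{2+\varepsilon}w^{\alpha(\varepsilon)}\dd w<\infty$ with $\alpha(\varepsilon)=\alpha(0)+\varepsilon$ yields both $\int_1^\infty\hat\varphi^2 w^{\alpha(0)+\epsilon'}\dd w<\infty$ and $\int_1^\infty\hat\varphi\,w^{(\alpha(0)-1)/2+\epsilon'}\dd w<\infty$ once $\epsilon'$ is small relative to $\varepsilon$, the needed slack being the ``$+\varepsilon$'' in the weight; here one uses the light‑censoring regime ($\gamma_F^U<\gamma_G$, equivalently $\alpha(0)<1$) in which Condition~\ref{cond-full} is non‑vacuous for the envelopes of interest. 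This gives the claim for all $t$ past the common threshold of the Potter bounds.

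I expect two points to be the real work. The first is Step~2: extracting the uniform Potter inequalities from the bare hypothesis that $U_{F_{Y|X=x}}$ is uniformly decomposable — one must either verify the positivity/uniform‑convergence hypotheses behind Theorems~\ref{thm:potter-F}--\ref{thm:potter-U} (in particular $\inf_x c(x)>0$) or work only with the one‑sided bounds that survive without it, and likewise pin down the regular variation of the mixture $1-H$ via Lemma~\ref{FX-dist}. The second is the closing estimate: the two integrals lie essentially at the borderline of convergence, so the small margin afforded by Condition~\ref{cond-full} has to be allocated simultaneously to the Potter $\epsilon$'s and to the Hölder exponents. Everything else is a routine adaptation of the corresponding computations in \cite{EKM,stute-co}.
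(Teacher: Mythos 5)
Your proof is correct and tracks the paper's own argument closely: factor $H^{11,t}=\bar G^t\,F^t_{Y|X=x}\,F^t_X$ so the $\gamma_0^t$ weights cancel, invoke the uniform Potter bounds of Appendix~\ref{sec:potter} for $\bar G^t$, $\bar F^t_{Y|X=x}$ and $C^t$, and close via Condition~\ref{cond-full}, with your explicit H\"older step supplying what the paper leaves as ``follows from Condition~\ref{cond-full}.'' One small correction: $\gamma_F^U<\gamma_G$ gives $\alpha(0)<-1$, so it is not \emph{equivalent} to $\alpha(0)<1$; it merely implies it, and the weaker $\alpha(0)<1$ is all your H\"older exponent bookkeeping actually requires.
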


\begin{proof}
    By using the using similar arguments as to show \eqref{eq:theta}, we get
    \begin{align*}
        &\int \varphi(x,w)^2(\gamma_0^t(x))^2 H^{11,t}(\dd x,\dd w)\\
        &= \int_1^{\infty} \int \varphi(x,w)^2(\gamma_0^t(x))^2 F^t_{X|Y=w}(\dd x|w)H^{1,t}(\dd w)\\
        &=\int_1^{\infty}\int \varphi(x,w)^2 \frac{1}{(1-G^t(w))^2}F^t_{X|Y=w}(\dd x)H^{1,t}(\dd w) \\
        &=\int\int_1^{\infty} \varphi(x,w)^2 \frac{1}{(1-G^t(w))}F^t_{Y|X=x}(\dd w)F^t_{X}(\dd x)  \\
        &\leq  C_1\int_1^{\infty} \hat{\varphi}(w)^2 w^{1/\gamma_G^L-1/\gamma_F^U-1+\epsilon}\dd w < \infty
    \end{align*}
for some constant $C_1$. The last inequality follows from Condition \ref{cond-full}. For \eqref{prop12}, note that by using Potter bounds, we have
    \begin{align*}
         &\int_1^{\infty}  |\varphi(x,v)|\sqrt{C^t}\gamma_0(v) dH^{11,t}(\dd x,\dd w)\\
         &=\int |\varphi(x,v)|\sqrt{C^t}F_{X,Y}^t(\dd x,\dd w) \\
         &\leq \int_1^{\infty} \hat{\varphi}(w)w^{(2\gamma_G)^{-1}-(2\gamma_F^U)^{-1}-1+\epsilon/2}\dd w \\
         &\leq  \left(\int_1^{\infty} \hat{\varphi}(w)^2w^{1/\gamma_G-1/\gamma_F^U-2+\epsilon}\dd w\right)^{1/2}<\infty.
    \end{align*}
The first two inequalities follow from \cite{EKM}, while the last follows from Condition \ref{cond-full}.
\end{proof}

\begin{proof}[Proof of Theorem \ref{3.2}]
For a given $\epsilon>0$, pick large enough $T>0$ such that $\Tilde{\varphi}$ satisfies
\begin{align*}
    \int_1^{\infty} (\overline{\varphi-\Tilde{\varphi}})(w)^2w^{a(\epsilon)}\dd w\leq \epsilon.
\end{align*}
Such $T$ exists by Condition \ref{cond-full}. By Lemma \ref{Prop}, we can then find an $\epsilon_1>0$ vanishing together with $\epsilon$, such that
\begin{align}
    &\int  (\varphi-\Tilde{\varphi})(x,w)^2 (\gamma_0^t)^2 H^{11,t}(\dd x, \dd w)\leq \epsilon_1 \label{HH}
\end{align}
and
\begin{align}
\int |\varphi-\Tilde{\varphi}|(x,w) \sqrt{C^t} H^{11,t}(\dd x,\dd w)\le \epsilon_1 \label{abc}
\end{align}
for large $t$. A possible choice for $\Tilde{\varphi}(x,v)$ is $=1_{v\leq T}\varphi(x,v)$. By argumentation from \cite{EKM}, the theorem follows if we can show that for $\varphi_1=\varphi-\Tilde{\varphi}$, we have
\begin{align*}
    k^{1/2}\left( \int_1^{\infty}\varphi_1 \dd\amsmathbb{F}_{k,n}^t-\int_1^{\infty} \varphi_1 \dd F^t\right)=\overline{O}_\pr(\epsilon_1^{1/2}).
\end{align*}
 From \cite{stute-co} we have that 
\begin{align*}
    S_{k,n}^{\varphi}&=\int \varphi d\amsmathbb{F}_{k,n}^t\\
    &=\int \varphi(x,w) \exp\Bigg\{k\int_0^{w-}\log\left(1+\frac{1}{k(1-H_k^t(z)}\right)H_k^{0,t}(\dd z)\Bigg\}H_k^{11,t}(\dd x,\dd w) \\
    &= \sum_{i=1}^k \varphi(X_i^*,V_i^*)\delta_i^* \exp\Bigg\{k\int_0^{w-}\log\left(1+\frac{1}{k(1-H_k^t(z)}\right)H_k^{0,t}(\dd z)\Bigg\} \\
    &= \sum_{i=1}^k \varphi(X_i^*,V_i^*) \delta_i^*\gamma_0^t(V_i) \exp\big\{B_{ik}^t+C_{ik}^t\big\}
\end{align*}
so that
\begin{align}
     &k^{1/2}\left( \int_1^{\infty}\varphi_1 d\amsmathbb{F}_{k,n}^t-\int_1^{\infty} \varphi_1 dF^t\right)\\
     &= k^{-1/2}\sum_{i=1}^k \left(\varphi(X_i^*,V_i^*) \delta_i^*\gamma_0^t(V_i)- \int_1^{\infty}\varphi_1dF^t\right)\nonumber\\
     &\quad+ k^{-1/2}\sum_{i=1}^k \varphi(X_i^*,V_i^*) \delta_i^*\gamma_0^t(V_i) (\exp(B_{ik}^t+C_{ik}^t)-1). \label{eq:thm31}
\end{align}
Using \eqref{HH}, the variance of the first sum is seen to be
\begin{align*}
    &\mbox{Var}\left(k^{-1/2}\sum_{i=1}^k \left(\varphi_1(X_i^*,V_i^*) \delta_i^*\gamma_0^t(V_i)- \int_1^{\infty}\varphi_1dF^t\right)\right)\\
    &=\mbox{Var}(\varphi_1(X_i^*,V_i^*) \delta_i^*\gamma_0^t(V_i)) \\
    &\leq \E \left[(\varphi_1(X_i^*,V_i^*) \delta_i^*\gamma_0^t(V_i))^2\right] \leq \epsilon_1.
\end{align*}
Thus by Chebyshev's inequality the first term in \eqref{eq:thm31} is $\overline{O}_\pr(\epsilon_1^{1/2})$. 
Concerning the second term of \eqref{eq:thm31} first note that  
\begin{align*}
    &k^{-1/2}\sum_{i=1}^k \varphi(X_i^*,V_i^*) \delta_i^*\gamma_0^t(V_i) (\exp(B_{ik}^t+C_{ik}^t)-1)\\
    &\leq k^{-1/2}\sum_{i=1}^k |\varphi(X_{[i,k]},V^*_i)|\delta_i^*\gamma_0^t(V_i^*)(|B_{ik}^t|+|C_{ik}^t|)\exp(|B_{ik}^t|+|C_{ik}^t|).
\end{align*}
From \cite{EKM} we have that $\exp(|B_{ik}^t|+|C_{ik}^t|)=\overline{O}_\pr(1)$. So it remains to bound
\begin{align*}
    k^{-1/2}\sum_{i=1}^k |\varphi(X_{i}^*,V^*_i)|\delta_i^*\gamma_0^t(V_i^*)(|B_{ik}^t|+|C_{ik}^t|).
\end{align*}
Note that
\begin{align*}
    &k^{-1/2}\sum_{i=1}^k |\varphi(X^*_{i},V^*_i)|\delta_i^*\gamma_0^t(V_i^*)|B_{ik}^t|\\
    &\leq k^{-1}\sum_{i=1}^k |\varphi(X^*_{i},V^*_i)\delta_i^*\gamma_0^t(V_i^*)\left( \int_{-\infty}^{V^*_i}\frac{H_k^{0,t}}{(1-H_k^t(z))^2}\right)^{1/2}.
\end{align*}
But this expression is now bounded from above by \cite{stute} by using \eqref{abc} in place of (2.7). Similarly
\begin{align*}
    k^{-1/2}\sum_{i=1}^k |\varphi(X^*_{i},V^*_i)|\delta_i^*\gamma_0^t(V_i^*)|C_{ik}^t|
\end{align*}
is bounded from above by following the last part of the proof of Theorem 1 in \cite{stute}.
\end{proof}

\subsection{Proof of Theorem \ref{thm:weak-con} (weak consistency)}
\begin{proof}[Proof of Theorem \ref{thm:weak-con}]
By Theorem \ref{3.2}, we have
\begin{align*}
    S_{k,n}(\varphi)=\frac{1}{k}\sum_{j=1}^k W_j +r_{k,n}
\end{align*}
where $\{W_j\}_{j=1}^k$ are i.i.d given $Z_{n-k,n}=t$, and $\sqrt{k}r_{k,n}$ is $\overline{o}_\pr(1)$. Denote
\begin{align*}
    W^*=\varphi(X^*,V^*)\gamma_0^t(V^*)\delta^*+\gamma_1^t(V^*)(1-\delta^*)-\gamma_2^t(V^*)
\end{align*}
where $(X^*,V^*, \delta^*)\overset{d}{=}(X^*_1,V_1^*, \delta^*_1)$.
Note that we can bound $\E W^*$ by using the same arguments as in proof of Theorem \ref{thm:decomposition}, and hence by the dominated convergence theorem, we have   
\begin{align*}
    \E W^*&=\E [\varphi(X^*,V^*)\gamma^t_0 (V^*)\delta^*] \\
    &=\int \varphi(x,v) \frac{1}{1-G^t(v)}H^{11,t}(\dd x,\dd v) \\
    &= \int \int \varphi(x,v) \frac{1}{1-G^t(v)}F_{X|Y=v}^t(\dd x) H^{1,t}(\dd v)  \\
    &= \int \int \varphi(x,v) F^t_{X|Y=v}(\dd x)F_Y^t(\dd v) \\
    &= \int \int \varphi(x,v) F_{X,Y}^t(\dd x,\dd v) \\
    & \rightarrow \int \int \varphi(x,v) F_{X,Y}^{\circ}(\dd x,\dd v)
\end{align*}
where $F^{\circ}$ is the limit of $F^t$ as $t\rightarrow \infty$. Then by following \cite{EKM}, we have $S_{k,n}\stackrel{\pr}{\to} S_{\circ}(\varphi)$.
\end{proof}

\subsection{Proof of Theorem \ref{thm:normal1} (first CLT)}
\begin{proof}[Proof of Theorem \ref{thm:normal1}]
Follows directly from the analogous proof in \cite{EKM}.
\end{proof}

\subsection{Proof of Theorem \ref{THM:nlast} (second CLT)}
We begin with two preliminary lemmas. Here we let $L_{Y|X=x}(y)$ have the Karamata representation 
\begin{align*}
        h(x,y)=c(x,y)\exp\left( \int_1^x \delta(x,u)/u \,\dd u \right), \quad x\geq 1
    \end{align*}
    where $c$ (is positive) and $\delta$ are measurable function, where $c(x,y)\rightarrow c(x)$ uniformly when $y\rightarrow \infty$ and $\delta(x,y)\rightarrow \delta(x)$ uniformly when $y\rightarrow \infty$. 
\begin{lemma}
    Let $A(t)$ be a family of subsets of $\mathcal{X}$, and define $\gamma_F^A(t)=\max_{x\in A(t)} \gamma_F(x)$. Assume that $\delta(x,u)$ and $c(x)$ are bounded, and that there exists $T>0$ such that for all $u>T$ it holds that $-u^{-\epsilon}<\delta(x,u)<u^{-\epsilon}$ for some $\epsilon>0$, and . Then
    \begin{align*}
         \pr(X\in A(t)| Y>t)\rightarrow 0
    \end{align*}
    if $t^{-1/\gamma_F^A(t)+1/\gamma_F^U}\rightarrow 0$.
    \label{lemma:prop}
\end{lemma}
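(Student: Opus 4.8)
The plan is to write $\pr(X\in A(t)\mid Y>t)$ as a ratio of integrals, bound the numerator from above at the rate $t^{-1/\gamma_F^A(t)}$ and the denominator from below at the rate $t^{-1/\gamma_F^U}$ up to a slowly varying correction, and then balance the two exponents using the rate hypothesis. Writing $\bar F_{Y|X=x}(t)=L_{Y|X=x}(t)\,t^{-1/\gamma_F(x)}$ and conditioning on $X$,
\begin{align*}
\pr(X\in A(t)\mid Y>t)=\frac{\int_{A(t)}L_{Y|X=x}(t)\,t^{-1/\gamma_F(x)}\,F_X(\dd x)}{\int_{\mathcal X}L_{Y|X=x}(t)\,t^{-1/\gamma_F(x)}\,F_X(\dd x)}.
\end{align*}

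First I would record the control on the slowly varying parts implied by the hypotheses on $c$ and $\delta$. From the Karamata representation $L_{Y|X=x}(t)=c(x,t)\exp\!\big(\int_1^t\delta(x,u)/u\,\dd u\big)$, splitting $\int_1^t=\int_1^T+\int_T^t$ and using that $\delta$ is bounded on $[1,T]$ while $|\delta(x,u)|\le u^{-\epsilon}$ for $u>T$, the exponent stays bounded in absolute value by a constant independent of $x$ and $t$; combined with $c(x,t)\to c(x)$ uniformly and $c$ bounded this yields $L_{Y|X=x}(t)\le M<\infty$ for all $x$ and all large $t$. The same splitting shows $\int_1^t\delta(x,u)/u\,\dd u$ converges as $t\to\infty$, hence $L_{Y|X=x}(t)\to\ell(x):=c(x)\exp\!\big(\int_1^\infty\delta(x,u)/u\,\dd u\big)\in(0,\infty)$ for each $x$. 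For the numerator, on $A(t)$ one has $\gamma_F(x)\le\gamma_F^A(t)$, so $t^{-1/\gamma_F(x)}\le t^{-1/\gamma_F^A(t)}$ for $t\ge1$, whence the numerator is at most $M\,t^{-1/\gamma_F^A(t)}$. For the denominator I would fix a small $\eta>0$, set $B_\eta=\{x\in\mathcal X:\gamma_F(x)>\gamma_F^U-\eta\}$, which carries positive $F_X$-mass because $\gamma_F^U$ is the maximal tail index attained on the support of $X$, and bound below by restricting the integral to $B_\eta$: there $t^{-1/\gamma_F(x)}\ge t^{-1/(\gamma_F^U-\eta)}$, and by dominated convergence ($L_{Y|X=x}(t)\le M$) $\int_{B_\eta}L_{Y|X=x}(t)\,F_X(\dd x)\to\int_{B_\eta}\ell(x)\,F_X(\dd x)=:2m_\eta>0$, so the denominator exceeds $m_\eta\,t^{-1/(\gamma_F^U-\eta)}$ for $t$ large.

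Combining the two bounds gives, for all large $t$,
\begin{align*}
\pr(X\in A(t)\mid Y>t)\le\frac{M}{m_\eta}\;t^{\,-1/\gamma_F^A(t)+1/(\gamma_F^U-\eta)}.
\end{align*}
When the gap $\gamma_F^U-\gamma_F^A(t)$ is bounded below by a fixed $\delta_0>0$ — precisely the situation in which this lemma feeds into Lemma \ref{FX-dist}, where $A$ is a fixed set with $\max_{x\in A}\gamma_F(x)<\gamma_F^U$ — taking $\eta=\delta_0/2$ makes the exponent negative and bounded away from $0$, so the right-hand side tends to $0$ and the claim follows. In the fully general case one must instead let $\eta=\eta(t)\downarrow0$ slowly, so that $m_{\eta(t)}$ does not vanish faster than the gain supplied by $t^{-1/\gamma_F^A(t)+1/\gamma_F^U}\to0$; controlling this trade-off between the $F_X$-mass concentrating near the maximal tail index and the separation of the decay exponents is the main obstacle, and it is exactly where the rate assumption $t^{-1/\gamma_F^A(t)+1/\gamma_F^U}\to0$ together with the extra regularity imposed on $c$ and $\delta$ is used.
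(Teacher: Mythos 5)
Your strategy is the paper's strategy---bound the conditional probability as a ratio, control the slowly varying factors uniformly via the Karamata representation and the hypotheses on $c$ and $\delta$, bound the numerator by $M\,t^{-1/\gamma_F^A(t)}$, and bound the denominator below by restricting the integral to covariates near the maximal index. The derivation of the uniform upper bound on $L_{Y|X=x}$ is correct and matches the paper. But the denominator step leaves a genuine gap that you yourself flag: by integrating over $B_\eta=\{x:\gamma_F(x)>\gamma_F^U-\eta\}$ you only obtain a lower bound of order $t^{-1/(\gamma_F^U-\eta)}$, so the ratio is controlled by $t^{-1/\gamma_F^A(t)+1/(\gamma_F^U-\eta)}$ rather than by $t^{-1/\gamma_F^A(t)+1/\gamma_F^U}$, which is the quantity the lemma's hypothesis actually gives you. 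Since the lemma is applied in the proof of Theorem \ref{THM:nlast} with $A(t)=A_k=\{x:|\gamma_F(x)-\gamma^U|>g(k)\}$ where $g(k)\to 0$, the gap $\gamma_F^U-\gamma_F^A(t)$ is \emph{not} bounded away from zero, so the ``fix a small $\eta$'' case you dispatch is not the case that matters, and the $\eta(t)\downarrow 0$ trade-off you identify as ``the main obstacle'' is precisely the part that needs to be, and is not, carried out.

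The paper closes this by restricting the denominator not to an $\eta$-neighbourhood but to the exact argmax set $B=\{x\in\mathcal X:\gamma_F(x)\geq\gamma_F(y)\ \forall y\in\mathcal X\}$, where $\gamma_F(x)=\gamma_F^U$ identically. Then $t^{-1/\gamma_F(x)}=t^{-1/\gamma_F^U}$ on $B$ with no slack, the uniform two-sided bound on $L_{Y|X=x}$ gives $\int_B L_{Y|X=x}(t)\,F_X(\dd x)\geq c>0$, and the resulting bound $\pr(X\in A(t)\mid Y>t)\leq K_1\,t^{-1/\gamma_F^A(t)+1/\gamma_F^U}$ matches the hypothesis exactly. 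The price is the implicit assumption $\pr(X\in B)>0$, which the paper states in the proof of Lemma \ref{FX-dist} and tacitly carries over here. Your $B_\eta$ device is in principle more robust (it does not require $B$ to carry positive mass), but it trades that robustness for a dependence of $m_\eta$ on $\eta$ that you would then need to quantify against the rate hypothesis, and the hypotheses of the lemma as stated do not give you that control. To repair the proof, replace $B_\eta$ by $B$ and invoke $\pr(X\in B)>0$.
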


\begin{proof}
  Note that for all $\epsilon>0$, we have for  $t>T$ that
  
\begin{align*}
    \exp\left(\int_1^t \delta(x,u)/u\dd u\right)&=\exp\left(\int_1^T \delta(x,u)/u\dd u\right)\exp\left(\int_T^t \delta(x,u)/u\dd u\right)\\
    &\leq K \exp\left(\int_T^t 1/u^{1+\epsilon}\dd u\right) \\
    &\leq K\exp\left(\frac{1}{\epsilon T^{\epsilon}}\right).
\end{align*}
for some $K>0$ since $\delta$ is bounded.
Hence $ \exp\left(\int_1^t \delta(x,u)/u\dd u\right)$ is bounded from above uniformly in $x$ and independently of $t$. 
 Similarly, it can be shown that the expression is bounded from below. By using that $c(x)$ is bounded, we can concluded that $L_{Y|X=x}(t)$ is bounded. So we have, using the notation of Lemma \ref{FX-dist}, that
\begin{align*}
    \pr(X\in A(t)| Y>t)&\leq \frac{\pr(X\in A(t), Y>t)}{\pr(Y>t,X\in B)}\\
    & =\frac{\int_{A(t)} \pr(Y>t|X=x)F_X(\dd x)}{\int_{B} \pr(Y>t|X=x)F_X(\dd x)} \\
    &\leq \frac{\int_A L_{Y|X=x}(t) F_X(\dd x) t^{-1/\gamma_F^A(t)}}{\int_B L_{Y|X=x}(t)F_X(\dd x) t^{-1\gamma_F^U}} \\
    &\leq K_1\frac{t^{-1/\gamma_F^A(t)}}{ t^{-1\gamma_F^U}}
\end{align*}
 where $K_1$ is some positive constant. This goes to 0 if $t^{-1/\gamma_F^A+1/\gamma_F^U}\rightarrow 0$.
\end{proof}

\begin{lemma}
    Let $n(k)=a^{k^b}$ for some $a>1$ and $b>1/2$. Then for any $\epsilon_1>0$
    \begin{align}
    &\sqrt{k}g(k)\rightarrow 0 \label{c1}\\
    &\sqrt{k}\left(\frac{n}{k}\right)^{(\gamma_H+\epsilon_1)(-g(k))}\rightarrow 0 \label{c2}\\
    &\sqrt{k}\left(\frac{n}{k}\right)^{(\gamma_H+\epsilon_1)(-1/(\gamma^U-g(k))+1/\gamma^U} \rightarrow 0. \label{c3}
\end{align}
    \label{lemma:g}
\end{lemma}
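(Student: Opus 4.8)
All three displays are deterministic asymptotic statements, and the plan is to reduce each of them to an elementary estimate by passing to logarithms. The one structural input to record first is that, with $n=n(k)=a^{k^{b}}$,
\[
\log\!\left(\frac{n}{k}\right)=k^{b}\log a-\log k=k^{b}\log a\,(1+o(1)),
\]
so that $\log(n/k)$ grows like $k^{b}$ up to the constant $\log a$; in particular $\log(n/k)\ge\tfrac12 k^{b}\log a$ for all large $k$. Everything else is bookkeeping against the sequence $g$: to make the three limits hold $g$ must decay strictly faster than $k^{-1/2}$ (this is \eqref{c1}) yet strictly slower than $\log k/k^{b}$ (this is what \eqref{c2} and \eqref{c3} will demand), and the hypothesis $b>1/2$ is precisely what makes this window non-empty, since $\log k/k^{b}=o(k^{-1/2})$ exactly when $b>1/2$. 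So I would first fix (or recall the definition of) $g$ inside this window and then verify the three displays.

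For \eqref{c1}: this is immediate from the chosen decay rate of $g$ — if $g$ is specified through $\log(n/k)$, substitute the estimate above, otherwise $\sqrt{k}\,g(k)\to0$ is the very statement that $g=o(k^{-1/2})$. No use of $\gamma_H$ or $\gamma^{U}$ is needed here.

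For \eqref{c2}: take logarithms, so it suffices to show
\[
\tfrac12\log k-(\gamma_{H}+\epsilon_{1})\,g(k)\log\!\left(\frac{n}{k}\right)\longrightarrow-\infty .
\]
Using $\log(n/k)=k^{b}\log a\,(1+o(1))$, the left-hand side equals $\tfrac12\log k-(\gamma_{H}+\epsilon_{1})(\log a)\,k^{b}g(k)\,(1+o(1))$, and since $g$ sits in the admissible window we have $k^{b}g(k)/\log k\to\infty$, so the negative term dominates and the expression tends to $-\infty$. For \eqref{c3} the exponent simplifies, since for $g(k)<\gamma^{U}$,
\[
-\frac{1}{\gamma^{U}-g(k)}+\frac{1}{\gamma^{U}}=-\frac{g(k)}{\gamma^{U}\left(\gamma^{U}-g(k)\right)}=-\bigl(1+o(1)\bigr)\frac{g(k)}{(\gamma^{U})^{2}},
\]
because $g(k)\to0$. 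Hence \eqref{c3} has exactly the same shape as \eqref{c2}, with the positive constant $\gamma_{H}+\epsilon_{1}$ replaced by $(\gamma_{H}+\epsilon_{1})/(\gamma^{U})^{2}$ up to a $(1+o(1))$ factor, and the identical log-linearization argument applies verbatim.

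The only genuinely delicate point — and the step I would flag as the crux — is not any single estimate but the simultaneous compatibility of \eqref{c1} with \eqref{c2}--\eqref{c3}: the first pushes $g$ to decay fast, the last two push it to decay slowly, and reconciling them forces $k^{1/2-b}\log k\to0$, i.e. $b>1/2$, which is exactly the hypothesis. Once a $g$ in this window has been fixed, each of the three displays follows from the computations above with no further subtlety.
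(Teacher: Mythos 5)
Your argument is correct and is essentially the paper's proof, presented a bit more cleanly: both pass to logarithms, use $\log(n/k)\sim k^b\log a$, and observe that $g$ must sit in the window $\log k/k^b\ll g(k)\ll k^{-1/2}$, which is non-empty exactly because $b>1/2$. The only point of substance where you diverge is the third display: the paper compares rates by showing $\tfrac{1}{\gamma^U}-\tfrac{1}{\gamma^U-k^{-c}}<-k^{-d}$ eventually for $d>c$, whereas your exact identity $-\tfrac{1}{\gamma^U-g}+\tfrac{1}{\gamma^U}=-\tfrac{g}{\gamma^U(\gamma^U-g)}$ reduces \eqref{c3} to \eqref{c2} up to a $(1+o(1))$ constant, which is tidier and avoids the auxiliary $\epsilon_3>\epsilon_2$ bookkeeping. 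One small completeness gap: the lemma statement never defines $g$, so the claim is really an existence statement, and you should name a concrete representative in the window rather than just saying "fix $g$ inside it"---the paper takes $g(k)=k^{-(1/2+\epsilon_2)}$ with $0<\epsilon_2<b-1/2$, which is the natural choice and makes \eqref{c1} immediate.
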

\begin{proof}
\eqref{c1} holds if we let $g(k)=\frac{1}{k^{1/2+\epsilon_2}}$ for some $\epsilon_2>0$. Assume for the moment that $a$ and $b$ are arbitrary constant, and let $n(k)=a^{k^b}$. We examine the following limits:
\begin{align*}
    &\sqrt{k}\left(\frac{a^{k^b}}{k}\right)^{(\gamma_H+\epsilon_1)(-g(k))}\rightarrow 0 \\
    &\sqrt{k}\left(\frac{a^{k^b}}{k}\right)^{(\gamma_H+\epsilon_1)(-1/(\gamma^U-g(k))+1/\gamma^U)} \rightarrow 0,
\end{align*}
which hold whenever
\begin{align}
    &\sqrt{k}a^{-g(k)k^b}\rightarrow 0 \label{c4} \\
    &\sqrt{k}\left(a^{k^b}\right)^{(-1/(\gamma^U-g(k))+1/\gamma^U)} \rightarrow 0. \label{c5}
\end{align}
For $\epsilon_3>\epsilon_2$ we have that 
\begin{align*}
    -g(k)=\frac{-1}{k^{1/2+\epsilon_2}}<\frac{-1}{k^{1/2+\epsilon_3}}.
\end{align*}
Furthermore, note that for arbitrary $c,d>0$, we have
\begin{align*}
    \frac{\frac{1}{\gamma^U}-\frac{1}{\gamma^U-1/k^{c}}}{-1/k^d}&=\frac{k^d}{\gamma^U(\gamma^Uk^c-1)},
\end{align*}
which converges to infinity for $d>c$. In other words, $\frac{1}{\gamma^U}-\frac{1}{\gamma^U-1/k^{c}}$ goes slower to $0$ than $-1/k^d$. Since both of these quantities are negative, we must have for $k$ large enough that
\begin{align*}
    \frac{1}{\gamma^U}-\frac{1}{\gamma^U-1/k^{c}}<-1/k^d
\end{align*}
for $d>c$. So
\begin{align*}
     \frac{1}{\gamma^U}-\frac{1}{\gamma^U-1/k^{0.5+\epsilon_2}}<-1/k^{1/2+\epsilon_3}.
\end{align*}
Consequently \eqref{c4} and \eqref{c5} hold if
\begin{align*}
    \sqrt{k}a^{k^b(-\frac{1}{k^{0.5+\epsilon_3}})}\rightarrow 0.
\end{align*}
This happens for $b>1/2+\epsilon_3$ and any $a>1$. Since all the $\epsilon$'s may be chosen arbitrarily small, the result holds. 
\end{proof}

\begin{proof}[Proof of Theorem \ref{THM:nlast}]
Note that given $X^t$ then ${U_{F_{Y|X=X^t}}(s\xi)}/{U_{F_{Y|X=X^t}}(s)}$ has cdf $F^t_{Y|X=X^t}$ when $s=U_{F_{Y|X=X^t}}^{\leftarrow}(t)$ and where $\xi$ is a standard Pareto random variable, independent of $X^t$. Hence we have
\begin{align*}
    \E \left[ \varphi\left(X^t, \frac{U_{F_{Y|X=X^t}}(s\xi)}{U_{F_{Y|X=X^t}}(s)}\right)\bigg| X^t\right]=\E \left[ \varphi\left(X^t, Y^t \right)\bigg| X^t\right],
\end{align*}
which implies that 
\begin{align*}
    \int\varphi \dd F^t=&\E\left[\E \left[ \varphi\left(X^t, \frac{U_{F_{Y|X=X^t}}(s\xi)}{U_{F_{Y|X=X^t}}(s)}\right)\bigg| X^t\right]\right]\\
    =&\E\left[ \varphi\left(X^t, \frac{U_{F_{Y|X=X^t}}(s\xi)}{U_{F_{Y|X=X^t}}(s)}\right)\right].
\end{align*}
Consequently
\begin{align*}
    \sqrt{k}\int \varphi d(F^{Z_{n-k,n}}-F^{U_H(n/k)})\stackrel{\pr}{\to}0
\end{align*}
may be written as 
\begin{align*}
    \sqrt{k}\Bigg(\E\Bigg[& \varphi\left(X^{Z_{n-k,n}}, \frac{U_{F_{Y|X=X^{Z_{n-k,n}}}}(y\xi)}{U_{F_{Y|X=X^{Z_{n-k,n}}}}(y)}\right)\Bigg| Z_{n-k,n}\Bigg]\\
    -&\E \left[ \varphi\left(X^{U_H(n/k)}, \frac{U_{F_{Y|X=X^{U_H(n/k)}}}(z\xi)}{U_{F_{Y|X=X^{U_H(n/k)}}}(z)}\right)\right]\Bigg)\stackrel{\pr}{\to}0
\end{align*}
where $y=U_{F_{Y|X=X^{Z_{n-k,n}}}}^{\leftarrow}(Z_{n-k,n})$ and $z=U_{F_{Y|X=X^{U_H(n/k)}}}^{\leftarrow}(U_H(n/k))$. For notation's sake let $a=Z_{n-k,n}$ and $b=U_H(n/k)$. With the use of the second part of Condition \ref{cond:assyp} and the mean value theorem, note that
\begin{align*}
    &\Bigg|\E \left[ \varphi\left(X^a, \frac{U_{F_{Y|X=X^a}}(y\xi)}{U_{F_{Y|X=X^a}}(y)}\right)- \varphi\left(X^b, \frac{U_{F_{Y|X=X^b}}(z\xi)}{U_{F_{Y|X=X^b}}(z)}\right)\Bigg| Z_{n-k,n}\right]\Bigg| \\
    &\leq M\Bigg|\E \left[ \hat{\varphi}\left(\frac{U_{F_{Y|X=X^a}}(y\xi)}{U_{F_{Y|X=X^a}}(y)}\right)- \hat{\varphi}\left(\frac{U_{F_{Y|X=X^b}}(z\xi)}{U_{F_{Y|X=X^b}}(z)}\right)\bigg| Z_{n-k,n}\right]\Bigg| \\
    &\leq M\Bigg|\E \left[ \hat{\varphi}'(y^*)\left(\frac{U_{F_{Y|X=X^a}}(y\xi)}{U_{F_{Y|X=X^a}}(y)}-\frac{U_{F_{Y|X=X^b}}(z\xi)}{U_{F_{Y|X=X^b}}(z)}\right)\bigg| Z_{n-k,n}\right]\Bigg|
\end{align*}
where $y^*$ is between $\frac{U_{F_{Y|X=X^a}}(y\xi)}{U_{F_{Y|X=X^a}}(y)}$ and $\frac{U_{F_{Y|X=X^b}}(z\xi)}{U_{F_{Y|X=X^b}}(z)}$. First note that 
 large enough $l$ and $\epsilon>0$, we have
\begin{align*}
    U_{F_{Y|X=x}}^{\leftarrow}(l)    &=\frac{1}{1-F_{Y|X=x}(l)}\\
    &=\frac{1}{L_{Y|X=x}(l)l^{-1/\gamma_F(x)}}\\
    &\geq l^{1/(\gamma_F^U+\epsilon)}
\end{align*}
So $y$ and $z$ goes uniformly to $\infty$. Hence we can conclude by Lemma \ref{segers} that 
\begin{align*}
    \hat{\varphi}'(y^*)\leq  C_1 \max\{\hat{\varphi}'(\xi^{\gamma_F^L-\epsilon}),\hat{\varphi}'(\xi^{\gamma_F^U+\epsilon})\}
\end{align*} with arbitrary high probability.
For notation's sake define
\begin{align*}
    &u(s,\xi,x):=U_{F_{Y|X=x}}(s\xi)/U_{F_{Y|X=x}}(s) \\
   &A(s,t,x^h):=\left(\int_s^{\infty} |\eta_{x^h}'(l)|\dd l+\int_t^{\infty} |\eta_{x^h}'(l)|\dd l\right)\\
   &B(t,x^h,x^z):=\int_t^{\infty} |\eta_{x^h}'(l)|+|\eta_{x^z}'(l)||\dd l.
\end{align*}
By Lemma \ref{segers}, we have
\begin{align*}
    &|u(y,\xi,x^h)-u(z,\xi,x^z)|\\
    &\leq2A(y,z,x^h) |\log(y/z)|\xi^{\gamma^U+\epsilon}+\xi^{\gamma^U+\epsilon}\log(\xi)\left(B(y,x^h,x^z)+|\eta_{x^z}-\eta_{x^h}|\right)
\end{align*}
for every $\epsilon>0$ and large $y$, $z$. Thus we get
\begin{align*}
    M&\sqrt{k}\E \Bigg[ \hat{\varphi}'(y^*)\Bigg(\frac{U_{F_{Y|X=X^a}}(y\xi)}{U_{F_{Y|X=X^a}}(y)}-\frac{U_{F_{Y|X=X^b}}(z\xi)}{U_{F_{Y|X=X^b}}(z)}\Bigg)\Bigg|Z_{n-k,n}\Bigg] \\
    &\leq \sqrt{k}MC_1\E\Big[\max\{\hat{\varphi}'(\xi^{\gamma_F^L-\epsilon}),\hat{\varphi}'(\xi^{\gamma_F^U+\epsilon})\}\xi^{\gamma^U+\epsilon}\Big]\\
    &\quad \times\Bigg\{\E\Bigg[2A\Bigg(U_{F_{Y|X=X^a}}^{\leftarrow}(Z_{n-k,n}),U_{F_{Y|X=X^b}}^{\leftarrow}(U_H(n/k)),X^a\Bigg)\\
    &\quad\quad\quad\times\Bigg|\log\Bigg(\frac{U_{F_{Y|X=X^a}}^{\leftarrow}(Z_{n-k,n})}{U_{F_{Y|X=X^b}}^{\leftarrow}(U_H(n/k))}\Bigg)\Bigg|\Bigg|Z_{n-k,n}\Bigg]\\
    &\quad\quad\quad+\E\left[\log(\xi)B\left(U_{F_{Y|X=X^b}}^{\leftarrow}(U_H(n/k)),X^a,X^b\right)\bigg| Z_{n-k,n}\right]\\
    &\quad\quad\quad+\E\big[\log(\xi)\big|\gamma(X^a)-\gamma(X^b)\big|\big|Z_{n-k,n}\big]\Bigg\}.
\end{align*}
with arbitrary high probability. By Condition \ref{cond:assyp} we have
\begin{align*}
    \E\Big[\max\{\hat{\varphi}'(\xi^{\gamma_F^L-\epsilon}),\hat{\varphi}'(\xi^{\gamma_F^U+\epsilon})\}\xi^{\gamma^U+\epsilon}\log(\xi)\Big]<\infty.
\end{align*}
So the proof is complete if 
\begin{align}
&2\sqrt{k}\E\Bigg[A\left(U_{F_{Y|X=X^a}}^{\leftarrow}(Z_{n-k,n}),U_{F_{Y|X=X^b}}^{\leftarrow}(U_H(n/k)),X^a\right)\label{pre}\\
&\quad\quad\quad\times\log\left(\frac{U_{F_{Y|X=X^a}}^{\leftarrow}(Z_{n-k,n})}{U_{F_{Y|X=X^b}}^{\leftarrow}(U_H(n/k)))}\right)\Bigg|Z_{n-k,n}\Bigg] \stackrel{\pr}{\to}0 \label{eq1}\\
&\sqrt{k}\E\big[\E\big[|\gamma(X^a)-\gamma(X^b)||Z_{n-k,n}\big]\stackrel{\pr}{\to}0 \label{eq2} \\
    &\sqrt{k}\E\left[B\left(U_{F_{Y|X=X^b}}^{\leftarrow}(U_H(n/k),X^a,X^b)\right)\bigg| Z_{n-k,n}\right] \stackrel{\pr}{\to} 0. \label{eq3}
\end{align}
Note that \eqref{eq3} follows from Condition \ref{cond:slowly} since 

\begin{align*}
    &\sqrt{k}\E\left[B\left(U_{F_{Y|X=X^b}}^{\leftarrow}(U_H(n/k),X^a,X^b)\right)\bigg| Z_{n-k,n}\right] \\
    &\quad\leq \sqrt{k}\sup_{x^h,x^z\in \mathcal{X}}B\left(U_{F_{Y|X=x^z}}^{\leftarrow}(U_H(n/k),x^h,x^z)\right)
\end{align*}
and that $\sqrt{k}/\log(n/k)\to 0$ by assumption. 
Now consider \eqref{eq2}. We have  
\begin{align*}
    &\sqrt{k}\E\left[|\gamma(X^a)-\gamma(X^b)|\bigg| Z_{n-k,n}\right]\\
    &\leq \sqrt{k}\E[|\gamma^U-\gamma(X^a)|]+\sqrt{k}\E[|\gamma^U-\gamma(X^b)|| Z_{n-k,n}].
\end{align*}
The first term satisfies
\begin{align*}
     &\sqrt{k}\E[\gamma^U-\gamma(X^a)]\\
     &=\sqrt{k}\E[(\gamma^U-\gamma(X^a))I(X^a \in A_{k})]+\sqrt{k}\E[(\gamma^U-\gamma(X^a))I(X^a \in A_{k}^c)]
\end{align*}
where $A_k:=\{x: |\gamma(x)-\gamma^U|>g(k)\}$. So we have
\begin{align*}
    \sqrt{k}\E[\gamma^U-\gamma(X^a)]\leq \sqrt{k}(\gamma^U-\gamma^L)P(X^a\in A_k)+ \sqrt{k}g(k).
\end{align*}
 We now consider $\sqrt{k}P(X^{U_H(n/k)}\in A_k)$, which converges to $0$ by Lemma \ref{lemma:prop}  if 
\begin{align*}
\sqrt{k}\frac{U_H(n/k)^{-1/(\gamma^U-g(k))}}{U_H(n/k)^{-1/\gamma^U}}
\end{align*}
converges to $0$. By Potter's    bounds we have
\begin{align*}
\sqrt{k}\frac{U_H(n/k)^{-1/(\gamma^U-g(k))}}{U_H(n/k)^{-1/\gamma^U}}&\leq\sqrt{k}M_1\left(\frac{n}{k}\right)^{(\gamma_H+\epsilon_1)(-1/(\gamma^U-g(k))+1/\gamma^U)}
\end{align*}
for some $\epsilon_1>0$, $M_1>0$ and $n/k$ large enough. So we require
\begin{align}
    &\sqrt{k}g(k)\rightarrow 0 \label{eq11}\\
    &\sqrt{k}\left(\frac{n}{k}\right)^{(\gamma_H+\epsilon_1)(-1/(\gamma^U-g(k))+1/\gamma^U)} \rightarrow 0
\end{align}
 and this is satisfied by Lemma \ref{lemma:g}. Now, we look at the second term $\sqrt{k}\E[\gamma^U-\gamma(X^b)|Z_{n-k,n}]$. Similarly, we get
 \begin{align*}
     \sqrt{k}\E[\gamma^U-\gamma(X^b)|Z_{n-k,n}]\leq \sqrt{k}(\gamma^U-\gamma^L)\E[P(X^a\in A_k)|Z_{n-k,n}]+ \sqrt{k}g(k).
 \end{align*}
 So, we need to check the first term. By using $U_H(\xi_{n-k,n})\stackrel{d}{=}Z_{n-k,n}$ for a sequence of independent standard Pareto variables $(\xi_i)$, we have with arbitrary high probability that have
 \begin{align*}
     \sqrt{k}\E[P(X^a\in A_k)|Z_{n-k,n}]&\leq \sqrt{k}M_2\left(\frac{n}{k}\right)^{(\gamma_H+\epsilon_1)(-1/(\gamma^U-g(k))+1/\gamma^U)} \\
     & \quad \times\left(\xi_{n-k,n}\left(\frac{n}{k}\right)^{-1}\right)^{(\gamma_H+\epsilon_1)(-1/(\gamma^U-g(k))+1/\gamma^U)}.
 \end{align*}
Since by Smirnov's lemma \cite{smirnov} $\xi_{n-k,n}\left(\frac{n}{k}\right)^{-1}\stackrel{\pr}{\to} 1$, we can conclude that \eqref{eq2} holds.

We now turn attention to \eqref{pre} and \eqref{eq1}. By same argumentation from \eqref{eq3}, we have that 

\begin{align}
    &2\sqrt{k}\E\Bigg[A\left(U_{F_{Y|X=X^a}}^{\leftarrow}(Z_{n-k,n}),U_{F_{Y|X=X^b}}^{\leftarrow}(U_H(n/k))\right)\nonumber\\
    &\quad\quad\quad\times\log\left(\frac{U_{F_{Y|X=X^a}}^{\leftarrow}(Z_{n-k,n})}{U_{F_{Y|X=X^b}}^{\leftarrow}(U_H(n/k)))}\right)\Bigg|Z_{n-k,n}\Bigg]\nonumber \\
    \leq &2\sqrt{k}\sup_{x^h,x^z\in \mathcal{X}}A\left(U_{F_{Y|X=x^h}}^{\leftarrow}(Z_{n-k,n}),U_{F_{Y|X=x^z}}^{\leftarrow}(U_H(n/k)),x^h\right)\label{eqAA}\\
    &\quad\quad\quad\times\E\Bigg[\log\left(\frac{U_{F_{Y|X=X^a}}^{\leftarrow}(Z_{n-k,n})}{U_{F_{Y|X=X^b}}^{\leftarrow}(U_H(n/k)))}\right)\Bigg|Z_{n-k,n}\Bigg]\label{eqUZ}.
\end{align}
\eqref{eqAA} goes to 0 in probability by condition \ref{cond:assyp}, so we just need to bound \eqref{eqUZ}. Note that
\begin{align}
   & \log\left(\frac{U_{F_{Y|X=X^a}}^{\leftarrow}(Z_{n-k,n})}{U_{F_{Y|X=X^b}}^{\leftarrow}(U_H(n/k)))}\right)\\
    &=\log\left(\frac{Z_{n-k,n}^{1/\gamma(X^a)}L_{X^b}(U_H(n/k))}{U_H(n/k)^{1/\gamma(X^a)}L_{X^a}(Z_{n-k,n})}\frac{U_H(n/k)^{1/\gamma(X^a)}}{U_H(n/k)^{1/\gamma(X^b)}}\right) \nonumber\\
    &=\log\left(\frac{Z_{n-k,n}^{1/\gamma(X^a)}L_{X^b}(U_H(n/k))}{U_H(n/k)^{1/\gamma(X^a)}L_{X^a}(Z_{n-k,n})}\frac{(L_H(n/k)(n/k)^{\gamma_H})^{1/\gamma(X^a)}}{(L_H(n/k)(n/k)^{\gamma_H})^{1/\gamma(X^b)}}\right)\nonumber \\
    &=\log\left(\frac{Z_{n-k,n}^{1/\gamma(X^a)}}{U_H(n/k)^{1/\gamma(X^a)}}\frac{L_{X^a}(U_H(n/k))}{L_{X^a}(Z_{n-k,n})}\right)\label{eq111}\\
    &\quad+\log(n/k)\left(\frac{1}{\gamma(X^a)}-\frac{1}{\gamma(X^b)}\right)\gamma_H+ \log\left(\frac{L_{X^b}(U_H(n/k))}{L_{X^a}(U_H(n/k))}\right)\label{eq222}\\
    &\quad+\log\left(\frac{L_H(n/k)^{1/\gamma(X^a)}}{L_H(n/k)^{1/\gamma(X^b)}}\right)\label{eq333},
\end{align}
where we used $U_{F_{Y|X=x}}^{\leftarrow}(s)=1/(1-F_{Y|X=x}(s))=1/(L_{F_{Y|X=x}}(s)s^{-\gamma_F(x)})$ and $U_{H}(s)=L_{H}(s)s^{\gamma_H}$, where $L_{H}$ is some slowly varying function. We now look at the four terms separately. By using $U_H(\xi_{n-k,n})\stackrel{d}{=}Z_{n-k,n}$ for a sequence of independent standard Pareto variables, $(\xi_i)$, we may re-express \eqref{eq111} as 
\begin{align*}
    &\frac{U_{F_{Y|X=X^a}}^{\leftarrow}(U_H(\xi_{n-k,n})))}{U_{F_{Y|X=X^a}}^{\leftarrow}(U_H(n/k))}\\
    &=\frac{L_{Y|X=X^a}(L_H(n/k)(n/k)^{\gamma_H})(L_H(n/k)(n/k)^{\gamma_H})^{-1/\gamma_F(X^a)}}{L_{Y|X=X^a}(L_H(\xi_{n-k,n})(\xi_{n-k,n})^{\gamma_H})(L_H(\xi_{n-k,n})(\xi_{n-k,n})^{\gamma_H})^{-1/\gamma_F(X^a)}}.
\end{align*}

Since $\xi_{n-k,n}\stackrel{\pr}{\to}\infty$, we use Uniform Potter's bounds to obtain, that for high enough $n/k$ then for every $\epsilon_a,\epsilon_c>0$ we have with arbitrary high probability that
\begin{align*}
    &\frac{L_{X=X^a}^F(L^{U^H}(n/k)(n/k)^{\gamma_H})(L^{U^H}(n/k)(n/k)^{\gamma_H})^{-1/\gamma_F(X^a)}}{L_{X=X^a}^F(L^{U^H}(\xi_{n-k,n})(\xi_{n-k,n})^{\gamma_H})(L^{U^H}(\xi_{n-k,n})(\xi_{n-k,n})^{\gamma_H})^{-1/\gamma_F(X^a)}}\\
    &\leq (1+\epsilon_c) \left(\frac{n/k}{\xi_{n-k,n}}\right)^{-\gamma_H/\gamma_F(X^a)+\epsilon_a} \\
    &\leq \sup_{x\in \mathcal{X}} (1+\epsilon_c) \left(\frac{n/k}{\xi_{n-k,n}}\right)^{-\gamma_H/\gamma_F(x)+\epsilon_a}.
\end{align*}

By Smirnov's Lemma \cite{smirnov}, we have $\frac{(n/k)}{\xi_{n-k,n}}\stackrel{\pr}{\to} 1$ and since $\gamma_F(x)\in [\gamma_F^L,\gamma_F^U]$, we have 

\begin{align*}
\E\left[\frac{U_{F_{Y|X=X^a}}^{\leftarrow}(Z_{n-k,n})}{U_{F_{Y|X=X^a}}^{\leftarrow}(U_H(n/k))}\Bigg| Z_{n-k,n}\right]
\end{align*}
 is eventually bounded in probability.

Now we look to the first term in \eqref{eq222}. If we include the multiplication of $A$ from \eqref{eqAA}, we need to show that
\begin{align}
&\sqrt{k}\sup_{x^h,x^z\in \mathcal{X}}A\left(U_{F_{Y|X=X^h}}^{\leftarrow}(Z_{n-k,n}),U_{F_{Y|X=X^z}}^{\leftarrow}(U_H(n/k)),x^h\right)\nonumber\\
    &\quad \times\log(n/k)\E\left[\left|\frac{1}{\gamma(X^a)}-\frac{1}{\gamma(X^b)}\right|\Bigg| Z_{n-k,n}\right]\stackrel{\pr}{\to} 0
    \label{eq:thm123}
\end{align}
By Condition \ref{cond:slowly}, the result follows if we can bound
\begin{align*}
    \sqrt{k}\E\left[\left|\frac{1}{\gamma(X^a)}-\frac{1}{\gamma(X^b)}\right|\Bigg| Z_{n-k,n}\right]
\end{align*}
in probability.
Using the same procedure as for \eqref{eq2}, we now need to bound two following equations
\begin{align*}
   &\sqrt{k}g(k)\\
    &\sqrt{k}(n/k)^{(\gamma_H+\epsilon_2)(-g(k))}
\end{align*}
for some $\epsilon_2>0$. These equations are satisfied by Lemma \ref{lemma:g}.
The second term in \eqref{eq222} follows from the convergence of the first term in \eqref{eq222} and Condition \ref{cond:slowly}. Finally \eqref{eq333} follows since $L_H(n/k)<n/k$ for large enough $n/k$ and hence holds if \eqref{eq:thm123} holds.
\end{proof}

\subsection{Proof of Theorem \ref{THM:last} (third CLT)}
\begin{proof}[Proof of Theorem \ref{THM:last}]
We may rewrite
\begin{align*}
    &\sqrt{k}\int \phi d(F^{Z_{n-k,n}}-F^{\circ})\\
        &=\sqrt{k}\E\Bigg[\varphi\Bigg(X^{Z_{n-k,n}},\frac{U_{F_{Y|X=X^{Z_{n-k,n}}}}(y\xi)}{U_{F_{Y|X=X^{Z_{n-k,n}}}}(y)}\Bigg) -\varphi(X^{\circ},\xi^{\gamma^U})\Bigg| Z_{n-k,n}\Bigg]\\
    &=\sqrt{k}\E \Bigg[\varphi\Bigg(X^{Z_{n-k,n}},\frac{U_{F_{Y|X=X^{Z_{n-k,n}}}}(y\xi)}{U_{F_{Y|X=X^{Z_{n-k,n}}}}(y)}\Bigg)\\
   &\quad\quad\quad\quad -\varphi\Bigg(X^{\circ},\frac{U_{F_{Y|X=X^{\circ}}}(y\xi)}{U_{F_{Y|X=X^{\circ}}}(y)}\Bigg)\Bigg|Z_{n-k,n}\Bigg]\\
    &\quad +\sqrt{k}\E \Bigg[\varphi\Bigg(X^{\circ},\frac{U_{F_{Y|X=X^{\circ}}}(y\xi)}{U_{F_{Y|X=X^{\circ}}}(y)}\Bigg) -\varphi(X^{\circ},\xi^{\gamma^U})\Bigg| Z_{n-k,n}\Bigg]\nonumber
\end{align*}
with $y=U_{F_{Y|X=X^{Z_{n-k,n}}}}^{\leftarrow}(Z_{n-k,n})$. 
By using lemma \ref{segers}, noticing that $\gamma_F(X^{\circ})=\gamma_F^U$ and using similar argumentation as in Theorem \ref{THM:nlast}, the first term converge to 0 in probability. Now we look at the latter term. By the mean value theorem, we have

\begin{align*}
    &\sqrt{k}\E\Bigg[\varphi\Bigg(X^{\circ},\frac{U_{F_{Y|X=X^{\circ}}}(y\xi)}{U_{F_{Y|X=X^{\circ}}}(y)}\Bigg)-\varphi(X^{\circ},\xi^{\gamma^U})\Bigg| Z_{n-k,n}\Bigg] \\
    & = \sqrt{k}\E\left[\frac{\partial}{\partial y}\varphi(X^{\circ},y^*(X^{\circ}))\left(\frac{U_{F_{Y|X=X^{\circ}}}(y\xi)}{U_{F_{Y|X=X^{\circ}}}(y)}-\xi^{\gamma^U}\right)\Bigg| Z_{n-k,n}\right] \\
    & \leq \sqrt{k}\E\left[\hat{\varphi}'(y^*(X^{\circ}))\left(\frac{U_{F_{Y|X=X^{\circ}}}(y\xi)}{U_{F_{Y|X=X^{\circ}}}(y)}-\xi^{\gamma^U}\right)\Bigg| Z_{n-k,n}\right]
\end{align*}
with $y^*(X^{\circ})$ between $\frac{U_{F_{Y|X=X^{\circ}}}(y\xi)}{U_{F_{Y|X=X^{\circ}}}(y)}$ and $\xi^{\gamma^U}$.
By uniform Potter's bounds
\begin{align*}
    \frac{1}{2}t^{\gamma_F^L-\epsilon}\leq \frac{U_{F_{Y|X=x}}(tl)}{U_{F_{Y|X=x}}(l)}\leq 2t^{\gamma_F^U+\epsilon}
\end{align*}
for large $l$ uniformly for all $x$. As argued in Theorem \ref{THM:nlast} $y$ goes uniformly to infinity, thus, we have
\begin{align}
    \hat{\varphi}'(y^*(X^{\circ}))\leq C_1 \max\{\hat{\varphi}'(0.5\xi^{\gamma_F^L-\epsilon}),\hat{\varphi}'(2\xi^{\gamma_F^U+\epsilon})\} \label{eq:hat}
\end{align}
with arbitrary large probability.

Now consider the term $\E\Big[\Big(\frac{U_{F_{Y|X=X^{\circ}}}(y\xi)}{U_{F_{Y|X=X^{\circ}}}(y)}-\xi^{\gamma^U_F}\Big)\Big|Z_{n-k,n}\Big]$. By Condition \ref{cond:second-order}, we can use Lemma \ref{lemma:segers2}  to get
\begin{align*}
    &\E\left[\left(\frac{U_{F_{Y|X=X^{\circ}}}(y\xi)}{U_{F_{Y|X=X^{\circ}}}(y)}-\xi^{\gamma^U_F}\right)\Bigg|Z_{n-k,n}\right]\leq \E\Big[K\xi^{\gamma_F^U}a_{X=X^{\circ}}(y)|Z_{n-k,n}\Big]
\end{align*}
for large $y$. So, collecting terms, we get that 
\begin{align}
    &\sqrt{k}M \left| \E\left[\varphi'(y^*)\left(\frac{U_{F_{Y|X=X^{\circ}}}(y\xi)}{U_{F_{Y|X=X^{\circ}}}(y)}-\xi^{\gamma^U_F}\right)\Bigg|Z_{n-k,n}\right]\right|\nonumber \\
    &\quad \leq \E\Big[\sqrt{k}Ka_{X=X^{\circ}}(U_{F_{Y|X=X^{\circ}}}^{\leftarrow}(Z_{n-k,n}))\label{eq:ff}\\
    &\quad\quad\quad\times\xi^{\gamma_F^U}\max\{\hat{\varphi}'(0.5\xi^{\gamma_F^L-\epsilon}),\hat{\varphi}'(2\xi^{\gamma_F^U+\epsilon})\}\Big| Z_{n-k,n}\Big] \label{eq:ss}
\end{align}
with arbitrary large probability. \eqref{eq:ss} is finite by Condition \ref{cond:assyp}. Since $a_{X=x}$ satisfies the uniform Potter's bounds property, we can use the bounds to obtain
\begin{align*}
    \frac{a_{X=x}(U_{F_{Y|X=x}}^{\leftarrow}(Z_{n-k,n}))}{a_{X=x}(U_{F_{Y|X=x}}^{\leftarrow}U_H((n/k))}=\frac{a_{X=x}(U_{F_{Y|X=x}}^{\leftarrow}U_H((\xi_{n-k,n}))}{a_{X=x}(U_{F_{Y|X=x}}^{\leftarrow}U_H((n/k))}\stackrel{\pr}{\to} 1 
\end{align*}
uniformly. So for \eqref{eq:ff} we have
\begin{align}
    &\E\Big[\sqrt{k}Ka_{X=X^{\circ}}(U_{F_{Y|X=X^{\circ}}}^{\leftarrow}(Z_{n-k,n}))\Bigg|Z_{n-k,n} \Bigg]\nonumber\\
    &\leq \sqrt{k}\left(\sup_{x\in \mathcal{X}} a_{X=x}(U_{F_{Y|X=x}}^{\leftarrow}(U_H(n/k)))\right)\sup_{x\in \mathcal{X}} \frac{a_{X=x}(U_{F_{Y|X=x}}^{\leftarrow}(Z_{n-k,n}))}{a_{X=x}(U_{F_{Y|X=x}}^{\leftarrow}(U_H(n/k)))}  \label{eq:aa}
\end{align}

This concludes for $\lambda(x)=0$, since $\sqrt{k}a_{X=x}(U_{F_{Y|X=x}}^{\leftarrow}(U_H(n/k)))$ goes uniformly to $\lambda(x)$.

Now for $\lambda(x)>0$, we have that
\begin{align*}
    \frac{\partial}{\partial y}\varphi(x,y^*)\stackrel{a.s.}{\to} \frac{\partial}{\partial y}\varphi(x,\xi^{\gamma_F}), \quad \text{as } y\rightarrow \infty, \quad x\in B,\\\quad \frac{\frac{U_{F_{Y|X=x}}(y\xi)}{U_{F_{Y|X=x}}(y)}-\xi^{\gamma^U}}{a_{X=x}(y)}\stackrel{a.s.}{\to}\xi^{\gamma_F^U}h_{\rho(x)}(\xi), \quad \text{as } y\rightarrow \infty, \quad x\in B. 
\end{align*}
The first convergence follows from the fact that $\frac{\partial}{\partial y}\varphi(x,y)$ is continuous and the second follows from Condition \ref{cond:second-order}. Let

\begin{align*}
   j(l):=\frac{\partial}{\partial y}\varphi(X^{\circ},l^*(X^{\circ}))\left(\frac{\frac{U_{F_{Y|X=x}}(l\xi)}{U_{F_{Y|X=x}}(l)}-\xi^{\gamma^U}}{a_{X=X^{\circ}}(l)}\right)
\end{align*}
where $l^*$ is between $\frac{U_{F_{Y|X=X^{\circ}}}(l\xi)}{U_{F_{Y|X=X^{\circ}}}(l)}$ and $\xi^{\gamma^U}$. By the argumentation up to \eqref{eq:ff} and \eqref{eq:ss}, we have that $j(l)$ is eventually bounded by a random variable with finite expectation. In addition $a_{X=x}(c_n(x))\sqrt{k}$ is eventually uniformly bounded by a constant, where $c_n(x)=(1-F_{Y|X=x}(U_H(n/k)))^{-1}$. Therefore the Dominated Convergence Theorem can be used to obtain
\begin{align*}
\E\left[a_{X=X^{\circ}}(c_n(X^{\circ}))\sqrt{k}j(l)\right]\to \E\left[ \frac{\partial}{\partial y}\varphi(X^{\circ},\xi^{\gamma_F^U})\xi^{\gamma_F ^U}h_{\rho(X^{\circ})}(\xi)\lambda(X^{\circ})\right].
\end{align*}
Now, we have
\begin{align*}
&\sqrt{k}\int \int\phi(x,v) (F^{Z_{n-k,n}}-F^{\circ})(\dd x,\dd v)\\
    &=\E\left[\sqrt{k}a_{X=X^{\circ}}(U_{F_{Y|X=X^{\circ}}}^{\leftarrow}(Z_{n-k,n})))j(U_{F_{Y|X=X^{\circ}}}^{\leftarrow}(Z_{n-k,n}))\Big| Z_{n-k,n}\right] \\
    &=\E\left[\sqrt{k}a_{X=X^{\circ}}(c_n(X^{\circ}))\frac{a(U_{F_{Y|X=X^{\circ}}}^{\leftarrow}(Z_{n-k,n})))}{a_{X=X^{\circ}}(c_n(X^{\circ}))}j(U_{F_{Y|X=X^{\circ}}}^{\leftarrow}(Z_{n-k,n}))\Bigg| Z_{n-k,n}\right] \\
    &= \sup_{x\in\mathcal{X}}\frac{a_{X=x}(U_{F_{Y|X=x}}^{\leftarrow}(Z_{n-k,n}))}{a_{X=x}(c_n(x))}\E\left[\sqrt{k}a_{X=X^{\circ}}(c_n(X^{\circ}))j(U_{F_{Y|X=x}}^{\leftarrow}(Z_{n-k,n}))\Bigg| Z_{n-k,n}\right] \\
    &\xrightarrow{\pr} \int \lambda(x)C(\gamma_F(x),\rho(x)) F^{\circ}_X(\dd x).
\end{align*}
\end{proof}

\newpage
\section{Proof of Lemma \ref{FX-dist} (asymptotic behaviour of the tail distribution of \textit{X})}
\label{sec:proof-X}

\begin{proof}[Proof of Lemma \ref{FX-dist}]

    Assume for simplicity that $\pr(X\in B)>0$, though the proof extends to the general case with the appropriate notation. Let us first assume that there exists a non-empty set $B_1\subset\mathcal{X}\setminus B$ with $\pr(X\in B_1)>0$. Note that for every $x\in B_1$ there exists a closed nonempty set $A\subset \mathcal{X}\setminus B$ with $P(X\in A)>0$. Hence, there exists a $\epsilon>0$ such that $-1/\gamma_F^A+\epsilon<-1/\gamma_F^U-\epsilon$, where $\gamma_F^A=\max_{x\in A} \gamma_F(x)$. Let the Karamata representation for $L_{Y|X=x}$ be    
    
    \begin{align*}
        c(x,y)\exp\left(\int_1^t \frac{\delta(x,u)}{u}\dd u\right).
    \end{align*}

    Note that for all $\epsilon>0$, we have for large enough $s$ and $t$ that
    \begin{align*}
        \exp\left(\int_1^t \frac{\delta(x,u)}{u}\dd u\right)&=\exp\left(\int_1^s \frac{\delta(x,u)}{u}\dd u\right)\exp\left(\int_s^t \frac{\delta(x,u)}{u}\dd u\right) \\
        &\leq \exp\left(\int_1^s \frac{\delta(x,u)}{u}\dd u\right) \exp\left(\int_s^t \frac{\epsilon}{u}\dd u\right) \\
        &= K t^{\epsilon}.
    \end{align*}
    for some $K>0$, which doesn't depend on $x$. In a similar fashion we can find a lower bound. Thus for large enough $t$ we have
    \begin{align*}
        &\pr(X\in A| Y>t)\\
        &=\frac{\pr(X\in A, Y>t)}{\pr(Y>t)}\\
        &\leq \frac{\int_A \pr(Y>t|X=x)F_X(\dd x)}{\int_B \pr(Y>t|X=x)F_X(\dd x)} \\
        &= \frac{\int_A L_{Y|X=x}(t) t^{-1/\gamma_F(x)}F_X(\dd x)}{\int_B L_{Y|X=x}(t) t^{-1/\gamma_F(x)}F_X(\dd x)}\\
        &\leq K_1 \frac{t^{-1/\gamma_F(x)+\epsilon}}{t^{-1/\gamma_F^U-\epsilon}}\\
       & \rightarrow  0
    \end{align*}
    for some $K_1>0$.  Now assume that $B_1\subset B$. We then have
    \begin{align*}
        &\lim_{t\to \infty}\frac{\pr(X\in B_1, Y>t)}{\pr(Y>t)}\\
        &\quad=\lim_{t\to \infty}\frac{\int_{B_1}L_{Y|X=x}(t)t^{-1/\gamma^U_F}F_X(\dd x)}{\int_{\mathcal{X}\setminus B} L_{Y|X=x}(t)t^{-1/\gamma_F(x)}F_X(\dd x)+\int_B L_{Y|X=x}(t)t^{-1/\gamma^U_F}F_X(\dd x)} \\
        &\quad=\lim_{t\to \infty}  \frac{\int_{B_1}L_{Y|X=x}(t)F_X(\dd x)}{\int_B L_{Y|X=x}(t)F_X(\dd x)}
    \end{align*}

    if it exists.
\end{proof}
\newpage

\section{Assumptions verification for simulation study}
\label{sec:Burr}

In this section we check that the Burr model as specified in Section \ref{sec:application} satisfies all the conditions assumed in the various theorems. Note that Condition \ref{cond-full} and \ref{cond:assyp} solely depend on $\varphi$ and the limit distribution. Since they follow easily they are not mentioned in this section. 

We assume that $\kappa(x)\in [\kappa_L,\kappa_U]$ and $\tau(x)\in [\tau_L,\tau_U]$, where $0<\kappa_L<\kappa_U<\infty$ and $0<\tau_L<\tau_U<\infty$. The calculations are equivalent with open intervals instead. 

\begin{itemize}
    \item \textit{Uniform Karamata representation}.\\
    We have that 
    \begin{align*}
    \Bar{F}_{Y|X=x}(y)&=\exp\left(-\kappa(x)\log(1+y^{\tau(x)})\right) \\
    &= \exp(-\log(2)\kappa(x))\exp\left(-\kappa(x)\int_1^y\frac{u^{\tau(x)}\tau(x)}{u(1+u^{\tau(x)})}\dd u\right).
\end{align*}

We have $c(x,y)=\exp(-\log(2)\kappa(x))$ and $\delta(u,x)=\kappa(x)\tau(x)\frac{u^{\tau(x)}}{(1+u^{\tau(x)})}$. $c$ does not depend on $y$, and hence it clearly converges uniformly. Note that 

\begin{align*}
    |\delta(u,x)-\kappa(x)\tau(x)|\leq \left|\frac{u^{\tau_L}}{(1+u^{\tau_L})}\kappa_U\tau_U-\kappa_U\tau_U\right|\to 0
\end{align*}

as $u\to \infty$. Hence it converges uniformly.

\item \textit{Definition \ref{def:uni-U} for $U_{F_{Y|X}}$}. \\
We have that
\begin{align*}
    U_{F_{Y|X=x}}(t)&=(t^{1/\kappa(x)}-1)^{1/\tau(x)} \\
    &=\exp\left(\frac{1}{\tau(x)}\log(t^{1/\kappa(x)}-1)\right) \\
    &=\exp\left(\frac{1}{\tau(x)}(\log(t^{1/\kappa(x)}-1)-\log(t^{1/\kappa(x)}-1))\right)\\
    &\quad\times \exp\left(\frac{1}{\tau(x)}\log(T^{1/\kappa(x)}-1)\right) \\
    &=\exp\left(\frac{1}{\tau(x)}\int_T^t \frac{u^{1/\kappa(x)}}{u\kappa(x)(u^{1/\kappa(x)}-1)}\dd u\right)U_{F_{Y|X=x}}(T).
\end{align*}
Thus $\eta_x(u)=\frac{u^{1/\kappa(x)}}{\kappa(x)\tau(x)(u^{1/\kappa(x)}-1)}$. Note

\begin{align*}
    \left|\eta_x(u)-\frac{u^{1/\kappa(x)}}{\kappa(x)\tau(x)(u^{1/\kappa(x)}-1)}\right|\leq \left| \frac{1}{\kappa_L\tau_L}\frac{u^{1/\kappa_U}}{(u^{1/\kappa_U}-1)}-\frac{1}{\kappa_L\tau_L}\right|.
\end{align*}

Hence it converges uniformly.

\item \textit{First part of Condition \ref{cond:slowly}}. \\
We have that $\eta_x(u)=\frac{u^{1/\kappa(x)}}{\kappa(x)\tau(x)(u^{1/\kappa(x)}-1)}$. Note that $\gamma_F(x)=\lim_{u\rightarrow \infty} \eta_x(u)$ and that $\eta_x(u)$ is increasing in $u$. We have $U_H(x)\geq x^{\gamma_H-\epsilon}$ for large enough $x$, where $\epsilon\in (0,\gamma_H)$ (and likewise eventually we have $U^{\leftarrow}_{Y|X=x_0}(y)\geq y^{1/\gamma_F^L-\epsilon_1}$ uniformly for $x_0$). So

\begin{align*}
    &\log(n/k)|\gamma_{F}(x_1)-\eta_{x_1}(U^{\leftarrow}_{x_0}(n/k)^{\gamma_H-\epsilon})|\\
    &\leq \log(n/k)|\gamma_{F}(x_1)-\eta_{x_1}((n/k)^{\gamma^H/\gamma^U+\epsilon_2})| \\
    &=\log(n/k)\left|\gamma_{F}(x_1)-\frac{(n/k)^{a/\kappa(x_1)}}{\tau(x_1)\kappa(x_1)((n/k)^{a/\kappa(x_1)}-1)}\right| \\
    &=\frac{1}{\tau(x_1)\kappa(x_1)}\log(n/k)\left|1-\left(\frac{(n/k)^{a/\kappa(x_1)}}{((n/k)^{a/\kappa(x_1)}-1)}\right)\right| \\
    & \leq \frac{1}{\tau_L\kappa_L}\log(n/k)\left|1-\left(\frac{(n/k)^{a/\kappa_U}}{((n/k)^{a/\kappa_U}-1)}\right)\right|
\end{align*}
where $a=\gamma^H/\gamma^U+\epsilon_2>0$. Note that
\begin{align*}
    \lim_{y\rightarrow \infty}\log(y)\left(1-\frac{y^b}{y^b-1}\right)&=\lim_{y\rightarrow \infty}\frac{\frac{y^bb}{y^b-1}}{\frac{1}{log(y)^2y}} \\
    &=\lim_{y\rightarrow \infty} \frac{y^b\log(y)^2b}{y^b-1}= 0
\end{align*}
for $b>0$. Hence the condition holds.

\item \textit{Second part of  Condition \ref{cond:slowly}}. \\
We have
\begin{align*}
    \Bar{F}_{Y|X=x}(y)&=\exp\left(-\kappa(x)\log(1+y^{\tau(x)})\right)\\
    &=\exp\left(-\kappa(x)\log(y^{\tau(x)})\right)\exp\left(-\kappa(x)\log\left(\frac{1+y^{\tau(x)}}{y^{\tau(x)}}\right)\right) \\
    &= y^{-1/\gamma_F(x)}\left(\frac{1+y^{\tau(x)}}{y^{\tau(x)}}\right)^{\kappa(x)}.
\end{align*}
So $L_{Y|X=x}(y)=\left(\frac{1+y^{\tau(x)}}{y^{\tau(x)}}\right)^{\kappa(x)}$, and
\begin{align*}
    \frac{L_{Y|X=x^b}(y)}{L_{Y|X=x^a}(y)}=\frac{\left(\frac{1+y^{\tau(x^a)}}{y^{\tau(x^a)}}\right)^{\kappa(x^a)}}{\left(\frac{1+y^{\tau(x^b)}}{y^{\tau(x^b)}}\right)^{\kappa(x^b)}}\leq \frac{\left(\frac{1+y^{\tau_L}}{y^{\tau_L}}\right)^{\kappa_U}}{\left(\frac{1+y^{\tau_U}}{y^{\tau_U}}\right)^{\kappa_L}}\to 1
\end{align*}
for all $x^a,x^b$. Likewise, can a lower limit be established. So the convergence holds uniformly.

\item \textit{Existence of $-u^{-\epsilon}<\delta_{L_{Y|X}}(x,u)<u^{-\epsilon}$}:  \\
We have $L_{Y|X=x}(y)=\left(\frac{1+y^{\tau(x)}}{y^{\tau(x)}}\right)^{\kappa(x)}$, hence we can let $c_{L_{Y|X}}(x,y)=\left(\frac{1+y^{\tau(x)}}{y^{\tau(x)}}\right)^{\kappa(x)}$ and $\delta_{L_{Y|X}}(x,u)=0$. The condition is then fulfilled. 

\item \textit{Condition \ref{cond:second-order}, and $a_{X=x}$ can be uniformly decomposed}. \\
By using Theorem 2.3.4 in \cite{dehaan} we have $a_{X=x}(t)=\rho(x)\left(1-\left(\frac{t^{1/\kappa(x))}}{t^{1/\kappa(x)}-1}\right)^{1/\tau(x)}\right)$, where $\rho(x)=-1/\kappa(x)$. We can rewrite $a_{X=x}$ such that 
\begin{align*}
    a_{X=x}(t)=-\frac{1}{\kappa(x)}t^{-1/\kappa(x)}\frac{t^{1/\kappa(x)}}{t^{1/\kappa(x)}-1}\left( 1+\frac{\frac{1}{1-t^{-1/\kappa(x)}}-\left(\frac{1}{1-t^{-1/\kappa(x)}}\right)^{1/\tau(x)}}{1-\frac{1}{1-t^{-1/\kappa(x)}}}\right).
\end{align*}
With respect to the uniform decomposition, we have that 
\begin{align*}
c(x,y)&=-\frac{1}{\kappa(x)}\frac{y^{1/\kappa(x)}}{y^{1/\kappa(x)}-1}\left( 1+\frac{\frac{1}{1-y^{-1/\kappa(x)}}-\left(\frac{1}{1-y^{-1/\kappa(x)}}\right)^{1/\tau(x)}}{1-\frac{1}{1-y^{-1/\kappa(x)}}}\right)
\end{align*}
This converges uniformly by the same strategy as above. Thus, we  conclude that $a_{X=x}$ can be uniformly decomposed.

Note that 
\begin{align*}
 \frac{U_{F_{Y|X=x}}(ty)/U_{F_{Y|X=x}}(t)-y^{\gamma_F(x)}}{a_{X=x}(t)}=\frac{\frac{((ty)^{1/\kappa(x)}-1)^{-1/\tau(x)}}{(y^{1/\kappa(x)}-1)^{-1/\tau(x)}}-y^{\gamma_F(x)}}{\rho(x)\left(1-\left(\frac{t^{1/\kappa(x)}}{t^{1/\kappa(x)}-1}\right)^{1/\tau(x)}\right)}.
\end{align*}
which with a similar approach can be shown to converge uniformly. 

\end{itemize}

\end{document}